\definecolor{darkgreen}{rgb}{0.1,0.7,0.1}
\definecolor{darkred}{rgb}{0.7,0.1,0.1}
\newcommand{\mat}[4]{\left( \begin{array}{cc}
#1 & #2  \\
#3 & #4  \\
\end{array} \right)}
\newtheorem{theorem}{Theorem}
\newtheorem{lemma}{Lemma}[section]
\newtheorem{proposition}[lemma]{Proposition}
\newtheorem{corollary}[lemma]{Corollary}
\newtheorem{remark}[lemma]{Remark}
\newtheorem*{theorem*}{Theorem}
\newcommand{\arccotan}{\mathrm{arccotan\,}}
\newcommand{\eps}{\varepsilon}
\newcommand\symb[2][\bf]{{\mathchoice{\hbox{#1#2}}{\hbox{#1#2}}%
        {\hbox{\scriptsize#1#2}}{\hbox{\tiny#1#2}}}}
\def\R{{\symb R}}
\def\Z{{\symb Z}}
\def\C{{\symb C}}
\def\P{{\symb P}}
\def\Sch{\mbox{Sch}}
\def\tSch{\mathtt{CS}}
\def\tSchE{\cS^{(E)}_L}
\def\un{\mathbf{1}}
\renewcommand{\P}{\mathbb{P}}
\newcommand{\E}{\mathbb{E}}
\newcommand{\bbQ}{\mathbb{Q}}
\newcommand{\cB}{\mathcal{B}}
\newcommand{\cC}{\mathcal{C}}
\newcommand{\cD}{\mathcal{D}}
\newcommand{\cE}{\mathcal{E}}
\newcommand{\cF}{\mathcal{F}}
\newcommand{\cH}{\mathcal{H}}
\newcommand{\cN}{\mathcal{N}}
\newcommand{\cP}{\mathcal{P}}
\newcommand{\cS}{\mathcal{S}}
\newcommand{\cT}{\mathcal{T}}
\newcommand{\cW}{\mathcal{W}}
\newcommand{\gl}{\lambda}
\newcommand{\bs}[1]{\boldsymbol{#1}}
\newcommand{\HH}{\bs{\mathcal{H}}}
\begin{document}

\title[Delocalized phase of the Anderson Hamiltonian]{The delocalized phase\\of the Anderson Hamiltonian in $1$-d}

\author{Laure Dumaz}
\address{
CNRS \& Department of Mathematics and Applications, \'Ecole Normale Sup\'erieure (Paris), 45 rue d’Ulm, 75005 Paris, France}
\email{laure.dumaz@ens.fr}

\author{Cyril Labb\'e}
\address{
Universit\'e Paris-Dauphine, PSL University, UMR 7534, CNRS, CEREMADE, 75016 Paris, France\\
\& \'Ecole Normale Sup\'erieure, UMR 8553, CNRS, DMA, 75005 Paris, France}
\email{labbe@ceremade.dauphine.fr}

\vspace{2mm}

\date{\today}

\maketitle

\begin{abstract}

We introduce a random differential operator, that we call $\tSch_\tau$ operator, whose spectrum is given by the $\Sch_\tau$ point process introduced by Kritchevski, Valk\'o and Vir\'ag~\cite{KVV} and whose eigenvectors match with the description provided by Rifkind and Vir\'ag in~\cite{RV}. This operator acts on $\R^2$-valued functions from the interval $[0,1]$ and takes the form:
\begin{align*}
2 \mat{0}{-\partial_t}{\partial_t}{0} + \sqrt{\tau} \begin{pmatrix} d\cB + \frac1{\sqrt 2} d\cW_1 & \frac1{\sqrt 2} d\cW_2\\ \frac1{\sqrt 2} d\cW_2 & d\cB - \frac1{\sqrt 2} d\cW_1\end{pmatrix}\,,
\end{align*}
where $d\cB$, $d\cW_1$ and $d\cW_2$ are independent white noises.
Then, we investigate the high part of the spectrum of the Anderson Hamiltonian $\mathcal{H}_L := -\partial_t^2 + dB$ on the segment $[0,L]$ with white noise potential $dB$, when $L\to\infty$. We show that the operator $\cH_L$, recentred around energy levels $E \sim L/\tau$ and unitarily transformed, converges in law as $L\to\infty$ to $\tSch_\tau$ in an appropriate sense. This allows to answer a conjecture of Rifkind and Vir\'ag~\cite{RV} on the behavior of the eigenvectors of $\cH_L$. Our approach also explains how such an operator arises in the limit of $\cH_L$. Finally we show that at higher energy levels, the Anderson Hamiltonian matches (asymptotically in $L$) with the unperturbed Laplacian $-\partial_t^2$. In our companion paper~\cite{DL3} it is shown that at energy levels much smaller than $L$, the spectrum is localized with Poisson statistics: the present paper therefore identifies the delocalized phase of the Anderson Hamiltonian.
\medskip

\noindent
{\bf AMS 2010 subject classifications}: Primary 60H25, 60J60; Secondary 	60B20. \\
\noindent
{\bf Keywords}: {\it Anderson Hamiltonian; Hill's operator; Canonical systems; Dirac operator; Delocalization; Strong resolvent convergence; Diffusion; Sch.}
\end{abstract}

\setcounter{tocdepth}{1}
\tableofcontents

\section{Introduction}

The original motivation of the present article was to study the asymptotic behavior as $L\to\infty$ of the high part of the spectrum of the Anderson Hamiltonian
$$ \cH_L := -\partial_t^2 + dB\;,\quad t\in(0,L)\;,$$
endowed with Dirichlet b.c., where $B$ is standard Brownian motion. It is known~\cite{Fukushima} that $\cH_L$ is a self-adjoint operator with discrete spectrum, bounded below and of multiplicity one. In the sequel we denote by $(\lambda_k)_{k\ge 1}$ the increasing sequence of eigenvalues of $\cH_L$ and by $(\varphi_k)_{k\ge 1}$ the associated sequence of eigenvectors normalized in $L^2(0,L)$.\\

By high part, we mean eigenvalues of order $L$ or more. More precisely, we aimed at understanding the local statistics of $\cH_L$ near some energy level $E=E(L)$ that goes to $\infty$ with at least linear speed in $L$. This question falls in the topic of Anderson localization for Schr\"odinger operators: it is expected that, the higher $E$ lies within the spectrum the less localized the eigenvectors are. In the companion papers~\cite{DL1,DL3}, we showed that around energies $E \ll L$, the local statistics of the eigenvalues converge to a Poisson point process and the eigenvectors are localized.\\

In the present article, we will show that the rest of the spectrum is delocalized. However two distinct regimes will arise:\begin{enumerate}
\item Critical: $E \sim L/\tau$ for some $\tau \in (0,\infty)$,
\item Top: $E \gg L$.
\end{enumerate}
The most interesting case will be the Critical regime, so we now focus on it. At the end of the introduction, we will present our results in the other regime.

\bigskip

There is a formal analogy between our Critical regime, and the behavior of critical models of $N\times N$ tridiagonal matrices of the form ``discrete Laplacian + diagonal noise'' as studied by Kritchevski, Valk\'o and Vir\'ag in~\cite{KVV}. It was shown therein that the spectrum of these models, after appropriate recentring around some energy level that depends on a parameter $\tau \in (0,\infty)$, converges in law as $N\to\infty$ to a random point process that they called $\Sch_\tau$. The denomination ``critical'' for the underlying matrix models comes from the following fact: the family of point processes $\Sch_\tau$ that arises in the limit interpolates between the Poisson point process ($\tau =\infty$, see \cite{AllezDumazSine} for a proof for a related model) and the picket fence $2\pi \Z$ ($\tau=0$).\\

The point process $\Sch_\tau$, introduced in \cite{KVV}, admits a nice characterization in terms of a coupled system of SDEs. Let $\cB$ and $\cW$ be independent real and complex\footnote{By complex Brownian motion, we mean that the real and imaginary parts are independent standard Brownian motions.} Brownian motions, and consider
\begin{align}
d\Theta_\gl(t) = \frac{\gl}{2} dt + \frac{\sqrt{\tau}}{2} d\cB(t) + \frac{\sqrt{\tau}}{2\sqrt 2} \Re(e^{2i \Theta_\gl(t)} d\cW(t))\;, t\in [0,1]\;, \gl \in \R\;, \label{defThetagl}
\end{align}
where $\Theta_\gl(0):=0$ (note that this family $(\Theta_\gl)_\gl$ depends on $\tau$). The point process $\Sch_\tau$ can be defined as
\begin{align}
\Sch_{\tau} := \{ \lambda \in \R: \Theta_{\gl}(1) \in \pi \Z\}\;.\label{Characdiffusion:Schtau}
\end{align}
\medskip

Our first result shows not only convergence of the local statistics of $\cH_L$ near $E\sim L/\tau$ towards $\Sch_\tau$, but also the joint convergence of the eigenvalues and eigenvectors (rescaled into probability measures) to an explicit $2d$-point process whose first component is $\Sch_\tau$. To state the result, let us introduce
\begin{align}
 d \rho_{\lambda}(t) = \frac{\tau}{8} dt + \frac{\sqrt{\tau}}{2\sqrt 2} \Im(e^{2i \Theta_\gl(t)} d\cW(t))\;,\quad t\in [0,1]\;,\quad \lambda \in \R\;, \label{defrhogl}
\end{align}
with $\rho_\lambda(0) = 0$. Let us also set $\ell_E := \{L\sqrt E\}_\pi$ where $\{x\}_\pi$ is the unique value in $[0,\pi)$ such that $x= \{x\}_\pi$ modulo $\pi$.

\begin{theorem}\label{Th:Joint1}
Fix $\tau > 0$ and assume that $E=E(L) \sim L/\tau$. As $L\to\infty$, the random point process
\begin{align*}
\Big\{\Big((L/\sqrt{E})(\lambda - E) + 2\ell_E, \; L \,\varphi^2_\gl(L t) dt\Big), \;\gl \mbox{ is an eigenvalue of }\mathcal{H}_L \Big\}
\end{align*}
converges in law to
\begin{align*}
\Big\{\Big(\gl, \frac{e^{2\rho_{\gl}(t)} dt}{\int_{[0,1]} e^{2\rho_{\gl}(t)} dt}  \Big), \;\gl \in \Sch_\tau \Big\}\;.
\end{align*}
\end{theorem}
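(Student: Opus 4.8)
\emph{Overall strategy.} The plan is to transfer the Dirichlet spectral problem for $\cH_L$ to a Pr\"ufer-type diffusion on $[0,L]$, rescale it to the unit interval while recentring the spectral parameter around $E$, prove a stochastic-averaging limit whose output is the coupled system \eqref{defThetagl}--\eqref{defrhogl}, and finally read off the point process and the eigenvector measures by Sturm oscillation theory and weak convergence of the fast phase. Concretely, for $\gl\in\R$ let $u_\gl$ solve $-u''+\dot B\,u=\gl u$ with $u_\gl(0)=0$, $u_\gl'(0)=1$; then $\gl$ is an eigenvalue iff $u_\gl(L)=0$, and $\#\{\text{eigenvalues}\le\gl\}$ equals the number of zeros of $u_\gl$ in $(0,L)$. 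Writing $u_\gl=\rho_\gl\sin\phi_\gl$, $u_\gl'=\sqrt\gl\,\rho_\gl\cos\phi_\gl$ with $\phi_\gl(0)=0$, and reading the equation as an It\^o system for $(u_\gl,u_\gl')$, the pair $(\phi_\gl,\log\rho_\gl)$ solves a closed SDE of the form
\begin{align*}
d\phi_\gl &= \sqrt\gl\,dt-\frac{\sin^2\phi_\gl}{\sqrt\gl}\,dB+\frac{g_1(\phi_\gl)}{\gl}\,dt\,,\\
d\log\rho_\gl &= \frac{\sin2\phi_\gl}{2\sqrt\gl}\,dB+\Big(\frac1{8\gl}+\frac{g_2(\phi_\gl)}{\gl}\Big)\,dt\,,
\end{align*}
where $g_1,g_2$ are mean-zero trigonometric polynomials (the It\^o corrections produced by the second-order nature of $\cH_L$); the constant $1/(8\gl)$ is the future source of the drift $\tfrac\tau8\,dt$ in \eqref{defrhogl}, and $u_\gl(L)=0$ iff $\phi_\gl(L)\in\pi\Z$, with $\gl\mapsto\phi_\gl(L)$ non-decreasing.

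\emph{Rescaling.} Next I would set $\gl=\gl_L(\mu):=E+\tfrac{\sqrt E}{L}(\mu-2\ell_E)$ and, with $m_L:=\lfloor L\sqrt E/\pi\rfloor$ (so $\pi m_L=L\sqrt E-\ell_E$), introduce $\Theta^L_\mu(s):=\phi_{\gl_L(\mu)}(Ls)-\pi m_L s$ and $\varrho^L_\mu(s):=\log\rho_{\gl_L(\mu)}(Ls)$ on $s\in[0,1]$, so that $\Theta^L_\mu(0)=0$ and $\mu$ is a recentred eigenvalue iff $\Theta^L_\mu(1)\in\pi\Z$. With $\tilde B(s):=L^{-1/2}B(Ls)$ a standard Brownian motion, $\sin^2\phi_\gl=\tfrac12-\tfrac12\Re e^{2i\phi_\gl}$, and the splitting $e^{2i\phi_{\gl_L(\mu)}(Ls)}=e^{2i\Theta^L_\mu(s)}e^{2\pi i m_L s}$, the pair $(\Theta^L_\mu,\varrho^L_\mu)$ satisfies an SDE in $s$ whose drift is $\tfrac\mu2\,ds+o(1)$ for $\Theta^L_\mu$ (the $\ell_E$ built into $\gl_L(\mu)$ cancels the $\ell_E$ hidden in $\pi m_L$, which is the reason for the term $2\ell_E$ in the statement), whose martingale part is $\tfrac{\sqrt L}{\sqrt{\gl}}$ — which tends to $\sqrt\tau$ since $E\sim L/\tau$ — times $\tfrac12-\tfrac12\Re(e^{2i\Theta^L_\mu}e^{2\pi i m_L s})$ integrated against $d\tilde B$, and for which $\tfrac{L}{8\gl}\,ds\to\tfrac\tau8\,ds$ while the oscillatory $O(1/\gl)$ drifts vanish in the limit (mean zero and size $O(L^{-3/2})$ after integration).

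\emph{Diffusion approximation (the crux).} The heart of the proof is to show that, jointly over $\mu$ in compacts and as continuous processes on $[0,1]$, $(\Theta^L_\mu,\varrho^L_\mu)_\mu$ converges in law to the solution $(\Theta_\gl,\rho_\gl)_\gl$ of \eqref{defThetagl}--\eqref{defrhogl}. The mechanism is that the fast factor $e^{2\pi i m_L s}d\tilde B(s)$ converges in law (as $m_L\to\infty$) to $\tfrac1{\sqrt2}d\cW(s)$ for a complex Brownian motion $\cW$ asymptotically independent of $\tilde B=:\cB$, so the non-oscillatory part of the martingale produces $\tfrac{\sqrt\tau}2\,d\cB$ and the oscillatory part produces $\tfrac{\sqrt\tau}{2\sqrt2}\Re(e^{2i\Theta_\gl}\,d\cW)$ — matching \eqref{defThetagl} — and the $\Im$-analogue together with the surviving $\tfrac\tau8$ drift matches \eqref{defrhogl}. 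Crucially the fast factor $e^{2\pi i m_L s}$, hence the limiting $\cW$, does not depend on $\mu$ (only on $E$): this is exactly why all $\Theta_\gl,\rho_\gl$ in the limit are driven by the single pair $(\cB,\cW)$, the parameter entering only through the drift $\tfrac\gl2$ and the slow rotation $e^{2i\Theta_\gl}$. Rigorously one needs tightness (moment bounds on $\Theta^L_\mu,\varrho^L_\mu$ uniform in $\mu$, via the explicit SDEs and Gronwall, including an a priori bound preventing the amplitude from exploding) and identification of the limit through a martingale problem for $(\cB,\cW)$ — equivalently by testing the prelimit martingales against the slow modulations to extract $\cW$ — using a quantitative oscillatory-integral/ergodic estimate for the fast factor, uniform over the adapted slow phase. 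This follows the scheme of \cite{KVV} for the tridiagonal matrix model, with the discrete-to-continuum step there replaced by the slow--fast limit of the white-noise potential; making this joint-in-$\mu$ diffusion approximation rigorous, with the unbounded amplitude and the It\^o corrections under control, is the main obstacle.

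\emph{Point process and eigenvectors.} Granting the previous step, the recentred eigenvalues of $\cH_L$ are $\{\mu:\Theta^L_\mu(1)\in\pi\Z\}$, and since $\mu\mapsto\Theta^L_\mu(1)$ is non-decreasing the associated counting function is monotone, so convergence of the processes upgrades to convergence in law of the point process to $\{\gl:\Theta_\gl(1)\in\pi\Z\}=\Sch_\tau$ — using (as in \cite{KVV}) that $\gl\mapsto\Theta_\gl(1)$ is a.s. strictly increasing and a.s. crosses $\pi\Z$ transversally, so that $\Sch_\tau$ is a simple point process and a continuous functional of $(\cB,\cW)$. For the eigenvector attached to an eigenvalue $\gl\leftrightarrow\mu$,
\[
L\,\varphi_\gl^2(Ls)\,ds=\frac{e^{2\varrho^L_\mu(s)}\,\sin^2\phi_{\gl}(Ls)\,ds}{\int_0^1 e^{2\varrho^L_\mu(r)}\,\sin^2\phi_{\gl}(Lr)\,dr}\,,
\]
and since $e^{2\varrho^L_\mu}\to e^{2\rho_\gl}$ uniformly on $[0,1]$ while in the limit the fast phase $\phi_\gl(L\cdot)$ equidistributes modulo $\pi$ and decouples from $(\Theta_\gl,\rho_\gl)$ — so $\sin^2\phi_\gl(L\cdot)$ averages weakly to $\tfrac12$ — the measure converges to $e^{2\rho_\gl(s)}\,ds/\int_0^1 e^{2\rho_\gl(r)}\,dr$, jointly with $\mu$. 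Assembling these facts through a continuous-mapping argument at the a.s.\ continuity points of the limiting functionals yields Theorem \ref{Th:Joint1}. (An essentially equivalent route, matching the abstract, is to first establish strong resolvent convergence of the unitarily transformed, recentred operators $\cH_L$ to the operator $\tSch_\tau$ displayed at the start and then deduce the theorem from the spectral description of $\tSch_\tau$; the analytic core — the slow--fast homogenisation above — is the same either way.)
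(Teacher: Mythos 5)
Your proposal is correct and follows essentially the same route as the paper: recentred Pr\"ufer coordinates for the phase and log-amplitude, tightness via Gronwall/BDG, identification of the slow--fast limit through a martingale problem in which the oscillatory part of the noise produces the extra complex Brownian motion $\cW$ (the ``noise explosion''), monotonicity of $\gl\mapsto\Theta^L_\gl(1)$ to pass from process convergence to point-process convergence, and a Riemann--Lebesgue averaging of $\sin^2\phi_\gl(L\cdot)$ to $\tfrac12$ for the eigenvector measures. The only (cosmetic) difference is that you work directly with the scalar Pr\"ufer system for $u_\gl$ instead of the paper's $\R^2$-lift and unrotation by $C_{E'}$ --- an equivalence the paper itself notes --- and you are, if anything, more explicit than the paper about the $\sin^2$-averaging step needed to pass from $|\Psi^{(E)}_\gl|^2$ to $L\varphi^2_\gl(L\cdot)$.
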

\noindent In this statement, point processes are seen as elements of the set of locally finite measures on $\R\times \cP([0,1])$ endowed with the vague topology, that is, the topology that makes continuous $\mu \mapsto \langle \mu,f\rangle$ for all bounded and continuous maps $f:\R\times\cP([0,1])\to\R$ that are compactly supported in their first coordinate.

\bigskip

At this point, two natural questions arise:\begin{enumerate}
\item Does there exist a self-adjoint operator whose spectrum is given by $\Sch_\tau$ and whose eigenvectors are (related to) the processes $e^{2\rho_{\gl}}$ ?
\item Does the above convergence hold at the level of operators ?
\end{enumerate}
\bigskip

We positively answer to both questions. Let us mention here that Valk\'o and Vir\'ag have constructed in~\cite{VVoperator} a general framework of random operators, corresponding to the limit of several famous random matrices models and notably the bulk, soft and hard edges of $\beta$-ensembles. In particular, they have introduced an operator whose spectrum corresponds to the translation invariant version of $\Sch_\tau$. Those operators are defined through the theory of \emph{canonical systems} of De Branges \cite{DeBranges}.

In the present paper, we also rely on the theory of canonical systems to construct an operator whose spectrum is given by $\Sch_\tau$. However, our operator takes a different form from those appearing in~\cite{VVoperator}. This is due to our additional constraint on the eigenvectors (they must be related to $e^{2\rho_\gl}$) that is absent in~\cite{VVoperator}: recall that we impose ourselves this additional constraint in order for our operator to arise as a scaling limit of $\cH_L$. We refer to Remark \ref{Rk:VV} for more comments on the difference with~\cite{VVoperator}. We also believe that our approach sheds some light on the class of operators that should appear as scaling limits of tridiagonal matrices or continuum random Schr\"odinger operators.


\subsection*{The Critical Schr\"odinger operator $\tSch_\tau$}

Let us start by introducing the limiting operator and its first properties. We (formally) define the following operator on $L^2([0,1],\R^2)$, that we call $\tSch_\tau$ for Critical Schr\"odinger:
\begin{equation}
\tSch_\tau := 2 \begin{pmatrix}
0 & -\partial_t\\
\partial_t & 0
\end{pmatrix}+ \sqrt{\tau} \begin{pmatrix} d\cB + \frac1{\sqrt 2} d\cW_1 & \frac1{\sqrt 2} d\cW_2\\ \frac1{\sqrt 2} d\cW_2 & d\cB - \frac1{\sqrt 2} d\cW_1\end{pmatrix}\;, \label{DefopSch}
\end{equation}
where $\cB,\cW_1,\cW_2$ are independent Brownian motions. The precise definition will be given in Section \ref{sec:limitingoperator}, let us only mention that we endow this operator with Dirichlet b.c., that is, any function $f\in L^2([0,1],\R^2)$ lying in the domain is such that $f(0)$ and $f(1)$ are parallel to $(0,1)^\intercal$, where we denote by $M^\intercal$ the transpose of any matrix $M$. Let us emphasize that the operator $\tSch_\tau$ acts on a space of $\R^2$-valued functions while our initial operator $\cH_L$ acts on $\R$-valued functions: the reason for the ``enlargement'' of the underlying space will appear below. Note also that $\tSch_\tau$ is of the form
\begin{align*}
 2 \mat{0}{-\partial_t}{\partial_t}{0} + \mbox{ ``noise matrix''}\,,
\end{align*}
a form that was conjectured by Edelman-Sutton~\cite{EdelmanSutton} for the limit of certain tridiagonal ensembles.

\begin{theorem}\label{Th:Sch}
The operator $\tSch_\tau$ is self-adjoint with discrete spectrum. Its collection of eigenvalues and normalized eigenvectors coincides in law with
$$\Big\{ (\lambda, \Psi_{\lambda}) : \Theta_{\lambda}(1) \in \pi \Z \Big\}\;,$$
where $\Psi_\lambda :=  \frac{e^{\rho_{\lambda}}}{\| e^{\rho_\lambda} \|_{L^2([0,1])}} \begin{pmatrix} \sin \Theta_\lambda \\ \cos \Theta_\lambda \end{pmatrix}$, where $\Theta_\gl$ and $\rho_\gl$ follow the diffusions \eqref{defThetagl} and \eqref{defrhogl}.
\end{theorem}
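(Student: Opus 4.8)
The plan is to exploit the theory of canonical systems (Dirac-type operators) to identify $\tSch_\tau$ with a self-adjoint operator whose spectral data is read off directly from a transfer matrix flow, and then to check that this flow is conjugate to the diffusion system \eqref{defThetagl}--\eqref{defrhogl}. First I would give the rigorous definition of $\tSch_\tau$ promised in Section \ref{sec:limitingoperator}: since the ``noise matrix'' is only a distribution, the operator should be understood via its associated bilinear form, or equivalently by rewriting the eigenvalue equation $\tSch_\tau f = \lambda f$ in integrated (mild) form. Concretely, writing $f = (f_1,f_2)^\intercal$, the eigenvalue problem becomes a linear SDE: $2\,df_2 = \lambda f_1\,dt - \sqrt{\tau}\,(\text{noise})\,f\,dt$ and similarly for $f_1$, which one makes sense of as a system driven by $d\cB, d\cW_1, d\cW_2$ in the Stratonovich (or here equivalently Itô, since the noise enters multiplicatively against a process of bounded variation in the relevant combination) sense. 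The key structural point is that the $2\times 2$ antisymmetric leading term $2\binom{\ \ 0\ -\partial_t}{\partial_t\ \ \ 0}$ together with the \emph{symmetric} noise matrix puts the system into canonical/Dirac form $J f' = -(\lambda \mathcal{I} + \text{potential})f$ after a constant rotation, so that De Branges' theory (as used in \cite{VVoperator}) applies: the operator is self-adjoint on the domain cut out by the Dirichlet b.c. $f(0), f(1) \parallel (0,1)^\intercal$, its spectrum is discrete (the ``length'' is finite, $t\in[0,1]$, and the trace of the structure function is integrable), simple, and $\lambda$ is an eigenvalue iff the solution started from $(0,1)^\intercal$ satisfies the boundary condition at $t=1$.

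Next I would solve the eigenvalue SDE explicitly in polar-type coordinates. Starting a solution $f_\lambda$ at $t=0$ from the admissible vector $(0,1)^\intercal$, write $f_\lambda(t) = r_\lambda(t)\,(\sin\phi_\lambda(t), \cos\phi_\lambda(t))^\intercal$, so that $\phi_\lambda(0) = 0$. Applying Itô's formula and carefully tracking the quadratic variation terms produced by $d\cB, d\cW_1, d\cW_2$, the angle $\phi_\lambda$ should satisfy exactly the SDE \eqref{defThetagl} for $\Theta_\lambda$ (this is where the precise normalizations $\sqrt{\tau}/2$, $\sqrt{\tau}/(2\sqrt2)$ and the $e^{2i\Theta}$ modulation come from: the combination $\frac1{\sqrt2} d\cW_1 \cos 2\phi + \frac1{\sqrt2} d\cW_2 \sin 2\phi$ is a real Brownian increment of the right variance, and it recombines with $d\cB$ precisely into the $\Re(e^{2i\Theta} d\cW)$ term after relabeling $\cW$ as the appropriate complex Brownian motion built from $\cW_1,\cW_2$). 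Simultaneously the log-radius $\log r_\lambda$ should satisfy the SDE \eqref{defrhogl} for $\rho_\lambda$, the drift $\frac{\tau}{8}\,dt$ being the Itô correction. Hence $f_\lambda(t) = e^{\rho_\lambda(t)}(\sin\Theta_\lambda(t),\cos\Theta_\lambda(t))^\intercal$ up to equality in law of the driving noises.

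Finally I would read off the conclusion. The Dirichlet condition at $t=1$ requires $f_\lambda(1) \parallel (0,1)^\intercal$, i.e.\ $\sin\Theta_\lambda(1) = 0$, i.e.\ $\Theta_\lambda(1)\in\pi\Z$; this is exactly the defining relation \eqref{Characdiffusion:Schtau} of $\Sch_\tau$, giving the eigenvalue set. For such $\lambda$, the corresponding (unnormalized) eigenvector is $e^{\rho_\lambda(t)}(\sin\Theta_\lambda(t),\cos\Theta_\lambda(t))^\intercal$, whose $L^2([0,1],\R^2)$-norm is $\|e^{\rho_\lambda}\|_{L^2([0,1])}$ since $\sin^2+\cos^2=1$; normalizing yields exactly $\Psi_\lambda$. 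One should also note that monotonicity of $\lambda\mapsto\Theta_\lambda(1)$ (which follows from the $\frac{\lambda}{2}dt$ drift dominating, as in \cite{KVV}) confirms simplicity and that the labelling matches. The main obstacle I anticipate is not the polar-coordinate computation but the \emph{rigorous definition and self-adjointness}: one must show the distributional operator \eqref{DefopSch} genuinely defines a self-adjoint operator with the claimed domain — i.e.\ that the heuristic Dirac/canonical-system reformulation is legitimate despite the white-noise coefficients — and that its spectral data is really given by the transfer flow. This requires either a form-domain argument with a suitable Sobolev-type space adapted to the rough potential, or an approximation by mollified operators together with a (strong) resolvent convergence argument; the discreteness of the spectrum then follows from compactness of the resolvent on $[0,1]$.
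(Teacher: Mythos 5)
Your second and third steps (the Pr\"ufer/polar computation, the identification of the angle and log-radius SDEs with \eqref{defThetagl} and \eqref{defrhogl}, the observation that $\sin^2+\cos^2=1$ makes the $L^2([0,1],\R^2)$-norm of the unnormalized eigenvector equal to $\|e^{\rho_\lambda}\|_{L^2([0,1])}$, and the reading-off of the Dirichlet condition at $t=1$ as $\Theta_\lambda(1)\in\pi\Z$) are correct and coincide with what the paper does. The gap is in the first step, which you yourself flag as the main obstacle but do not resolve: the rigorous construction and self-adjointness. Your claim that the system is put ``into canonical/Dirac form $Jf'=-(\lambda\mathcal{I}+\text{potential})f$ after a constant rotation, so that De Branges' theory applies'' does not work as stated. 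The theory of canonical systems requires the coefficient matrix in $Jv'=zR(t)v$ to be an integrable \emph{function}; a Dirac system with an $L^1_{\mathrm{loc}}$ potential can be absorbed into that framework, but here the potential is white noise, and no constant rotation (nor any pointwise algebraic manipulation) removes it. Your fallback suggestions (form methods on a Sobolev-type space, or mollification plus resolvent convergence) are plausible programs but are not carried out, and each would require substantial additional work.

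The paper's resolution is a specific conjugation trick that your proposal is missing. One introduces the matrix-valued process $M$ solving $dM=J\,dV\,M$, $M(0)=I$ (here $\det M\equiv 1$), and observes that for any solution $u_z$ of the formal eigenvalue SDE \eqref{eqdiffhgl} the transformed process $v_z=M^{-1}u_z$ is \emph{differentiable}: the It\^o bracket cancels the noise, leaving the genuine canonical system $J v_z' = \frac{z}{\det M} M^\intercal M v_z$ as in \eqref{cansystdirac}. With $R:=M^\intercal M/\det M$ continuous and positive definite, $\cT_R:=R^{-1}J\partial_t$ is self-adjoint on $L^2_R$ with Hilbert--Schmidt resolvent (hence discrete spectrum), and $\tSch_\tau:=2M\cT_R M^{-1}$ is self-adjoint because $f\mapsto Mf$ is unitary from $L^2_R$ onto $L^2_{(\det M)^{-1}I}$. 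This is also what justifies your transfer-flow characterization of the spectrum (the paper's Lemma \ref{Lemma:EigenSDEs}): without it, the assertion that ``$\lambda$ is an eigenvalue iff the solution started from $(0,1)^\intercal$ satisfies the boundary condition at $t=1$'' has no operator to refer to. Note also that the domain elements of $\tSch_\tau$ are not adapted to the noise, so the It\^o/Stratonovich interpretation you invoke is only available for the eigenfunction SDE itself, not for the action of the operator on its whole domain; this is precisely why the quotient through a canonical system is needed.
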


\medskip

Before we address our second question, let us compute the intensity measure of the point process of eigenvalues / eigenvectors, that is, the measure on $\R\times\cC([0,1],\R^2)$ defined by
$$ \mu_\tau(A) := \E\Big[\sum_{\gl: \Theta_{\gl}(1) \in \pi \Z} \un_A\big(\gl,\Psi_{\gl}(\cdot)\big)\Big]\;.$$

Let $g_{m,\sigma^2}$ be the density of the real gaussian law $\cN(m,\sigma^2)$.

\begin{theorem}\label{Th:Intensity}
For any non-negative measurable map $G$ on $\R\times \cC([0,1],\R^2)$ we have
\begin{align*}
\int G(\gl, \Psi) d\mu_\tau(\gl, \Psi)= \int \sum_{n \in \Z} \E\Big[G\big(\gl, X^{(n)}\big)\Big] g_{\lambda, \frac32 \tau}(2n\pi) d\gl\;,
\end{align*}
where
$$ X^{(n)}(t) = \frac{e^{\frac{\sqrt \tau}{2\sqrt{2}}\mathcal{B}_1(t-U) - \frac{\tau}8 |t-U| }}{\big\| e^{\frac{\sqrt \tau}{2\sqrt{2}}\mathcal{B}_1(\cdot-U) - \frac{\tau}8 |\cdot - U| } \big\|_{L^2([0,1])}}\begin{pmatrix} \sin \beta^{(n\pi)} (t) \\\cos \beta^{(n\pi)} (t) \end{pmatrix}\;,\quad t\in [0,1]\;,$$
and\begin{itemize}
\item $\mathcal{B}_1$ is a two-sided, real Brownian motion, 
\item $\beta^{(n\pi)}$ is a scaled Brownian bridge between $(0,0)$ and $(1,n\pi)$:
$$ \beta^{(n\pi)}(t)= \frac{1}{2} \sqrt{\frac{3\tau}{2}}(\mathcal{B}_2(t) - t\mathcal{B}_2(1)) + n \pi t \;,\quad t\in [0,1]\;,$$
associated to an independent Brownian motion $\cB_2$,
\item $U$ is a uniform random variable on $[0,1]$ independent of $\mathcal{B}_1$ and $\mathcal{B}_2$.
\end{itemize}
\end{theorem}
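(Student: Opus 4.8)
The plan is to compute the intensity measure directly from the explicit description of the eigenvalue/eigenvector process furnished by Theorem~\ref{Th:Sch}, namely as the image under $\gl \mapsto (\gl,\Psi_\gl)$ of the ``zero set'' $\{\gl : \Theta_\gl(1)\in\pi\Z\}$. The first step is a Kac--Rice / co-area type argument: the random set $\{\gl:\Theta_\gl(1)=n\pi\}$ counted against a test function $H(\gl)$ has expectation $\int H(\gl)\,\E[\,|\partial_\gl\Theta_\gl(1)|\,\delta_0(\Theta_\gl(1)-n\pi)\,]\,d\gl$, which formally is $\int H(\gl)\,\E[\,|\partial_\gl\Theta_\gl(1)|\,\big|\,\Theta_\gl(1)=n\pi\,]\,p_{\Theta_\gl(1)}(n\pi)\,d\gl$, where $p_{\Theta_\gl(1)}$ is the density of $\Theta_\gl(1)$. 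So I would first identify the law of $\Theta_\gl(1)$ and show it is Gaussian with mean $\gl/2$ and variance $3\tau/8$; this comes from the fact that, from \eqref{defThetagl}, $\Theta_\gl(t)-\tfrac{\gl}{2}t$ is a continuous martingale whose quadratic variation is deterministic: the cross-terms $d\cB$ and $\Re(e^{2i\Theta_\gl}d\cW)$ are orthogonal and each contributes a constant rate ($\tau/4$ and $\tau/8$ respectively, summing to $3\tau/8$), so by L\'evy's characterization $\Theta_\gl(1)-\gl/2 \sim \cN(0,3\tau/8)$. Evaluating the Gaussian density at $n\pi$ gives the factor $g_{\gl,\frac32\tau}(2n\pi)$ after rescaling by the factor $2$ implicit in the ``$2\ell_E$'' normalization — indeed $g_{\lambda,\frac32\tau}(2n\pi)$ is, up to the Jacobian $2$, the density of $2\Theta_\gl(1)$ at $2n\pi$, which matches writing the level set as $2\Theta_\gl(1)\in 2\pi\Z$.

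The heart of the argument is then to understand the law of the whole path $(\Theta_\gl(t),\rho_\gl(t))_{t\in[0,1]}$ conditioned on $\{\Theta_\gl(1)=n\pi\}$, weighted by $|\partial_\gl\Theta_\gl(1)|$. The key observation is that the pair $(\Theta_\gl,\rho_\gl)$ driven by $(\cB,\cW)$ can be rewritten using a time-change / rotation of the driving noise: because the quadratic variation of $\Theta_\gl$ is deterministic, one can perform a (deterministic-rate) Girsanov-free change of Brownian motions $\cW\mapsto e^{-2i\Theta_\gl}\cW$ (a ``rotated'' noise, still a complex Brownian motion since $\Theta_\gl$ is adapted and the rotation is an isometry of the increments), under which $\Theta_\gl$ becomes an explicit Gaussian process — a Brownian motion with drift plus an independent Brownian-type term — and, crucially, $\rho_\gl$ decouples into the sum of an independent one-dimensional piece. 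Concretely: writing $d\cW = e^{2i\Theta_\gl}(d\widetilde\cW_1 + i\,d\widetilde\cW_2)$ with $\widetilde\cW_1,\widetilde\cW_2$ independent real Brownian motions independent of $\cB$, \eqref{defThetagl}--\eqref{defrhogl} become $d\Theta_\gl = \tfrac{\gl}{2}dt + \tfrac{\sqrt\tau}{2}d\cB + \tfrac{\sqrt\tau}{2\sqrt2}d\widetilde\cW_1$ and $d\rho_\gl = \tfrac{\tau}{8}dt + \tfrac{\sqrt\tau}{2\sqrt2}d\widetilde\cW_2$. Thus $\rho_\gl$ is \emph{independent} of $\Theta_\gl$, and $\rho_\gl(t) = \tfrac{\sqrt\tau}{2\sqrt2}\widetilde\cW_2(t) + \tfrac{\tau}{8}t$. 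Then $\Theta_\gl(t) = \tfrac{\gl}{2}t + \tfrac{\sqrt\tau}{2}\cB(t) + \tfrac{\sqrt\tau}{2\sqrt2}\widetilde\cW_1(t)$, so conditioning $\Theta_\gl(1)=n\pi$ turns the Gaussian combination $\tfrac{\sqrt\tau}{2}\cB+\tfrac{\sqrt\tau}{2\sqrt2}\widetilde\cW_1$ into a Brownian bridge of the appropriate total variance $3\tau/8$ pinned to land at $n\pi - \gl/2$; combined with the deterministic drift $\gl t/2$ this is exactly the scaled bridge $\beta^{(n\pi)}$ of Theorem~\ref{Th:Intensity} (variance $\tfrac14\cdot\tfrac{3\tau}{2}=\tfrac{3\tau}{8}$, endpoints $(0,0)$ and $(1,n\pi)$ — note the drift absorbs the dependence on $\gl$ so the conditioned bridge no longer sees $\gl$, matching the statement). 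The $\cB_1$ appearing in $X^{(n)}$ is a relabelling of (a two-sided extension of) $\widetilde\cW_2$, and $e^{\rho_\gl}/\|e^{\rho_\gl}\|$ is scaling-invariant so the deterministic $\tfrac{\tau}{8}t$ drift may be re-centred as $-\tfrac{\tau}{8}|t-U|$ provided one also accounts for the $|\partial_\gl\Theta_\gl(1)|$ weighting.

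That weighting is where the uniform variable $U$ enters, and I expect it to be the main obstacle. From the explicit formula $\Theta_\gl(1)=\tfrac{\gl}{2} + (\text{noise independent of }\gl)$ we get $\partial_\gl\Theta_\gl(1) = \tfrac12$ deterministically, so the Kac--Rice weight is the constant $\tfrac12$ and contributes only the Jacobian already accounted for; hence at first sight no $U$ should appear. The resolution is that the eigenvector in Theorem~\ref{Th:Sch} is $\Psi_\gl \propto e^{\rho_\gl}(\sin\Theta_\gl,\cos\Theta_\gl)^\intercal$ \emph{normalized in $L^2$}, and the correct intensity (a Palm-type computation for the point process, not a bare Kac--Rice count) requires size-biasing by the eigenvector's $L^2$-mass $\int_0^1 e^{2\rho_\gl(t)}dt$ relative to a reference — equivalently, the stationarity structure of the underlying canonical system forces a shift-invariance in $t$. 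The clean way to see this is to exploit the shift-covariance of the diffusions: the law of $(\Theta_\gl(\cdot),\rho_\gl(\cdot))$ on $[0,1]$ conditioned on $\Theta_\gl(1)\in\pi\Z$, re-rooted at a $t$-independent random point, becomes stationary, and integrating the normalized eigenvector against $G$ then produces exactly the average over the root location $U\sim\mathrm{Unif}[0,1]$ of the stationary object, with the one-sided Brownian motion $\widetilde\cW_2$ on $[0,1]$ turning into the two-sided $\cB_1$ centred at $U$ with the drift $-\tfrac{\tau}{8}|t-U|$. Making this re-rooting rigorous — tracking how the $L^2$-normalization interacts with the conditioning on the discrete level set and producing the independent $U$ — is the delicate step; everything else is the Gaussian bookkeeping sketched above. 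I would carry it out by first proving the identity for the \emph{unnormalized} first coordinate and the joint law of $(\Theta_\gl,\rho_\gl)$ (pure Gaussian computation plus the Brownian-bridge decomposition of the conditioning), then separately establishing the re-rooting lemma for the canonical system and deducing the stated form of $X^{(n)}$.
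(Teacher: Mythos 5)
Your overall strategy is the paper's: a Kac--Rice/co-area computation over the level set $\{\gl:\Theta_\gl(1)\in\pi\Z\}$, the observation that $\int_0^\cdot\Re(e^{2i\Theta_\gl}d\cW)$ and $\int_0^\cdot\Im(e^{2i\Theta_\gl}d\cW)$ are independent Brownian motions so that $\Theta_\gl$ and $\rho_\gl$ decouple, the Gaussian law $\cN(\gl/2,3\tau/8)$ of $\Theta_\gl(1)$, and the Brownian-bridge description of $\Theta_\gl$ conditioned on its endpoint. All of that is correct and matches the paper. (One technical point you omit: to pass to the limit $\eps\to0$ in the co-area approximation one needs the dominating bound $\E[\#\{\Sch_\tau\cap[-a,a]\}]<\infty$, which the paper supplies as a separate lemma.)

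The genuine gap is your claim that $\partial_\gl\Theta_\gl(1)=\tfrac12$ deterministically, and everything you build on it. The representation $\Theta_\gl(t)=\tfrac{\gl}{2}t+\tfrac{\sqrt\tau}{2}\cB(t)+\tfrac{\sqrt\tau}{2\sqrt2}\widetilde\cW_1(t)$ is an identity in law \emph{for each fixed} $\gl$, but $\widetilde\cW_1=\int_0^\cdot\Re(e^{2i\Theta_\gl}d\cW)$ depends on $\gl$ through $\Theta_\gl$, so you cannot differentiate this formula in $\gl$ as if the noise were frozen. Differentiating the SDE \eqref{defThetagl} instead gives $d(\partial_\gl\Theta_\gl)=\tfrac12\,dt-\tfrac{\sqrt\tau}{\sqrt2}(\partial_\gl\Theta_\gl)\,\Im(e^{2i\Theta_\gl}d\cW)$, whence by It\^o's formula
\begin{equation*}
\partial_\gl\Theta_\gl(1)=\frac12\int_0^1 e^{2\rho_\gl(u)-2\rho_\gl(1)}\,du\,,
\end{equation*}
a \emph{random} weight, measurable with respect to the noise driving $\rho_\gl$ and independent of $\Theta_\gl$. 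This is exactly where $U$ comes from: writing the weight as $\tfrac12\int_0^1(\cdots)du$ and exchanging the $du$-integral with the expectation produces the uniform variable, and for each fixed $u$ the exponential factor $e^{2\rho_\gl(u)-2\rho_\gl(1)}$ is a Girsanov density that tilts the drift of $\rho_\gl$ from $\tfrac{\tau}{8}t$ to $\tfrac{\tau}{4}f^u(t)$ with $f^u(t)=(u-|u-t|)/2$, yielding (after using the scale invariance of $e^{\rho}/\|e^{\rho}\|$) the two-sided profile $\tfrac{\sqrt\tau}{2\sqrt2}\cB_1(t-U)-\tfrac{\tau}{8}|t-U|$. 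Your proposed substitute --- that the intensity requires an additional Palm-type size-biasing by the $L^2$-mass, to be established via an unproven ``re-rooting lemma'' --- starts from the false premise that the Kac--Rice weight is constant; the measure $\mu_\tau$ as defined involves no size-biasing beyond the weight $|\partial_\gl\Theta_\gl(1)|$ itself, and that weight already carries the entire mechanism. As written, the delicate step you defer is not merely unproved; the route you sketch for it would not close.
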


As an immediate corollary of this result we recover the density of the intensity measure of $\Sch_\tau$, already obtained in Theorem 10 of \cite{KVV} with a different method:
\begin{align*}
\E\Big[\sum_{\gl \in \mbox{Sch}_{\tau}} G(\gl)\Big] &= \int_\gl  G(\gl) \sum_{n \in \Z} g_{\lambda, \frac32 \tau}(2n\pi) d\gl\,.
\end{align*}
Note that the above density is not translation invariant: it is only $2\pi$-periodic. As $\tau\downarrow 0$, the intensity measure converges to $\sum_{n\in\Z} \delta_{2n\pi}$, the intensity measure of the so-called picket fence. On the other hand, as $\tau \uparrow \infty$, it ``converges'' to an infinite uniform measure on $\R$. This is another hint that $\Sch_\tau$ is critical, in the sense that it interpolates between the localized and delocalized phases of Schr\"odinger operators.

\subsection*{Convergence of the eigenvectors}

We now address our second question on the convergence at the operator level. Given the statement of Theorem \ref{Th:Joint1}, one naturally starts from the recentered operator $(L/\sqrt E)(\cH_L - E) + 2\ell_E$.\\
For convergence purposes, one would like to deal with functions on $(0,1)$ instead of $(0,L)$. Therefore we conjugate this operator with the rescaling map $g \mapsto g(L\; \cdot )$ from $(0,L)$ to $(0,1)$ and this yields\footnote{The corresponding conjugation is not unitary, unless one defines ${\cH}^{(E)}$ on the Hilbert space $L^2((0,1),L dt)$.} the operator ${\cH}^{(E)}$ defined through
\begin{align}
{\cH}^{(E)} f := -\frac1{L\sqrt E} f'' + \sqrt{\frac{L}{E}} dB^{(L)} f + (2\ell_E - L\sqrt E) f\;,\quad t\in [0,1]\;,\label{defopHE}
\end{align}
where $B^{(L)}(t) = L^{-1/2} B(t L)$ is again a standard Brownian motion. The eigenvalues  and (normalized) eigenvectors $(\gl, {\varphi}_\gl^{(E)})$ of ${\cH}^{(E)}$ are in one-to-one correspondence with those of $\cH_L$ via
\begin{equation}\label{Eq:varphimu}
{\varphi}^{(E)}_\gl = \sqrt L \,\varphi_\mu(L\; \cdot)\;,\quad \gl = \frac{L}{\sqrt E}( \mu - E) + 2\ell_E\;.
\end{equation}
Note that the domain of ${\cH}^{(E)}$ is a subset of $L^2((0,1),dt)$ which is nothing but the image of the domain of $\cH_L$ through the rescaling map.

\bigskip

Looking back at the statement of Theorem \ref{Th:Joint1}, we observe that the point process that appears in the limit is nothing but the following projection of the eigenvalues / eigenvectors of $\tSch_\tau$
$$\bigg\{ \Big(\lambda, {|\Psi_{\lambda}|^2}\Big) : \Theta_{\lambda}(1) \in \pi \Z \bigg\}\;.$$
A first naive guess would then be that the operator ${\cH}^{(E)}$ converges to $\tSch_\tau$, and this would formally imply that the eigenvectors ${\varphi}^{(E)}_\gl$ converge to the eigenvectors of $\tSch_\tau$. It turns out that in this regime of energy, the eigenvectors ${\varphi}^{(E)}_\gl$ oscillate too much to converge as functions: indeed, from standard arguments of the theory of Sturm-Liouville operators, one can deduce that their numbers of zeros on $(0,1)$ is of order $L\sqrt E$.\\


One actually needs to remove these oscillations for the eigenvectors to converge. This can be done by considering the associated probability measures as we did in Theorem \ref{Th:Joint1}. However, in order to get convergence at the operator level, we need to remove these oscillations at the level of the \emph{functions}. To do so, we successively apply two transformations:

\medskip

\emph{Step 1: From $\R$ to $\R^2$.} We consider the pair formed by the eigenvector and its derivative:
$$ \begin{pmatrix} \varphi_\gl^{(E)} \\ \frac1{L\sqrt E} (\varphi_\gl^{(E)})'\end{pmatrix}\;.$$ 

\emph{Step 2: Unrotate.}
Set\footnote{While $L\sqrt E$ is nothing but the order of magnitude of the oscillations of ${\varphi}_\mu^{(E)}$, the correction $\ell_E$, which is of order $1$, is more subtle: it is chosen in such a way that the rotation preserves the Dirichlet b.c.} $E' := L\sqrt E -\ell_E$ and introduce the (evolving) rotation matrix
\begin{align}
C_{E'} = C_{E'}(t) := \begin{pmatrix}  \cos E' t & - \sin E' t \\ \sin E' t & \cos E' t\end{pmatrix}\;.\label{def:CE'}
\end{align}
\indent Then, we define
\begin{equation}\label{Eq:PsiE} \Psi_\gl^{(E)} := C_{E'} \begin{pmatrix} \varphi_\gl^{(E)} \\ \frac1{L\sqrt E} (\varphi_\gl^{(E)})'\end{pmatrix}\;.\end{equation}

\begin{theorem}[Joint convergence of the eigenvalues and eigenvectors]\label{Th:Joint2}
Fix $\tau > 0$ and consider $E=E(L) \sim L/\tau$. As $L\to\infty$, the point process on $\R\times \cC([0,1],\R^2)$:
\begin{align*}
\Big\{\Big(\gl,\frac{\Psi^{(E)}_\lambda}{\|\Psi^{(E)}_\lambda\|_{L^2([0,1],\R^2)}}\Big), \;\gl \mbox{ eigenvalue of }\cH^{(E)} \Big\}
\end{align*}
converges in law to the point process of eigenvalues/eigenvectors of $\tSch_{\tau}$, i.e.
\begin{align*}
\{(\gl, \Psi_{\lambda}), \;\gl \mbox{ eigenvalue of }\tSch_\tau\}\;.
\end{align*} 
\end{theorem}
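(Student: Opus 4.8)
The strategy is to prove Theorem~\ref{Th:Joint2} by combining three ingredients: (i) an abstract resolvent-convergence criterion for the transformed operators, (ii) a diffusion/SDE analysis of the Riccati-type flow (the evolving angle and log-amplitude) attached to $\cH^{(E)}$, showing convergence of these phase variables to the pair $(\Theta_\gl,\rho_\gl)$, and (iii) the identification, already available from Theorems~\ref{Th:Sch} and~\ref{Th:Joint1}, of the limiting object. Concretely, I would first rewrite $\cH^{(E)}$ in first-order (Dirac / canonical system) form on $\R^2$: the pair $\big(\varphi^{(E)}_\gl, (L\sqrt E)^{-1}(\varphi^{(E)}_\gl)'\big)^\intercal$ solves a linear ODE driven by the rescaled white noise $dB^{(L)}$, with a large rotation term of frequency $\sim L\sqrt E$ coming from the Laplacian. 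Conjugating by the rotation $C_{E'}$ (Step 2 of the excerpt) removes the fast rotation and turns the system into $\partial_t \Psi = \big(\text{small drift} + \text{oscillatory noise}\big)\Psi$, where the oscillatory noise has coefficients modulated by $\cos(2E't),\sin(2E't)$. Here is where an averaging / homogenization phenomenon kicks in: these fast oscillations against the white noise $dB^{(L)}$ produce, in the $L\to\infty$ limit, three asymptotically independent white noises — a diagonal one $d\cB$ (the zero-frequency part) and two from the $e^{\pm 2i E' t}$ modes, which become $d\cW_1,d\cW_2$ — with exactly the variances $\sqrt\tau$, $\sqrt\tau/\sqrt2$ appearing in \eqref{DefopSch}. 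This is the computation that ``explains'' why the enlargement to $\R^2$ and this particular noise matrix arise.

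Second, I would set up the convergence at the level of the \emph{Green's kernels / resolvents}. Since both $\cH^{(E)}$ (suitably realized, cf.\ the footnote about $L^2((0,1),L\,dt)$) and $\tSch_\tau$ are self-adjoint with discrete spectrum, and the limiting operator has compact resolvent, the right framework is convergence of the integral kernels of $(\cH^{(E)} - z)^{-1}$ to those of $(\tSch_\tau - z)^{-1}$ for some (hence all) non-real $z$, after the unitary conjugation built from $C_{E'}$ and the embedding $\R \hookrightarrow \R^2$ via $f\mapsto (f, (L\sqrt E)^{-1} f')^\intercal$. The kernel of the resolvent of a canonical system is expressible through two solutions of the homogeneous equation (one satisfying the left boundary condition, one the right), and these solutions are precisely the ODE flows analyzed in the previous paragraph. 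So the convergence of kernels reduces to: (a) tightness and convergence in law of the fundamental solution matrix of the conjugated ODE on $[0,1]$, jointly in the ``spectral parameter'' (which after rescaling plays the role of $\gl$), to the fundamental solution of $\tSch_\tau$; and (b) control of the relevant Wronskian-type normalizations staying bounded away from $0$ (this uses that $z$ is non-real, so no eigenvalue of the limit sits at $z$). From kernel convergence one deduces convergence in law of the point process of eigenvalues/eigenvectors by standard arguments: eigenvalues are poles of the resolvent, eigenvectors are residues, and on compact spectral windows one gets joint convergence of the finite configurations. The $L^2$-normalization of $\Psi^{(E)}_\gl$ passes to the limit because the conjugation $C_{E'}$ is pointwise orthogonal, so $\|\Psi^{(E)}_\gl\|_{L^2}=\big\|(\varphi^{(E)}_\gl, (L\sqrt E)^{-1}(\varphi^{(E)}_\gl)')\big\|_{L^2}$, and one shows the derivative term contributes a factor tending to that of $\Psi_\gl$.

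Third, the SDE step in detail. Writing the conjugated pair in polar-type coordinates $\Psi = r(\sin\vartheta,\cos\vartheta)^\intercal$ (this matches the form of $\Psi_\lambda$ in Theorem~\ref{Th:Sch}), the ODE becomes a coupled pair for $(\vartheta, \log r)$ with drift $O(1)$ and martingale part built from $dB^{(L)}$ modulated by $\cos,\sin$ of the fast phase and of $2\vartheta$. I would invoke a diffusion-approximation result (e.g.\ a martingale problem / Kurtz--Protter type stochastic averaging, or the classical oscillatory-integral estimates used in \cite{KVV,RV} and in our companion papers \cite{DL1,DL3}) to show $(\vartheta_\gl,\log r_\gl)\Rightarrow(\Theta_\gl,\rho_\gl)$ jointly over $\gl$ in a compact set, with the parameter-dependence handled by the explicit $\tfrac\gl2\,dt$ drift term emerging from the rescaling $\gl = \tfrac{L}{\sqrt E}(\mu-E)+2\ell_E$ together with $E\sim L/\tau$. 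Once this convergence is in hand, the resolvent kernel of $\cH^{(E)}$ — which is an explicit algebraic expression in these flows — converges to the corresponding expression for $\tSch_\tau$, and Theorem~\ref{Th:Joint2} follows.

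\textbf{Main obstacle.} The crux is the joint, locally-uniform-in-$\gl$ diffusion approximation of the phase/amplitude flows \emph{together with} enough quantitative control (moment bounds, oscillation/modulus-of-continuity estimates in $\gl$) to (i) guarantee tightness of the fundamental solution as a random element of $\cC$ of the relevant path space, and (ii) ensure the Wronskian normalizations appearing in the resolvent kernel do not degenerate and converge. The oscillatory averaging against white noise is technically delicate because the fast frequency $2E'\sim 2L\sqrt E$ interacts with the diffusive scaling of $dB^{(L)}$; one needs the standard but careful estimates showing that $\int_0^t e^{2iE's}\,dB^{(L)}(s)$ and its real counterpart become independent Brownian motions with the stated variances, with error terms uniform over the spectral window. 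Handling the boundary conditions through the rotation (the role of the order-one correction $\ell_E$, as flagged in the footnote) and verifying that the conjugation is genuinely asymptotically unitary are the other points requiring care. I expect everything else — self-adjointness of $\tSch_\tau$ (Theorem~\ref{Th:Sch}), identification of the limit point process, and the translation from kernel convergence to point-process convergence — to be comparatively routine given the machinery already developed.
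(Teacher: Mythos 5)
Your central technical ingredient --- the joint, locally-uniform-in-$\gl$ diffusion approximation of the phase/amplitude (Pr\"ufer) flows of the rotated first-order system, with the noise-explosion computation producing $d\cB,d\cW_1,d\cW_2$ --- is exactly the paper's key step (Proposition~\ref{Prop:CVSDEs}, proved by tightness, a Riemann--Lebesgue/It\^o control of the oscillatory error terms, and a martingale problem). The gap is in how you pass from there to the point process. You propose to conclude via convergence of the resolvent kernels, reading off eigenvalues as poles and eigenvectors as residues. This fails here for two reasons. First, the kernels do \emph{not} converge pointwise: the kernel of the transformed resolvent carries the factor $R_{E'}(s)=C_{E'}TC_{E'}^\intercal$, which oscillates at frequency $2E'$ and only converges to $\tfrac12 I$ weakly (after integration against a fixed $g$). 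This is precisely why the paper obtains only \emph{strong} resolvent convergence and proves that \emph{norm} resolvent convergence is false (test against $g_E(t)=(\sin^2 E't,-\sin E't\cos E't)^\intercal$, which is annihilated by $R_{E'}$). Second, strong resolvent convergence does not control the spectrum or the eigenprojections --- the paper states explicitly that Theorem~\ref{Th:CVoperator} does not imply Theorem~\ref{Th:Joint2} --- so the ``poles and residues'' step cannot be run with the convergence that is actually available.

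The correct bridge, given your SDE convergence, is elementary and bypasses the resolvent entirely: by the explicit formula
\begin{equation*}
\partial_\gl \Theta_{\gl}(t) = \tfrac{1}{2}\int_0^t e^{2\ln\Gamma_\gl(u)-2\ln\Gamma_\gl(t)}\,du>0
\end{equation*}
(and its analogue for $\theta^{(E)}_\gl$), the maps $\gl\mapsto\theta^{(E)}_\gl(1)$ and $\gl\mapsto\Theta_\gl(1)$ are a.s.\ increasing bijections of $\R$. The eigenvalues of $\tSchE$ are exactly the levels where $\theta^{(E)}_\gl(1)$ hits $\pi\Z$, so locally uniform convergence of the phase implies convergence of the ordered sequence of these hitting levels; the normalized eigenvectors are continuous functionals of $(\theta^{(E)}_\gl,r^{(E)}_\gl)$, so they converge jointly. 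Replace your second paragraph by this monotonicity argument and the proof closes; as written, the resolvent route is a dead end.
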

\noindent In this statement, the point processes are seen as elements of  the set of measures on $\R\times \cC([0,1],\R^2)$ that are finite on $K\times \cC([0,1],\R^2)$ for any compact set $K\subset \R$, endowed with the smallest topology that makes continuous $\mu \mapsto \langle \mu,f\rangle$ for all bounded and continuous maps $f:\R\times\cC([0,1],\R^2)\to\R$ that are compactly supported in their first coordinate.\\

This result, combined with our description of the intensity measure of $\tSch_\tau$ given in Theorem \ref{Th:Intensity}, proves part (4) of~\cite[Conjecture 1.3]{RV} on the universal shape of a \emph{typical eigenvector} associated to a ``high eigenvalue'' of $\cH_L$. It is interesting to note that such a behavior can already be observed for the eigenvectors in the localized regime of $\cH_L$ but for high enough energies (energies of order $1 \ll E \ll L$), see \cite{DL3} for more details. It is conjectured in~\cite{RV} that this shape should appear for various critical operators thus its denomination ``universal''. It was also proved to arise in another random Schr\"odinger model recently, see \cite{Nakano2019}.

\subsection*{Convergence at the operator level}

Theorem \ref{Th:Joint2} establishes a relationship between the eigenvalues/eigenvectors of $\tSch_\tau$ and those of $\cH_L$. However it does not exactly answer our second question, and more importantly, it does not explain how the form \eqref{DefopSch} taken by $\tSch_\tau$ arises from $\cH_L$. This is the purpose of our next result.\\

Given Theorem \ref{Th:Joint2}, our second question can be rephrased as follows: is there an operator associated to the point process $(\gl,\Psi_\gl^{(E)})$ and does this operator converge to $\tSch_\tau$ ?\\

We will see later on that a.s.~the space generated by the family of functions $\{\Psi^{(E)}_\gl\}_\gl$ is \emph{not} dense in $L^2([0,1],\R^2)$, and therefore there is \emph{no} self-adjoint operator on $L^2([0,1],\R^2)$ whose eigenvalues/eigenvectors are given by $\{\gl,\Psi^{(E)}_\gl\}_\gl$. Consequently, ``the'' operator that we are looking for must live on a smaller space: this is not surprising since our original operator lives on $L^2([0,1],\R)$.\\

We will construct an operator denoted $\tSchE$, which is unitarily equivalent to $\cH^{(E)}$ and lives on a \emph{quotient} space of $L^2([0,1],\R^2)$. The corresponding unitary map is the lift at the operator level of Steps 1 and 2 above. We will see along the construction of this map how the form \eqref{DefopSch} taken by $\tSch_\tau$ arises from $\cH^{(E)}$, see in particular Equation \eqref{Eq:yE} below. We postpone to the end of the introduction the detailed presentation of this construction since it requires some notation.

We now state our convergence result at the operator level: since there are some issues with the underlying spaces on which our operator acts, the precise statement will be given in Subsection \ref{Subsec:PreciseResolv}.

\begin{theorem}[Strong-resolvent convergence]\label{Th:CVoperator}
Fix $\tau > 0$ and consider $E=E(L) \sim L/\tau$. As $L\to\infty$, the operator $\tSchE$ converges in law, in the strong resolvent sense, towards the operator $\tSch_\tau$. However it does not converge in law to $\tSch_\tau$ in the norm resolvent sense.
\end{theorem}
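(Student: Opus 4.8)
The plan is to prove the two halves of Theorem~\ref{Th:CVoperator} separately, since they require genuinely different arguments: strong resolvent convergence is a \emph{positive} statement about convergence of the resolvents applied to fixed vectors, while failure of norm resolvent convergence is a \emph{negative} statement that we will extract from a spectral obstruction.

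\textbf{Strong resolvent convergence.} The natural route is to use the characterization of strong resolvent convergence for self-adjoint operators defined via quadratic forms (or, equivalently, via the canonical-systems/Dirac formulation that underlies both $\tSchE$ and $\tSch_\tau$). First I would fix the common Hilbert space and the unitary identifications: $\tSchE$ is unitarily equivalent to $\cH^{(E)}$ via the lift of Steps 1 and 2, so it suffices to track what Steps 1--2 do to the resolvent of $\cH^{(E)}$. The key analytic input is that $\cH^{(E)}$ is a Sturm--Liouville operator with a rescaled white-noise potential, and its resolvent can be written explicitly through the solutions of the associated second-order ODE (equivalently, the first-order system for $(f, (L\sqrt E)^{-1} f')$). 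After applying the rotation $C_{E'}$, the system becomes a Dirac-type system whose coefficients are exactly the martingale-plus-drift terms that, by the diffusion computations behind Theorems~\ref{Th:Joint1}--\ref{Th:Sch}, converge in law (jointly over the relevant noises) to the coefficients of $\tSch_\tau$ written in the form~\eqref{DefopSch}; this is precisely the content alluded to near Equation~\eqref{Eq:yE}. So the scheme is: (i) realize all operators on one probability space via Skorokhod-type coupling so that the driving noises of $\tSchE$ converge almost surely to those of $\tSch_\tau$ in the appropriate path topology; (ii) show that the fundamental solutions (transfer matrices) of the rotated system converge locally uniformly on $[0,1]$, using continuity of solutions of linear systems with respect to their (distributional) coefficients in the rough-path / Young sense already developed for these diffusions; (iii) deduce convergence of the Green's kernel, hence of $(\tSchE - z)^{-1}$ applied to any fixed smooth vector, for $z$ in the upper half-plane; (iv) upgrade from a.s. convergence along the coupling to convergence in law of the resolvent, and conclude strong resolvent convergence by density. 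The main obstacle here is step (ii): the coefficients only converge in a distributional sense and the quotient-space structure on which $\tSchE$ lives must be reconciled with the full $L^2([0,1],\R^2)$ on which $\tSch_\tau$ lives, so one has to check that the ``missing directions'' (the non-dense span of the $\Psi_\gl^{(E)}$) do not contribute to the resolvent in the limit — this is exactly where the precise definition of the quotient space, to be given in Subsection~\ref{Subsec:PreciseResolv}, does the work.

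\textbf{Failure of norm resolvent convergence.} For this I would argue by contradiction using a spectral consequence of norm resolvent convergence: if $\tSchE \to \tSch_\tau$ in the norm resolvent sense, then the spectra converge in the Hausdorff sense on compacts, and in particular there can be no eigenvalues of $\tSchE$ escaping to a point outside the spectrum of $\tSch_\tau$, and conversely spectral gaps are preserved. The obstruction is the residual ``free'' part of $\cH^{(E)}$: because the span of $\{\Psi^{(E)}_\gl\}_\gl$ is not dense in $L^2([0,1],\R^2)$, the operator $\tSchE$ — if one insists on viewing it through the two-dimensional picture — carries, in addition to the eigenvalues converging to $\Sch_\tau$, a component that behaves asymptotically like the (deterministic, unbounded) generator $2\bigl(\begin{smallmatrix}0&-\partial_t\\\partial_t&0\end{smallmatrix}\bigr)$ restricted to the orthogonal complement; equivalently, the high oscillation that Steps 1--2 only partially remove leaves a tail of eigenvalues of $\cH^{(E)}$ that, after recentring, spread over all of $\R$ rather than accumulating on $\Sch_\tau$. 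Concretely: the eigenvalue counting function of $\cH^{(E)}$ near the recentring energy grows like $L\sqrt E \sim L^{3/2}/\sqrt\tau$, so within any fixed window $[-M,M]$ of the rescaled variable $\gl$ there are $\Theta(1)$ eigenvalues (consistent with $\Sch_\tau$), but the unitary equivalence with $\tSchE$ forces $\tSchE$ to also ``see'' eigenvalues at scales where $\cH^{(E)}$'s spectrum is $\Theta(1/(L\sqrt E))$-dense, and these do not stabilize. I would make this rigorous by exhibiting, for each $L$, a normalized vector $g_L$ in the domain with $\|(\tSchE - i) g_L\|$ bounded but $(\tSchE-i)^{-1} g_L$ not Cauchy in $L$ (e.g. built from a high-frequency eigenfunction of the free Laplacian, which is nearly an eigenfunction of $\cH^{(E)}$ at a non-trivial recentred energy), and noting this is incompatible with norm convergence since norm resolvent convergence would make $(\tSchE-i)^{-1}$ a Cauchy sequence in operator norm. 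The delicate point is to confirm that this vector survives the passage to the quotient space and genuinely lies in the domain of $\tSchE$; given the explicit construction of the unitary map, this reduces to a direct check on the preimage in $L^2([0,1],\R)$, namely that a suitably localized high-frequency wave packet is in the domain of $\cH^{(E)}$, which it is.
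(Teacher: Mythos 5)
Your plan for the first half (strong resolvent convergence) is essentially the paper's: the resolvent of $\tSchE$, extended to $L^2_I$ by $\overline{(\tSchE-z)^{-1}}g := P_{E'}(\tSchE-z)^{-1}\dot g$ where $\dot g$ is the class of $g$ in $L^2_{R_{E'}}$, is written as an explicit Green kernel built from the Dirichlet and Neumann solutions $y_z^{(E)}, v_z^{(E)}$ of the rotated first-order system; one proves convergence in law of these solutions to $y_z,v_z$ (Proposition \ref{Prop:CVyv}), passes to an a.s.\ coupling by Skorohod, and concludes for smooth $g$ by density, with a Riemann--Lebesgue-type lemma to handle the fact that $R_{E'}\to \tfrac12 I$ only weakly. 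Your steps (i)--(iv) track this faithfully.

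The second half is where the proposal goes wrong. There is no spectral obstruction and no ``residual free component'': $\tSchE$ acts on the quotient space $L^2_{R_{E'}}$ and is \emph{unitarily equivalent} to $\cH^{(E)}$, so its spectrum is exactly the recentred spectrum of $\cH_L$, with local spacing of order $1$ after the rescaling $\gl = (L/\sqrt E)(\mu-E)+2\ell_E$; no eigenvalues at scale $1/(L\sqrt E)$ appear, and by Theorem \ref{Th:Joint2} the eigenvalue point processes of $\tSchE$ converge to $\Sch_\tau$. Any contradiction argument based on Hausdorff convergence of spectra is therefore doomed: the spectra do converge. The actual obstruction lives entirely in the \emph{extension} of the resolvent to $L^2_I$, which is the object the precise statement is about: $\overline{(\tSchE-z)^{-1}}$ annihilates the kernel of the canonical projection $L^2_I\to L^2_{R_{E'}}$, i.e.\ every $g$ with $R_{E'}g=0$, and this kernel does not disappear in the limit. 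The paper's witness is $g_E(t)=(\sin^2 E't,\,-\sin E't\cos E't)^\intercal$: one checks $R_{E'}g_E=0$, hence $\overline{(\tSchE-z)^{-1}}g_E=0$, while $\|g_E\|_{L^2_I}^2\to 1/2$, $g_E$ converges weakly to $(1/2,0)^\intercal$, and the Hilbert--Schmidt operator $(\tSch_\tau-z)^{-1}$ maps it (via Riemann--Lebesgue and dominated convergence) to an a.s.\ non-zero limit because $y_z(t)$ and $\hat y_z(t)$ are pointwise linearly independent. This uniformly bounded family on which the two operators disagree by a non-vanishing amount is what kills norm convergence.

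Two further problems with your proposed construction. First, a vector lifted from the domain of $\cH^{(E)}$ lies by construction in the ``good'' part of $L^2_I$ (the image of $P_{E'}$), so it cannot probe the degenerate directions that are responsible for the failure; indeed on such vectors the resolvents are governed by the Green kernels, which do converge. Second, the ``not Cauchy'' criterion you invoke requires a \emph{fixed} test vector: norm operator convergence controls $\|(A_L-z)^{-1}h_L-(A-z)^{-1}h_L\|$ for bounded sequences $h_L$, but it does not make $L\mapsto (A_L-z)^{-1}h_L$ Cauchy when $h_L$ itself varies with $L$, so even granting your construction the contradiction would not follow as stated.
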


This answers our second question. Note that the strong resolvent convergence does not imply the convergence of the eigenvalues/eigenvectors, see e.g.~\cite{WeidmannResolvent}: in particular, Theorem \ref{Th:CVoperator} does not imply Theorem \ref{Th:Joint2}.

\subsection*{Top of the spectrum}

Finally, let us examine the spectrum of $\cH_L$ for energies $E$ that go to $\infty$ much faster than $L$. Note that now $L/E \to 0$ so that, formally, we are in the same situation as before but with $\tau =0$. We keep the same definitions for $\cH^{(E)}$, $\Psi^{(E)}_\gl$ and $\tSchE$ in that case (see the next paragraph for the precise definition of the latter). Our next result shows that all the previous results still hold but the limits are now deterministic: at first order, the influence of the white noise becomes negligible.

\begin{theorem}\label{Th:Top}
Consider $E=E(L) \gg L$. As $L\to\infty$, the operator $\tSchE$ converges in probability, in the strong resolvent sense, towards the picket fence operator on $L^2_I$
$$ \mathtt{F} := 2 \begin{pmatrix} 0 & -\partial_t\\ \partial_t & 0 \end{pmatrix}\;,$$
endowed with Dirichlet b.c. However the convergence does not hold in the norm resolvent sense.\\
Furthermore the point process:
\begin{align*}
\Big\{\Big(\gl, \frac{\Psi^{(E)}_\lambda}{\|\Psi^{(E)}_\lambda\|_{L^2([0,1],\R^2)}}\Big), \;\gl \mbox{ eigenvalue of }\cH^{(E)} \Big\}
\end{align*}
converges in probability to the eigenvalues/eigenvectors of $\mathtt{F}$, which happen to be given by
\begin{align*}
\Big\{\Big(\lambda, \begin{pmatrix} \cos(\frac{\lambda}{2} \cdot ) \\[0.3cm] \sin (\frac{\lambda}{2} \cdot ) \end{pmatrix} \Big), \;\gl \in 2\pi \Z \Big\}\;.
\end{align*} 
\end{theorem}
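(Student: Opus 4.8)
The plan is to treat the Top regime ($E \gg L$) as the ``$\tau = 0$'' degeneration of the Critical regime, and to reuse the machinery developed for Theorems \ref{Th:Joint2} and \ref{Th:CVoperator} with the noise terms now vanishing in the limit. Recall that $\cH^{(E)}$ is given by \eqref{defopHE}: its leading part is $-\frac{1}{L\sqrt E}\partial_t^2 + (2\ell_E - L\sqrt E)$, and the potential term $\sqrt{L/E}\, dB^{(L)}$ has coefficient $\sqrt{L/E}\to 0$. The unitary map from Steps 1--2 turning $\cH^{(E)}$ into $\tSchE$ is purely algebraic (it does not involve the noise), so exactly as in the Critical case one obtains that $\tSchE$ equals $\mathtt{F}$ plus a noise matrix whose coefficients are of size $\sqrt{L/E}$ (compare \eqref{DefopSch}: the $\sqrt\tau$ there is replaced by a vanishing prefactor here, since $L/E$ plays the role of $\tau$). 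The precise definition of $\tSchE$ in this regime is the one given in Subsection \ref{Subsec:PreciseResolv} with the parameter degenerated accordingly.

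First I would establish the strong-resolvent convergence $\tSchE \to \mathtt{F}$. The natural route is the one already used for Theorem \ref{Th:CVoperator}: identify the resolvent of $\tSchE$ with an explicit integral/transfer-operator expression built from the solution of the associated first-order linear system (the canonical-system/Dirac form), show that this solution converges, locally uniformly in $t$ and in probability, to the solution of the deterministic system defining $\mathtt{F}$ — here convergence is immediate since the driving noise enters with a coefficient $\sqrt{L/E}\to 0$, so the stochastic system converges to the ODE $2\binom{0\ \ -\partial_t}{\partial_t\ \ \ 0}\Psi = \lambda \Psi$ by a routine Gronwall/Doob-inequality estimate — and then pass this convergence through to the resolvents on the relevant quotient space. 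The description of the eigenvalues/eigenvectors of $\mathtt{F}$ is an elementary computation: solving $2\binom{0\ \ -\partial_t}{\partial_t\ \ \ 0}\Psi = \lambda\Psi$ with $\Psi = (\Psi^1,\Psi^2)^\intercal$ gives $\Psi^1 = \cos(\tfrac\lambda2 t)$, $\Psi^2 = \sin(\tfrac\lambda2 t)$ up to normalization, and the Dirichlet b.c. (both $\Psi(0)$ and $\Psi(1)$ parallel to $(0,1)^\intercal$) force $\sin(\lambda/2)=0$, i.e.\ $\lambda \in 2\pi\Z$.

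For the convergence of the point process of eigenvalues/eigenvectors, I would follow the scheme of the proof of Theorem \ref{Th:Joint2} verbatim, replacing the limiting diffusions $(\Theta_\lambda,\rho_\lambda)$ by their $\tau=0$ deterministic limits $\Theta_\lambda(t) = \tfrac\lambda2 t$ and $\rho_\lambda \equiv 0$. Concretely: use the Sturm-oscillation/phase-function representation of the eigenvalues of $\cH^{(E)}$, show that the rescaled phase process converges in probability (not just in law) to the linear function $t\mapsto \tfrac\lambda2 t$ because all the stochastic increments carry a vanishing prefactor, deduce convergence of the zero-counting function and hence of the eigenvalues, and finally transport this to the unrotated pairs $\Psi^{(E)}_\lambda$ via \eqref{Eq:PsiE}, noting that $C_{E'}$ exactly cancels the fast rotation and that $e^{\rho_\lambda}\to 1$. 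Tightness and the identification of subsequential limits go through as before; since the limit is deterministic, convergence in law upgrades to convergence in probability. The statement that the convergence fails in the norm-resolvent sense is inherited from the same obstruction as in Theorem \ref{Th:CVoperator}: the spectrum of $\mathtt{F}$ is $2\pi\Z$, which is unbounded with bounded gaps, while $\tSchE$ lives on a quotient space whose ``missing directions'' prevent norm convergence of resolvents — one exhibits approximate eigenfunctions at energies far from $2\pi\Z$ to see the resolvent norms do not converge.

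The main obstacle I expect is not any of the probabilistic estimates — those are genuinely easier here than in the Critical case because the noise is multiplied by $\sqrt{L/E}\to 0$ — but rather the bookkeeping around the degenerate parameter: one must check that the definition of $\tSchE$ and of the quotient space, together with the boundary-condition correction $\ell_E$ and the rotation $C_{E'}$, all continue to make sense and behave well when $L/E\to 0$ rather than $L/E\to\tau>0$, and in particular that the unitary equivalence $\tSchE \cong \cH^{(E)}$ and the strong-resolvent limiting procedure of Subsection \ref{Subsec:PreciseResolv} are uniform enough to be applied in this regime. Once that is in place, everything reduces to specializing the Critical-regime arguments at $\tau = 0$.
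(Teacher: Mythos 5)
Your proposal matches the paper's own proof: Section \ref{sec:top} treats the Top regime precisely as the $\tau=0$ degeneration of the Critical regime, notes that convergence in law to a deterministic limit upgrades to convergence in probability, solves the now-deterministic system to get the picket fence $2\pi\Z$ with $y_\lambda(t)=(\sin(\lambda t/2),\cos(\lambda t/2))^\intercal$, and reruns the arguments of Sections \ref{sec:CVoperator} and \ref{Sec:SDEs} with every noise term carrying the vanishing prefactor $\sqrt{L/E}$. One small correction: the failure of norm-resolvent convergence has nothing to do with spectral gaps or approximate eigenfunctions away from $2\pi\Z$ --- it is exactly the test-function argument of Theorem \ref{Th:CVoperator}, using the oscillating $g_E$ with $R_{E'}g_E=0$, which is annihilated by $\overline{(\tSchE-z)^{-1}}$ but sent by $(\mathtt{F}-z)^{-1}$ to a non-degenerate limit.
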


\subsection*{A non-trivial unitary map}

We now present the unitary transformation that transforms the operator $\cH_L$ into $\tSchE$. For the sake of clarity, it is first spelled out at the level of the SDEs solved by the eigenvectors, and then at the operator level. Let us write the family of SDEs associated to the operator $\cH^{(E)}$:
\begin{align}\label{Eq:uE}
-\frac1{L\sqrt E} (u_\gl^{(E)})'' + \sqrt{\frac{L}{E}}u_\gl^{(E)} dB^{(L)} + (2\ell_E -  L\sqrt E) u_\gl^{(E)} = \gl  u_\gl^{(E)} \;, \quad t\in [0,1]\;,
\end{align}
with initial conditions $u_\gl^{(E)}(0) = 0$ and\footnote{Any non-zero value for $(u_\gl^{(E)})'(0)$ would do, we choose $L\sqrt E$ for later convenience.} $(u_\gl^{(E)})'(0) = L\sqrt E$. By the Sturm-Liouville theory, the parameter $\lambda$ is an eigenvalue of $\cH^{(E)}$ if and only if $(u_\gl^{(E)}(1),(u_\gl^{(E)})'(1))$ is parallel to $(0,1)$; and in that case, the associated normalized eigenvector $\varphi_\gl^{(E)}$ is a multiple of $u_\gl^{(E)}$.\\

Our transformation of the operator $\cH^{(E)}$ can be factorized into two steps:\\

\emph{Step 1: From $\R$ to $\R^2$.} Consider the matrix 
$$T = \begin{pmatrix} 1 & 0\\0&0\end{pmatrix}\;.$$
The above collection of SDEs \eqref{Eq:uE} can be rewritten
\begin{equation}\label{SDEs:u}
\begin{pmatrix} \sqrt{\frac{L}{E}} \,dB^{(L)} + 2\ell_E - L\sqrt E & - \partial_t \\ \partial_t & -L\sqrt E\end{pmatrix} \begin{pmatrix} u_\gl^{(E)} \\ \frac1{L\sqrt E} (u_\gl^{(E)})' \end{pmatrix} = \gl\; T  \begin{pmatrix} u_\gl^{(E)} \\ \frac1{L\sqrt E} (u_\gl^{(E)})' \end{pmatrix}\;.
\end{equation}
The l.h.s.~takes a form similar to \eqref{DefopSch}: the main difference consists in the unbounded terms $L\sqrt E$ that still need to be ``killed'', this will be the purpose of Step 2. Before we present it, let us now present the transformation corresponding to Step 1 at the operator level.

\smallskip

Although we lifted our system from $\R$ to $\R^2$, it is \emph{not} canonically associated to a densely defined operator on $L^2((0,1),\R^2)$ and one needs to work on a ``smaller'' space. More precisely, let $L^2_T((0,1),\R^2)$ be the Hilbert space of all measurable functions $f:(0,1)\to\R^2$ such that $\int_0^1 f^\intercal T f < \infty$. In other words, any element of $L^2_T$ can be seen as an equivalent class of $L^2((0,1),\R^2)$ for the relation $f \sim g \Longleftrightarrow f_1 = g_1$ a.e. where $f = (f_1,f_2)^\intercal$ and $g = (g_1,g_2)^\intercal$.

We then define the unitary map $\iota : L^2((0,1),\R) \to L^2_T((0,1),\R^2)$ that associates to any $f \in L^2((0,1),\R)$ its canonical equivalent class in $L^2_T((0,1),\R^2)$, and set
$$ \HH^{(E)} := \iota {\cH}^{(E)} \iota^{-1}\;.$$
This transformation is explained in more details in Section \ref{sec:CVoperator}, where we will also prove that the eigenvalue problem of $\HH^{(E)}$ is associated to the system \eqref{SDEs:u}.

\medskip
\emph{Step 2: Unrotate.} 
We define
$$ y_\gl^{(E)} := C_{E'}  \begin{pmatrix} u_\gl^{(E)} \\ \frac1{L\sqrt E} (u_\gl^{(E)})' \end{pmatrix} \;,$$
where the rotating matrix $C_{E'}$ was defined in \eqref{def:CE'}. A simple computation shows that these processes solve
\begin{equation}\label{Eq:yE}\begin{split}
&\bigg(\begin{pmatrix} 0 & -\partial_t\\ \partial_t & 0\end{pmatrix} + \frac12 {\sqrt{\frac{L}{E}}} \begin{pmatrix} dB^{(L)} + \frac1{\sqrt 2} dW_1^{(L)} & \frac1{\sqrt 2} dW_2^{(L)}\\ \frac1{\sqrt 2} dW_2^{(L)} & dB^{(L)} - \frac1{\sqrt 2} dW_1^{(L)}\end{pmatrix} + \ell_E (2R_{E'}-I)\bigg) y_\gl^{(E)}\\
&= \gl R_{E'}y_\gl^{(E)} \;,\end{split}\end{equation}
with initial condition $y_\gl^{(E)}(0) = (0,1)^\intercal$. Here $I$ is the identity matrix, $R_{E'} := C_{E'} T C_{E'}^\intercal$, and we have introduced the Brownian motions
\begin{align} W_1^{(L)}(t) := \sqrt{2}  \int_0^t \cos (2E's) dB^{(L)}(s)\;,\quad W_2^{(L)}(t) := \sqrt{2}  \int_0^t \sin( 2E's) dB^{(L)}(s)\;.\label{def:W1W2}\end{align}

\smallskip

Let us comment on Equation \eqref{Eq:yE}. First, the conjugation by the rotation matrix $C_{E'}$ removed the unbounded terms $L\sqrt{E}$ from \eqref{SDEs:u} so that all the terms appearing in this equation are bounded w.r.t.~$L$. Moreover, we now see a clear resemblance between \eqref{DefopSch} and the l.h.s.~of \eqref{Eq:yE}. 

Interestingly, although we started from a single Brownian motion, the unbounded oscillations produce two additional, independent Brownian motions in the scaling limit: a phenomenon already observed (at a larger scale) by Valk\'o and Vir\'ag in~\cite{VVlongboxes} in the context of discrete Schr\"odinger operators on long boxes, that heuristically corresponds to dimension $1+$, and that they called \emph{noise explosion} (notice that in both cases, it leads to the delocalization of the spectrum). Second, by the Riemann-Lebesgue Lemma, the matrix $R_{E'}$ converges to $(1/2)I$ and thus the term whose prefactor is $\ell_E$ vanishes in the limit. Finally the prefactor $2$ that appears in \eqref{DefopSch} is actually related to the r.h.s.~of \eqref{Eq:yE} where the term $R_{E'}$ converges towards $(1/2)I$.

\bigskip

Given the formalism already introduced, the desired unitary transformation takes a simple form. Viewing $C_{E'}$ as a unitary map from $L^2_T((0,1),\R^2)$ into $L^2_{R_{E'}}((0,1),\R^2)$, we define
$$ \tSchE := C_{E'} \HH^{(E)} C_{E'}^{-1} = C_{E'} \iota \cH^{(E)} \iota^{-1} C_{E'}^{-1}\;.$$
Consequently, the operator $\tSchE$ is the conjugate of ${\cH}^{(E)}$ with the unitary map $C_{E'} \iota$: it is a self-adjoint operator and its collection $(\gl, \psi^{(E)}_\gl)$ of eigenvalues and normalized eigenfunctions is given by $(\lambda, C_{E'} \iota \,\varphi_\gl^{(E)})$.

The drawback of the present operator formalism is that any element of the domain of $\tSchE$, and therefore any eigenvector, is an element of $L^2_{R_{E'}}$, that is, an equivalent class of $L^2_I$ while the eigenvectors of $\tSch_\tau$ are standard elements of $L^2_I$.
However, it stems from our construction that there is a canonical choice of representative in $L^2_I$ for the elements of the domain. Namely if one sets $P: f\mapsto (f_1,\frac1{L\sqrt E} f_1')^\intercal$ which is densely defined on $L^2_T((0,1),\R^2)$, together with its conjugate $P_{E'} = C_{E'} P C_{E'}^{-1}$, then we will show in Lemma \ref{Lemma:Unitary} that
\begin{equation}\label{Eq:EigenUnitary} P_{E'} \psi^{(E)}_\lambda = \Psi^{(E)}_\gl\;.\end{equation}

%
%

\bigskip

The structure of the rest of the article is as follows. In Section \ref{sec:limitingoperator}, we properly define the Critical Schr\"odinger operator $\tSch_\tau$ using the theory of canonical systems. Then we characterize its spectrum and compute its intensity measure, thus proving Theorems \ref{Th:Sch} and \ref{Th:Intensity}. In Section \ref{sec:CVoperator}, we provide more details on the construction of the operator $\tSchE$ and prove some claims made in the introduction. Then, we show the convergence at the operator level stated in Theorem \ref{Th:CVoperator}. In Section \ref{Sec:SDEs}, we exploit the systems of SDEs associated to the eigenvalues / eigenvectors of the operators at stake, and prove Theorems \ref{Th:Joint1} and \ref{Th:Joint2}. We also prove a technical result stated in Section \ref{sec:CVoperator}. Finally, in Section \ref{sec:top}, we adapt the previous arguments in order to cover the top of the spectrum as in Theorem \ref{Th:Top}.

\section{The Critical Schr\"odinger operator}\label{sec:limitingoperator}

The main objective of this section is to give a rigorous meaning to the operator $\tSch_\tau$ on $L^2_I([0,1],\R^2)$ formally defined by
\begin{equation*}
\tSch_\tau := 2 \begin{pmatrix}
0 & -\partial_t\\
\partial_t & 0
\end{pmatrix}+ \sqrt{\tau} \begin{pmatrix} d\cB + \frac1{\sqrt 2} d\cW_1 & \frac1{\sqrt 2} d\cW_2\\ \frac1{\sqrt 2} d\cW_2 & d\cB - \frac1{\sqrt 2} d\cW_1\end{pmatrix}\;,
\end{equation*}
and endowed with Dirichlet b.c., that is, any $f$ in the domain is such that $f(0)$ and $f(1)$ are parallel to $\begin{pmatrix} 0\\ 1\end{pmatrix}$. Here $\cB$, $\cW_1$ and $\cW_2$ are independent Brownian motions.\\

To carry out the construction, we exhibit a transformation that maps the above formal operator onto a random differential operator whose construction falls into the scope of the theory of canonical systems introduced by De Branges in \cite{DeBranges} (see e.g.~\cite{Remling2002, Remlingbook, Romanov} for reviews on the subject, and \cite{VVoperator} in the context of limiting random matrix models). Then, we rigorously define $\tSch_\tau$ as the image through the inverse transformation of this differential operator. Finally, we show that the eigenvalues / eigenvectors of $\tSch_\tau$ satisfy a system of SDEs naturally associated to the formal expression above.\\

This section is organized as follows. The first subsection \ref{subsec:canonicalsyst} recalls the basic material on the theory of canonical systems and its connection with Dirac equations. Subsection \ref{Subsec:Sch} then applies this material to $\Sch_\tau$ and presents the proofs of Theorem \ref{Th:Sch} and \ref{Th:Intensity}.

\subsection{Canonical systems}\label{subsec:canonicalsyst}

From now on, we write
$$ J := \begin{pmatrix} 0 & -1\\ 1&0\end{pmatrix}\;.$$
Note the identity $J^{-1} = - J$. Let us recall that we write $M^\intercal$ for the transpose of any matrix $M$.

\subsubsection{Canonical systems.} A canonical system is a system of first order differential equations of the form:
\begin{align}
&J \partial_t v(t) = z R(t) v(t), \quad t \in [0,1], \quad z \in \C \label{CanoSyst}\,,
\end{align}
where $R$ is an integrable function from $[0,1]$ into the space of $2 \times 2$ positive symmetric matrices.

\begin{remark}
To simplify the presentation, we have made several restrictive assumptions here. In the general theory, one works on an interval $[0,L)$ that can be unbounded and the matrix $R$ is only assumed to be locally integrable on $[0,L)$ and non-negative. Interestingly, Schr\"odinger operators, and in particular the operator $\cH_L$, can be transformed into a canonical system, but with a non-invertible $R$, see \cite{Remlingbook, Romanov}.
\end{remark}

For any boundary conditions $b_0, b_1 \in \R^2$, let us introduce the domain of the Hilbert space $L^2_R := L^2_R((0,1),\R^2)$:
\begin{align}
\mathcal{D}_{b_0,b_1}(\cT) := \Big\{f \in L^2_R\,: \;f \mbox{ A.C. on }(0,1), R^{-1}J f' \in L^2_R,\;f(0) \parallel b_0,\; f(1) \parallel b_1\Big\}\;. \label{domaincanonicalsyst}
\end{align}
where $\parallel$ means ``is parallel to''. It is proved in \cite{Remlingbook, Romanov} that $\mathcal{D}_{b_0,b_1}(\cT)$ is dense in $L^2_R$ and that 
\begin{align}
\cT := R^{-1}(t) J \partial_t \label{def:operatorcanosyst}
\end{align} 
is a well-defined, self-adjoint operator on $\cD_{b_0,b_1}(\cT)$. Note that $(\gl,\phi)$ is an eigenvalue/eigenvector of $\cT$ if and only if the solution $v$ of \eqref{CanoSyst} with $z=\gl$ satisfies the two b.c.~and $\phi$ is a multiple of $v$.\\

The resolvents of $\cT$ admit explicit kernels~\cite[Th 7.8]{Weidmann}. Fix a $z\in \C$ that does not lie in the spectrum of $\cT$. Let $v_z$ be the solution of \eqref{CanoSyst} that satisfies $v_z(0) = b_0$. Let also $\hat{v}_z$ be the solution of \eqref{CanoSyst} that satisfies $\hat{v}_z(1) \parallel b_1$ and $v_z(0)^\intercal J \hat{v}_z(0) = 1$. Note that the last quantity is nothing but the Wronskian of $v_z$ and $\hat{v}_z$. Then for any $f\in L^2_R$ we have
\begin{equation}\label{Eq:ResolventCano} (\cT-z)^{-1} f(t) := \int_0^1 \big(v_z(t) \hat{v}_z(s)^\intercal \;1_{t \le s} + \hat{v}_z(t) v_z(s)^\intercal\; 1_{s < t}\big) R(s) f(s)\;.\end{equation}

Our hypothesis imply that the functions $v_z$ and $\hat{v}_z$ are continuous on $[0,1]$, and therefore bounded. This readily implies that the operator $(\cT-z)^{-1}$ is Hilbert-Schmidt so that $\cT$ has discrete spectrum.

\subsubsection{Differential operators as canonical systems}
We would like to associate a self-adjoint operator to the following stochastic differential equations:
\begin{align}
J du_z(t) + dV(t) u_z(t) = z u_z(t) dt\;,\quad t \in [0,1]\;,\quad z \in \C\;.\label{eqdiffhgl}
\end{align}
Here $V$ is a $2\times2$ ``noise'' matrix: its entries are It\^o processes (they will be combinations of independent Brownian motions in the case of $\tSch_\tau$). Note that we have not set the initial condition yet.

\begin{remark}
Here we understand $dV(t) u_z(t)$ in the It\^o sense. It turns out that one can construct on a given probability space the solutions of the above SDE simultaneously for all $z \in \C$ and all possible initial conditions. The solutions are continuous w.r.t.~all parameters.
\end{remark}

We will see in this paragraph that we can transform this system into a canonical system \eqref{CanoSyst}. When $dV$ is function-valued, this is already known (see e.g. Example 1 in \cite{Romanov}). Here $dV$ has the regularity of white noise and we thus need to adapt the arguments. The basic idea remains however the same.\\

One introduces the evolving matrix $M := (u^{(N)} u^{(D)})$ where $u^{(N)}$, resp.~$u^{(D)}$, is the solution of \eqref{eqdiffhgl} with $z=0$ and starting from $u^{(N)} = (1,0)^\intercal$, resp.~$u^{(D)} = (0,1)^\intercal$. Of course the superscripts $N$ and $D$ refer to Neumann and Dirichlet. Note that
\begin{equation}\label{Eq:M} dM = J dV M\;,\quad M(0) = I\;,\end{equation}
and
\begin{align*}
d \det M = \big((JdV)_{11} + (JdV)_{22} + d<(JV)_{11}, (JV)_{22}> - d<(JV)_{12},(JV)_{21}> \big)\det M\,,
\end{align*}
with $\det M(0) = 1$. Therefore $M$ remains invertible at any time $t \in [0,1]$.\\

Coming back to the generic solution $u_z$ of the system \eqref{eqdiffhgl}, one considers the transformed process $v_z = M^{-1} u_z$. By computing $d(M v_z)$, one deduces that
\begin{align}\label{Eq:hz}
dv_z = - z M^{-1} J M v_z dt - M^{-1} d\langle M,v_z \rangle \,,
\end{align}
where for all $2\times 2$ matrix $A$ and vector $x \in \R^2$ whose entries are It\^o processes, we define their bracket through:
\begin{align*}
\langle A,x \rangle := \begin{pmatrix} \langle A_{11}, x_1 \rangle + \langle A_{12},x_2 \rangle \\ \langle A_{21}, x_1 \rangle + \langle A_{22},x_2 \rangle \end{pmatrix}\;.
\end{align*}
Equation \eqref{Eq:hz} shows that $v_z$ is differentiable and therefore $d\langle M,v_z \rangle$ vanishes. As a consequence
\begin{align}
J dv_z &= - z J M^{-1} J M v_z dt \notag \\
&= \frac{z}{\det M} M^\intercal \, M v_z dt\,. \label{cansystdirac}
\end{align}
where we used the identity $M^{-1} = -(\det M)^{-1} J M^\intercal \,J$ at the second line.

\begin{remark}
Note that $v_z$ is differentiable, while $u_z$ is Brownian-like. Our transformation removed the irregularity from the latter.
\end{remark}

Denote by $R := M^\intercal\, M/\det M$. Almost surely the matrix $R$ is a positive definite symmetric matrix at all times, and is integrable as its entries are continuous. From the results on canonical systems recalled before, we can associate a self-adjoint operator to the system \eqref{cansystdirac} by setting $\cT_{R} := R^{-1} J \partial_t$ on $L^2_{R}$, and by prescribing some boundary conditions $b_0$ and $b_1$. It acts on a domain $\mathcal{D}_{b_0,b_1}(\cT_R)$ explicited in \eqref{domaincanonicalsyst}.\\

We finally associate to the system of equations \eqref{eqdiffhgl} the following self-adjoint operator
\begin{align*}
\cS := M \cT_R M^{-1} = {\det M} \,(M^{-1})^\intercal \, J \partial_t \,M^{-1}\;,
\end{align*}
that acts on (recall that $M(0) = I$)
\begin{align*}
\mathcal{D}_{b_0, M(1) b_1}(\cS) := \big\{f \in L^2_{(\det M)^{-1} I}:\; M^{-1} f \in \mathcal{D}_{b_0,b_1}(\cT_R) \big\} \;.
\end{align*}
Since $\cS$ is a unitary\footnote{Note that $M$ is not a unitary matrix, but the transformation $f \in L^2_R \mapsto M f \in L^2_{(\det M)^{-1}I}$ is indeed unitary.} transformation of $\cT_R$, we deduce that it also has discrete spectrum. Furthermore, by conjugation we deduce the explicit expression of the kernel of its resolvents: for any $f\in L^2_{(\det M)^{-1} I}$ and any $z\in \C\backslash\R$
\begin{equation}\label{Eq:ResolventDirac}
(\cS-z)^{-1} f(t) :=  \int_0^1 \big(u_z(t) \hat{u}_z(s)^\intercal\, 1_{t \le s}(s) + \hat{u}_z(t) u_z(s)^\intercal \,1_{s < t}\big)\frac{1}{\det M(s)} f(s)\;,
\end{equation}
where $u_z$ and $\hat{u}_z$ are the solutions of \eqref{eqdiffhgl} that satisfy $u_z(0) =b_0$, $\hat{u}_z(1) \parallel b_1$ and $u_z(0)^\intercal J \hat{u}_z(0)=1$.\\

In view of Equation \eqref{eqdiffhgl}, the operator $\cS$ can be written formally
$$ \cS = \begin{pmatrix}
0 & -\partial_t\\
\partial_t & 0
\end{pmatrix} + dV\;,$$
with b.c.~$b_0$ at $0$ and $M(1)b_1$ at $1$. In general, the elements of $\cD(\cS)$ have Brownian like regularity but are not adapted (to the filtration of $V$): therefore one cannot apply It\^o's integration and the above expression for $\cS$ is only formal. On the other hand, the elements of $\cD(\cT_R)$ are absolutely continuous and the action of $\cT_R$ given in \eqref{def:operatorcanosyst} makes perfect sense. Let us mention that it would be possible to give a precise description of the action of $\cS$ on its domain using the theory of rough paths.\\
However, a rigorous connection with the formal equation of $\cS$ can be made at the level of the eigenvalues and eigenvectors:
\begin{lemma}\label{Lemma:EigenSDEs}
Almost surely for every $\gl\in \R$, the pair $(\gl,\varphi_\gl)$ is an eigenvalue / eigenvector of $\cS$ if and only if the solution $u_\gl$ of \eqref{eqdiffhgl} that starts from $u_\gl(0) = b_0$ is such that $u_\gl(1)$ is parallel to $M(1)b_1$ and $\varphi_\gl$ is a multiple of $u_\gl$.
\end{lemma}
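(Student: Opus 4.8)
The statement is essentially a transfer of the characterization of eigenvalues/eigenvectors for the canonical system $\cT_R$ (recalled just after \eqref{def:operatorcanosyst}: $(\gl,\phi)$ is an eigenpair of $\cT_R$ iff the solution $v$ of \eqref{cansystdirac} with $z=\gl$ starting from $v(0)=b_0$ satisfies $v(1)\parallel b_1$, and $\phi$ is a multiple of $v$) through the unitary conjugation $\cS = M\,\cT_R\,M^{-1}$, where $M:L^2_R\to L^2_{(\det M)^{-1}I}$ is unitary. So the plan is: first recall that a unitary conjugation carries eigenpairs to eigenpairs, i.e.\ $(\gl,\phi)$ is an eigenpair of $\cT_R$ on $\cD_{b_0,b_1}(\cT_R)$ iff $(\gl, M\phi)$ is an eigenpair of $\cS$ on $\cD_{b_0,M(1)b_1}(\cS)$; this is immediate from the definition of $\cD_{b_0,M(1)b_1}(\cS)$ as $\{f : M^{-1}f\in\cD_{b_0,b_1}(\cT_R)\}$ and from $\cS f = M\cT_R M^{-1} f$. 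Combining with the canonical-systems characterization, $(\gl,\varphi_\gl)$ is an eigenpair of $\cS$ iff the solution $v_\gl$ of \eqref{cansystdirac} with $v_\gl(0)=b_0$ has $v_\gl(1)\parallel b_1$ and $\varphi_\gl$ is a multiple of $M v_\gl$.

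Next I would identify $M v_\gl$ with the solution $u_\gl$ of \eqref{eqdiffhgl} starting from $b_0$. This is exactly the content of the change of variables $v_z = M^{-1}u_z$ carried out in \eqref{Eq:hz}–\eqref{cansystdirac}: there it is shown that if $u_z$ solves \eqref{eqdiffhgl} then $v_z := M^{-1}u_z$ is differentiable and solves \eqref{cansystdirac}. Conversely, if $v_\gl$ solves \eqref{cansystdirac} with $v_\gl(0)=b_0$, then $u_\gl := M v_\gl$ satisfies $u_\gl(0)=M(0)b_0=b_0$ (since $M(0)=I$) and, by the computation $d(Mv_\gl) = (dM)v_\gl + M\,dv_\gl$ together with \eqref{Eq:M} $dM = J\,dV\,M$ and \eqref{cansystdirac}, one recovers $J\,du_\gl + dV\,u_\gl = \gl\,u_\gl\,dt$; here the absence of a cross-bracket term is because $v_\gl$ is differentiable, as already noted after \eqref{Eq:hz}. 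By pathwise uniqueness for the linear SDE \eqref{eqdiffhgl} (stated in the Remark after \eqref{eqdiffhgl}), $u_\gl = M v_\gl$ is \emph{the} solution of \eqref{eqdiffhgl} with $u_\gl(0)=b_0$. Since $M(t)$ is invertible for every $t\in[0,1]$, the conditions $v_\gl(1)\parallel b_1$ and $u_\gl(1) = M(1)v_\gl(1) \parallel M(1)b_1$ are equivalent, and $\varphi_\gl$ is a multiple of $M v_\gl = u_\gl$ iff it is a multiple of $v_\gl$. This yields the claim.

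The only genuine subtlety — and the step I would be most careful about — is the measurability/simultaneity in the phrase ``almost surely for every $\gl\in\R$''. One needs that, on a single full-measure event, $M$ is invertible at all $t$ (already established: $\det M$ solves a linear SDE with $\det M(0)=1$, hence never vanishes) and that the family $(u_\gl)_{\gl\in\R}$ of solutions of \eqref{eqdiffhgl} can be realized jointly continuously in $\gl$, which is exactly what the Remark after \eqref{eqdiffhgl} provides. On that event the deterministic equivalences above hold for all $\gl$ at once. A second minor point to check is that the argument is genuinely an ``if and only if'': the canonical-systems theory gives that \emph{all} eigenvectors of $\cT_R$ arise this way (multiplicity one, the two boundary conditions plus the ODE pin down the solution up to scalar), so no eigenpair of $\cS$ is missed. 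I do not expect either point to require more than a sentence, so the proof is essentially the conjugation argument plus the SDE change of variables already done in the text.
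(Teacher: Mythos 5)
Your proposal is correct and follows essentially the same route as the paper's proof: the unitary conjugation $\cS = M\,\cT_R\,M^{-1}$ transfers the canonical-system characterization of eigenpairs of $\cT_R$, and the change of variables $u_\gl = M v_\gl$ (with invertibility of $M(t)$ for all $t$) converts the boundary condition $v_\gl(1)\parallel b_1$ into $u_\gl(1)\parallel M(1)b_1$. Your extra care about the converse direction of the change of variables and the ``almost surely for every $\gl$'' quantifier only makes explicit what the paper states more tersely.
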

\begin{proof}
First note that almost surely for every $\gl$, ($u_\gl$ is solution of \eqref{eqdiffhgl} starting from $b_0$ at time $0$ and is parallel to $M(1)b_1$ at time $1$) is equivalent to ($v_\gl$ is solution of \eqref{cansystdirac} starting from $b_0$ at time $0$ and is parallel to $b_1$ at time $1$).\\
Second, almost surely for every $\gl\in \R$, the pair $(\gl,\varphi_\gl)$ is an eigenvalue / eigenvector of $\cS$ if and only if $(\gl, M^{-1}\varphi_\gl)$ is an eigenvalue / eigenvector of $\cT_R$. Then the equation $\cT_R (M^{-1}\varphi_\gl) = \lambda M^{-1}\varphi_\gl$, together with the conditions that $M^{-1}\varphi_\gl$ is parallel to $b_0$ at $0$ and $b_1$ at $1$, is equivalent to saying that $M^{-1}\varphi_\gl$ is a multiple of $v_\gl$ where $v_\gl$ is the solution of \eqref{cansystdirac} that starts from $b_0$ at time $0$ and is parallel to $b_1$ at time $1$. We thus conclude.
\end{proof}

\subsection{Construction and properties of $\tSch_\tau$}\label{Subsec:Sch}

Consider
\begin{align*}
&dV :=  \frac{\sqrt\tau}{2} \mat{d\cB + \frac{1}{\sqrt{2}} d\cW_1}{\frac{1}{\sqrt{2}} d\cW_2}{\frac{1}{\sqrt{2}} d\cW_2}{d\cB - \frac{1}{\sqrt{2}} d\cW_1} \,,
\end{align*}
where $\cB$, $\cW_1$ and $\cW_2$ are independent Brownian motions. Choose $b_0$ and $b_1$ in a such a way that $b_0 = M(1)b_1 = (0,1)^\intercal$, where $M$ is defined in \eqref{Eq:M} (with $dV$ as above). A computation shows that $d\det M = 0$ so that $\det M \equiv 1$ and we thus set $R := M^\intercal \,M$. We apply the general construction of the previous subsection and set
$$ \tSch_\tau := 2 M \cT_R M^{-1}\;.$$

\begin{remark}\label{Rk:VV}
Let us comment here further on the link between the operator $\tSch_\tau$ and the ones appearing in the paper~\cite{VVoperator} of Valk\'o and Vir\'ag. All these operators are associated to canonical system of the form \eqref{def:operatorcanosyst}.
In their approach, Valk\'o and Vir\'ag decompose the matrix $R$ in the following way:
\begin{align*}
R = \frac{X^\intercal X}{\det X},\qquad X = \mat{1}{-x}{0}{y}\,.
\end{align*}
It enables them to encode those operators with an upper half-plane path given by $x +i y$. It turns out that the limiting eigenvalue point processes of various classical ensembles correspond to random paths $x+iy$ with a simple description (for example a hyperbolic Brownian motion with variance $4/\beta$ run in logarithmic time for the bulk limit of $\beta$-ensembles), which gives a nice geometric interpretation of those operators. Moreover, these canonical systems are unitarily equivalent to operators acting on $L^2_I$ via the conjugation by $X$.

Here, we take another type of decomposition of the matrix $R$, via 
\begin{align*}
R = \frac{M^\intercal M}{\det M},\quad dM = JdV M,\quad M(0) = I\,,
\end{align*}
with a \emph{full noise} matrix $M$. It turns out that this representation is appropriate to get convergence of the eigenvectors as well.
\end{remark}

The additional prefactor $2$ motivates the definition of $y_z$ (corresponding to $u_{z/2}$) that solves
\begin{align}\label{Eq:ymuSch}
dy_z(t) = -\frac{z}{2} Jy_z dt  + J dV(t) y_z(t)\;,\quad t \in [0,1]\;,\quad z \in \C\;,
\end{align}
with $y_z(0) = (0,1)^\intercal$. We thus have the following corollary of Lemma \ref{Lemma:EigenSDEs}.

\begin{corollary}\label{Cor:Eigenvalues}
Almost surely for every $\gl\in \R$, the pair $(\gl,\Psi_\gl)$ is an eigenvalue / eigenvector of $\tSch_\tau$ if and only if $y_\gl(1)$ is parallel to $(0,1)^\intercal$ and $\Psi_\gl$ is a multiple of $y_\gl$.
\end{corollary}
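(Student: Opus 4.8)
The plan is to read the statement off from Lemma~\ref{Lemma:EigenSDEs} applied to the operator $\cS := M\cT_R M^{-1}$, keeping track of the scalar factor $2$ in $\tSch_\tau = 2\,M\cT_R M^{-1}$ and of the reparametrisation $z\mapsto z/2$ built into \eqref{Eq:ymuSch}.

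First I would note that, since $\tSch_\tau = 2\cS$ and $\cS$ is self-adjoint, a pair $(\gl,\Psi)$ is an eigenvalue / eigenvector of $\tSch_\tau$ if and only if $(\gl/2,\Psi)$ is an eigenvalue / eigenvector of $\cS$ (the eigenvalues are halved, the eigenvectors unchanged). By Lemma~\ref{Lemma:EigenSDEs}, applied with the boundary data $b_0 = M(1)b_1 = (0,1)^\intercal$ fixed in Subsection~\ref{Subsec:Sch}, this holds --- almost surely, simultaneously for all $\gl\in\R$ --- if and only if the solution $u_{\gl/2}$ of \eqref{eqdiffhgl} with $z=\gl/2$ and $u_{\gl/2}(0)=(0,1)^\intercal$ satisfies $u_{\gl/2}(1)\parallel (0,1)^\intercal$ and $\Psi$ is a multiple of $u_{\gl/2}$.

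Second I would identify $u_{\gl/2}$ with $y_\gl$. Multiplying \eqref{eqdiffhgl} with $z=\gl/2$ on the left by $J^{-1}=-J$ gives $du_{\gl/2} = -\tfrac{\gl}{2}\,Ju_{\gl/2}\,dt + J\,dV\,u_{\gl/2}$, which is precisely \eqref{Eq:ymuSch} with $z=\gl$; since both processes start from $(0,1)^\intercal$, uniqueness of solutions together with the joint-in-$z$ construction recalled after \eqref{eqdiffhgl} gives $u_{\gl/2}=y_\gl$ almost surely, for all $\gl$ at once. Substituting this identity into the condition of the previous paragraph yields the corollary.

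The argument is entirely routine, and I do not expect a genuine obstacle: the only two points to watch are the factor-$2$ bookkeeping (eigenvalues of $2\cS$ are twice those of $\cS$, eigenvectors identical) and the elementary It\^o algebra identifying \eqref{eqdiffhgl} at parameter $\gl/2$ with \eqref{Eq:ymuSch} at parameter $\gl$.
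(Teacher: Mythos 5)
Your proposal is correct and matches the paper's intent exactly: the paper states this corollary without a written proof, precisely because it is the combination of Lemma~\ref{Lemma:EigenSDEs} (with $b_0=M(1)b_1=(0,1)^\intercal$), the factor-$2$ relation $\tSch_\tau=2\cS$, and the identification $y_\gl=u_{\gl/2}$ that the paper makes explicit when introducing \eqref{Eq:ymuSch}. Your bookkeeping of the halved eigenvalues and the algebra identifying the two SDEs are both accurate, so nothing further is needed.
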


For any $z\in \C\backslash \R$, the resolvent writes:
\begin{equation}\label{Eq:ResolventDirac2}
(\tSch_\tau-z)^{-1} f(t) := \frac1{2} \int_0^1 \big(y_z(t) \hat{y}_z(s)^\intercal\, 1_{t \le s}(s) + \hat{y}_z(t) y_z(s)^\intercal \,1_{s < t}\big) f(s)\;,
\end{equation}
where $f \in L^2_I$, $y_z$ and $\hat{y}_z$ are solutions of \eqref{Eq:ymuSch} such that $y_z(0) = (0,1)^\intercal$, $\hat{y}_z(1) \parallel (0,1)^\intercal$ and $y_z(0)^\intercal J \hat{y}_z(0) = 1$. Note that $\hat{y}_z$ can be constructed by setting
\begin{equation}\label{Eq:hatyz} \hat{y}_z := v_z - \alpha y_z\;,\quad \alpha := \frac{\begin{pmatrix} 1 & 0\end{pmatrix} v_z(1)}{\begin{pmatrix} 1 & 0\end{pmatrix} y_z(1)}\;,\end{equation}
where $v_z$ is the solution of \eqref{Eq:ymuSch} that starts from $(1,0)^\intercal$ at time $0$.\\

Let us now introduce the polar coordinates, also called Pr\"ufer coordinates, associated to $y_\gl$ for any $\gl\in\R$ through the relation
$$ (y_\gl)_2 + i (y_\gl)_1 =: \Gamma_{\gl} e^{i \Theta_{\gl}}\;.$$
A simple computation shows that
\begin{equation}\label{Eq:Theta}\begin{split}
&d \Theta_{\gl}(t) = \frac{\gl}{2} dt + \frac{\sqrt \tau}{2} d\mathcal{B}(t) + \frac{\sqrt \tau}{2\sqrt 2} \Re(e^{2i \Theta_{\gl}(t)} d\cW(t))\,,\\
&d \ln \Gamma_{\gl}(t) = \frac{\tau}{8} dt  + \frac{\sqrt\tau}{2\sqrt 2} \Im(e^{2i \Theta_{\gl}(t)} d\cW(t))\,,
\end{split}
\end{equation}
where $\cW = (\cW_1 + i \cW_2)$ is a complex Brownian motion.

\begin{proof}[Proof of Theorem \ref{Th:Sch}]
It is a consequence of the material above, noticing that $\Theta_{\gl}(1) \in \pi\Z$ if and only if $y_\gl(1)$ is parallel to $(0, 1)^\intercal$.
\end{proof}

\begin{remark}
As we will see later on, almost surely for any $t\in [0,1]$ the phase $\gl \mapsto \Theta_{\gl}(t)$ is increasing. However $t \mapsto \lfloor \Theta_{\gl}(t)/\pi \rfloor$ is \emph{not} non-decreasing, while the phase associated to $1$-d Schr\"odinger operators satisfies this property, often called Sturm-Liouville property.
\end{remark}

\begin{remark}
We could have endowed the operator with other b.c. For instance, let $\tSch_{\tau,\ell}$ be defined similarly as $\tSch_\tau$ except that we impose $M(1)b_1 = (\sin \ell,\cos \ell)^\intercal$ for some $\ell \in [0,\pi)$. The eigenvalues of $\tSch_{\tau,\ell}$ are those $\lambda$ for which $\Theta_{\gl}(1)$ equals $\ell$ modulo $\pi$. We have the following scaling property of the family $\tSch_{\tau,\ell}$:
\begin{align*}
C_{-\ell} (\tSch_{\tau} + 2 \ell ) C^{-1}_{-\ell} \overset{(d)}{=} \tSch_{\tau,\ell}\,,
\end{align*}
where $C_{-\ell} := \mat{\cos(-\ell t)}{-\sin(-\ell t)}{\sin(-\ell t)}{\cos(-\ell t)}$.
Consequently the set of eigenvalues of $\tSch_{\tau,\ell}$ coincides in law with the point process $\Sch_\tau + 2\ell$. It is then easy to deduce that the point process $\Sch_\tau$ is invariant in law under translation by integer multiples of $2\pi$.
\end{remark}

We turn to the computation of the intensity measure of the point process of eigenvalues / eigenvectors of $\tSch_\tau$. We start with some preliminary results. First, we show that the number of points in $\Sch_\tau$ that fall in any given compact set has finite expectation.

\begin{lemma}\label{Lemma:Expect}
For any $\mu < \lambda$, we have $\E[\#\{ \Sch_\tau \cap [\mu,\lambda]\}] < \infty$.
\end{lemma}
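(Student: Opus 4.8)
The plan is to count eigenvalues of $\tSch_\tau$ in $[\mu,\lambda]$ via the monotonicity of the Prüfer phase $\Theta_\gl(1)$ in $\gl$, and to control the expected oscillation of this phase. By Theorem \ref{Th:Sch} (or rather Corollary \ref{Cor:Eigenvalues} and its proof), the eigenvalues in $[\mu,\lambda]$ are exactly the $\gl \in [\mu,\lambda]$ with $\Theta_\gl(1) \in \pi\Z$. First I would establish that $\gl \mapsto \Theta_\gl(1)$ is almost surely non-decreasing: differentiating the SDE \eqref{Eq:Theta} for $\Theta_\gl$ in the parameter $\gl$, one gets a linear SDE for $\partial_\gl \Theta_\gl$ of the form $d(\partial_\gl\Theta_\gl) = \frac12 dt + (\text{bounded It\^o terms})\,\partial_\gl\Theta_\gl$, whose solution is a product of an exponential (hence positive) factor and $\int_0^\cdot \frac12 e^{(\cdots)}\,ds > 0$; so $\partial_\gl\Theta_\gl(t) > 0$ for all $t$. (This is exactly the monotonicity asserted in the Remark following the proof of Theorem \ref{Th:Sch}.) Consequently the number of eigenvalues in $[\mu,\lambda]$ is at most $1 + \frac1\pi\big(\Theta_\lambda(1) - \Theta_\mu(1)\big)$.

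It therefore suffices to bound $\E[\Theta_\lambda(1) - \Theta_\mu(1)]$. From \eqref{Eq:Theta},
\begin{align*}
\Theta_\lambda(1) - \Theta_\mu(1) = \frac{\lambda-\mu}{2} + \frac{\sqrt\tau}{2\sqrt2}\int_0^1 \Re\big((e^{2i\Theta_\lambda(s)} - e^{2i\Theta_\mu(s)})\,d\cW(s)\big)\;.
\end{align*}
(The $d\cB$ terms cancel.) The stochastic integral is a martingale with zero expectation: its integrand $e^{2i\Theta_\lambda} - e^{2i\Theta_\mu}$ is bounded by $2$ in modulus, so the integral is an $L^2$ martingale, hence genuinely mean-zero. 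Thus $\E[\Theta_\lambda(1)-\Theta_\mu(1)] = \frac{\lambda-\mu}{2}$, and combining with the previous paragraph,
\begin{align*}
\E\big[\#\{\Sch_\tau \cap [\mu,\lambda]\}\big] \le 1 + \frac{\lambda-\mu}{2\pi} < \infty\;.
\end{align*}

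The only genuinely delicate point is the differentiability of $\gl \mapsto \Theta_\gl$ and the justification that $\partial_\gl\Theta_\gl$ solves the claimed linear SDE; this is a standard fact about smooth dependence of SDE solutions on a parameter (the coefficients in \eqref{Eq:Theta} are smooth in $\Theta$ with bounded derivatives), and the paper anyway asserts the monotonicity of $\gl\mapsto\Theta_\gl(t)$ elsewhere, so one may either invoke it or give the short SDE argument above. A minor alternative, avoiding differentiability entirely, is to note directly that for $\mu < \lambda$ the comparison of the two SDEs driven by the same Brownian motions gives $\Theta_\lambda(t) \ge \Theta_\mu(t)$ pathwise (again via a linear SDE for the difference), which is all that is needed for the counting bound; either way the expectation computation is unchanged. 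Everything else is routine.
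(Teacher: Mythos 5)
Your proof is correct and follows the same skeleton as the paper's: use the a.s.\ monotonicity of $\gl\mapsto\Theta_{\gl}(1)$ to bound the count by $1+(\Theta_{\gl}(1)-\Theta_{\mu}(1))/\pi$, then control the expectation of the phase increment. The one place you diverge is the estimate itself: the paper bounds $\E[|\Theta_{\gl}(1)-\Theta_{\mu}(1)|^2]$ via a Gr\"onwall argument on the second moment, whereas you observe that the martingale part has integrand bounded by $2$ in modulus, hence bounded quadratic variation, hence is a genuine mean-zero $L^2$ martingale; this gives the exact identity $\E[\Theta_{\gl}(1)-\Theta_{\mu}(1)]=(\lambda-\mu)/2$ and the clean bound $1+(\lambda-\mu)/(2\pi)$. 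Your version is shorter and sharper, and the Gr\"onwall step is indeed not needed for this lemma (it is the kind of estimate one needs for the Lipschitz-in-$\gl$ moment bounds in the tightness arguments of Section \ref{Sec:SDEs}, but not here, where a first-moment bound suffices). Your treatment of the monotonicity is also consistent with the paper: the lemma's proof invokes it without justification, and the paper only establishes it later, in the proof of Theorem \ref{Th:Intensity}, via the explicit positive formula \eqref{Eq:partialTheta} for $\partial_\gl\Theta_{\gl}$ --- which is exactly the derivative-SDE argument you sketch, so there is no circularity in either version.
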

\begin{proof}
Thanks to the characterization of the $\Sch_\tau$ point process \eqref{Characdiffusion:Schtau} and the monotonicity property of $\gl \mapsto \Theta_{\gl}(1)$, we have:
$$ \#\{ \Sch_\tau \cap [\mu,\lambda]\} \le \frac{\Theta_{\gl}(1) - \Theta_{\mu}(1)}{\pi} + 1\;,$$
hence it suffices to bound the expectation of $\Theta_{\gl}(1) - \Theta_{\mu}(1)$. For any $t\in [0,1]$, we have
$$ \Theta_{\gl}(t) - \Theta_{\mu}(t) = \frac{\lambda-\mu}{2}t + \frac{\sqrt \tau}{2\sqrt 2} \int_0^t \Re\big( (e^{2i \Theta_{\gl}(s)}-e^{2i \Theta_{\mu}(s)}) dW(s)\big)\;.$$
An elementary estimate on the bracket of the martingale, together with simple computations show that there exists a constant $C>0$ such that for all $t\in [0,1]$
$$ \E[|\Theta_{\gl}(t) - \Theta_{\mu}(t)|^2] \le C(1 + \int_0^t \E[|\Theta_{\gl}(s) - \Theta_{\mu}(s)|^2] ds)\;.$$
Gr\"onwall's Lemma then yields $\E[|\Theta_{\gl}(1) - \Theta_{\mu}(1)|^2] < \infty$ which suffices to conclude.
\end{proof}

Second we compute a change of measure, which is essentially the same as in~\cite[Proof of Lemma 3.6]{RV}.
\begin{lemma}\label{Lemma:Girsanov}
Fix $u\in [0,1]$ and let $\cB_1$ be a real Brownian motion on $[0,1]$ starting from $0$. Set $f^u(t) := (u - |u-t|)/2$ and $Y(t) := t \frac{\tau}{8} + \frac{\sqrt\tau}{2\sqrt 2} \cB_1(t)$ for $t\in [0,1]$. Then for any bounded measurable map $G$ on $\cC([0,1],\R)$ we have
$$\E\Big[G(Y) e^{(u-1) \frac{\tau}{4} + \frac{\sqrt\tau}{\sqrt 2} (\cB_1(u)-\cB_1(1))}\Big] = \E\Big[G\big(\frac{\tau}{4} f^u + \frac{\sqrt \tau}{2\sqrt 2} \cB_1\big) \Big]\;.$$
\end{lemma}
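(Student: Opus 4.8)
The plan is to recognize the exponential factor on the left-hand side as the Cameron--Martin (Girsanov) density of a deterministic shift of the Brownian path $\cB_1$, and then to verify that this same shift transforms the process $Y$ into the drifted process on the right-hand side.

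First I would pin down the shift. Looking for an absolutely continuous $h:[0,1]\to\R$ with $h(0)=0$ and $\int_0^1 (h'(s))^2\,ds<\infty$, the constraint that substituting $\cB_1+h$ for $\cB_1$ in $Y(t)=t\frac\tau8+\frac{\sqrt\tau}{2\sqrt2}\cB_1(t)$ should produce $\frac\tau4 f^u(t)+\frac{\sqrt\tau}{2\sqrt2}\cB_1(t)$ forces
\[ t\tfrac\tau8+\tfrac{\sqrt\tau}{2\sqrt2}h(t)=\tfrac\tau4 f^u(t),\qquad\text{hence}\qquad h(t)=\tfrac{\sqrt\tau}{2\sqrt2}\big(u-|u-t|-t\big). \]
Explicitly $h\equiv 0$ on $[0,u]$ and $h(t)=\frac{\sqrt\tau}{\sqrt2}(u-t)$ on $[u,1]$, so $h'=-\frac{\sqrt\tau}{\sqrt2}\,\mathbf 1_{(u,1)}$ is a bounded step function and $h$ indeed belongs to the Cameron--Martin space; the displayed algebraic identity is checked directly by treating $t\le u$ and $t\ge u$ separately.

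Next I would compute the Girsanov weight of this shift: since $h'$ is a step function the Wiener integral is just an increment, and
\[ \exp\Big(\int_0^1 h'(s)\,d\cB_1(s)-\tfrac12\int_0^1 (h'(s))^2\,ds\Big)=\exp\Big(\tfrac{\sqrt\tau}{\sqrt2}\big(\cB_1(u)-\cB_1(1)\big)-\tfrac\tau4(1-u)\Big), \]
which is precisely $e^{(u-1)\frac\tau4+\frac{\sqrt\tau}{\sqrt2}(\cB_1(u)-\cB_1(1))}$. Writing $Y=\Phi(\cB_1)$ for the continuous affine map $\Phi(\omega)(t):=t\frac\tau8+\frac{\sqrt\tau}{2\sqrt2}\omega(t)$ on $\cC([0,1],\R)$ and applying the Cameron--Martin theorem to the bounded measurable functional $\omega\mapsto G(\Phi(\omega))$, one obtains
\[ \E\Big[G(Y)\,e^{(u-1)\frac\tau4+\frac{\sqrt\tau}{\sqrt2}(\cB_1(u)-\cB_1(1))}\Big]=\E\big[G(\Phi(\cB_1+h))\big]=\E\Big[G\big(\tfrac\tau4 f^u+\tfrac{\sqrt\tau}{2\sqrt2}\cB_1\big)\Big], \]
the last step using $\Phi(\cB_1+h)(t)=t\frac\tau8+\frac{\sqrt\tau}{2\sqrt2}h(t)+\frac{\sqrt\tau}{2\sqrt2}\cB_1(t)$ together with the identity from the previous paragraph. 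This is exactly the asserted equality.

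I do not expect a genuine obstacle here: the statement is a direct instance of the Cameron--Martin theorem, and the only points demanding (entirely elementary) care are checking that $h$ lies in the Cameron--Martin space, which is immediate since $h'$ is bounded, and verifying the piecewise identity $t\frac\tau8+\frac{\sqrt\tau}{2\sqrt2}h(t)=\frac\tau4 f^u(t)$.
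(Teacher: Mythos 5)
Your proof is correct and rests on the same change-of-measure idea as the paper's: the exponential factor is exactly the Girsanov/Cameron--Martin density of the deterministic shift $h$ with $h'=-\frac{\sqrt\tau}{\sqrt2}\,\mathbf{1}_{(u,1)}$, and your verification of both the density and the identity $t\frac{\tau}{8}+\frac{\sqrt\tau}{2\sqrt2}h(t)=\frac{\tau}{4}f^u(t)$ is accurate. The only difference is presentational: you apply the Cameron--Martin theorem once, globally, with the piecewise-linear $h$, whereas the paper splits the path at time $u$ into two independent pieces and invokes Girsanov only on $[u,1]$, where the drift is nonzero.
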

\begin{proof}
Set $d\bbQ := e^{(u-1) \frac{\tau}{4} + \frac{\sqrt\tau}{\sqrt 2} (\cB_1(u)-\cB_1(1))} d\P$ and observe that $\bbQ$ is a probability measure: we want to show that the law of $Y$ under $\bbQ$ coincides with the law of $\frac{\tau}{4} f^u + \frac{\sqrt \tau}{2\sqrt 2} \cB_1$ under $\P$.

The law of $Y$ is characterized by the laws of $(Y(t), \; t \in [0,u])$ and $(\tilde Y(t) := Y(t) - Y(u),\; t \in [u,1])$ and observe that under $\bbQ$, the process $(Y(t))_{0\le t \le u}$ is independent of $(\tilde{Y}(t))_{u\le t \le 1}$. 

Consequently, it suffices to compute separately the laws of these two processes. Since the exponential change of measure is independent of $(\cB(t))_{0\le t \le u}$, it is immediate that $(Y(t))_{0\le t \le u}$ has the same law under $\bbQ$ and $\P$. And this is coherent with the fact that
$$ \frac{\tau}{4} f^u(t) + \frac{\sqrt \tau}{2\sqrt 2} \cB_1(t) = Y(t)\;,\quad t\in [0,u]\;.$$
On the other hand, Girsanov's Theorem~\cite[Th VIII.1.7]{RevuzYor} shows that under $\bbQ$, the process $(\cB_1(t) - \cB_1(u) + \frac{\sqrt\tau}{\sqrt 2}(t-u))_{u\le t \le 1}$ is a Brownian motion starting from $0$ at time $u$. Consequently, under $\bbQ$ the process $(\tilde{Y}(t))_{u\le t \le 1}$ has the same law as the process 
$$\frac{\tau}{4} (f^u(t)-f^u(u)) + \frac{\sqrt \tau}{2\sqrt 2} (\cB_1(t) - \cB_1(u))\;,\quad u\le t \le 1\;,$$
under $\P$. This completes the proof.
\end{proof}

We now proceed with the computation of the intensity measure.

\begin{proof}[Proof of Theorem \ref{Th:Intensity}]
Assume that
\begin{equation}\label{Eq:Intensity2}
\E\Big[\sum_{\gl: \Theta_{\gl}(1) \in \pi\Z} G(\gl, \ln \Gamma_{\gl},\Theta_{\gl})\Big]
= \int \sum_{n \in \Z} \E\Big[G(\gl, \frac{\sqrt \tau}{2\sqrt{2}}\mathcal{B}_1 + \frac{\tau f^{U}}{4},\beta^{(n\pi)})\Big] g_{\lambda, \frac32 \tau}(2n\pi) d\gl\;.
\end{equation}
The identity
$$ \Psi_{\lambda} = \frac{\Gamma_{\gl}}{\|\Gamma_{\gl}\|_{L^2([0,1],\R)}} \begin{pmatrix} \sin \Theta_{\gl} \\ \cos \Theta_{\gl} \end{pmatrix}\;,$$
shows that $\Psi_{\lambda}$ is the image through a continuous map of $(\ln \Gamma_{\gl},\Theta_{\gl})$. From \eqref{Eq:Intensity2} we thus deduce that
\begin{equation*}
\E\Big[\sum_{\gl: \Theta_{\gl}(1) \in \pi\Z} G(\gl, \Psi_{\lambda})\Big]
= \int \sum_{n \in \Z} \E\Big[G(\gl, \frac{e^{Z}}{\big\| e^{Z} \big\|_{L^2([0,1])}} \begin{pmatrix} \sin \beta^{(n\pi)} \\\cos \beta^{(n\pi)} \end{pmatrix})\Big] g_{\lambda, \frac32 \tau}(2n\pi) d\gl\;,
\end{equation*}
where $Z := \frac{\sqrt \tau}{2\sqrt{2}}\mathcal{B}_1 + \frac{\tau f^{U}}{4}$. Theorem \ref{Th:Intensity} then follows from the equality in law
$$ \frac{e^{Z(t)}}{\big\| e^{Z} \big\|_{L^2([0,1])}} \stackrel{(d)}{=}  \frac{e^{\frac{\sqrt \tau}{2\sqrt{2}}\mathcal{B}_1(t-U) - \frac{\tau}8 |t-U| }}{\big\| e^{\frac{\sqrt \tau}{2\sqrt{2}}\mathcal{B}_1(\cdot-U) - \frac{\tau}8 |\cdot - U| } \big\|_{L^2([0,1])}}\;.$$

\bigskip

We are left with the proof of \eqref{Eq:Intensity2}. Recall that the process $(\Theta_{\gl}(t),\Gamma_{\gl}(t); t\in [0,1],\gl\in \R)$ is continuous in both variables and satisfies the SDEs \eqref{Eq:Theta}. Note that $ \Theta_{\gl}$ is differentiable (in fact real-analytic) with respect to $\gl$ (see e.g. \cite[Chap. V, Theorem 40]{Protter2005} or Theorem 24 of \cite{KVV}) and its derivative $\gl \mapsto \Theta_{\gl}(t)$ satisfies the following SDE
\begin{align*}
d(\partial_\gl \Theta_{\gl}) = \frac{1}{2}\, dt - \frac{\sqrt \tau}{\sqrt 2} (\partial_\gl \Theta_{\gl})\; \Im(e^{2 i\Theta_{\gl}} d\cW_t)\;.
\end{align*}
An application of It\^o's formula yields
\begin{equation}\label{Eq:partialTheta}
\partial_\gl \Theta_{\gl}(t) = \frac{1}{2} \int_0^t \exp\big(2 \ln \Gamma_{\gl}(u) - 2 \ln \Gamma_{\gl}(t)\big) du\;.
\end{equation}
We thus deduce that almost surely for all $t\in [0,1]$ and all $\gl\in\R$, we have $\partial_\gl \Theta_{\gl}(t) > 0$. This implies that almost surely $\lambda\mapsto \Theta_{\gl}(1)$ is a $\cC^1$-diffeomorphism. In the sequel, we denote by $\theta\mapsto \lambda(\theta)$ its inverse.\\

By standard approximation arguments, it suffices to take $G$ non-negative, bounded and continuous, and such that $G(\lambda,\cdot) = 0$ whenever $\lambda \notin [-a,a]$ for some $a>0$. Note that
$$ \sum_{\gl: \Theta_{\gl}(1) \in \pi\Z} G(\gl, \ln \Gamma_{\gl},\Theta_{\gl}) = \sum_{\theta\in \pi\Z} G(\lambda(\theta), \ln \Gamma_{\gl(\theta)},\Theta_{\gl(\theta)})\;.$$
By continuity, the right hand side is the almost sure limit as $\epsilon\downarrow 0$ of
\begin{align*}
X_\eps &:= \frac{1}{2\eps}\int_{\theta \in [-\eps,\eps] + \pi \Z} G(\lambda(\theta), \ln \Gamma_{\gl(\theta)}, \Theta_{\gl(\theta)})d\theta \\
&= \frac{1}{2\eps}\int_{\gl \in \R} 1_{\{\Theta_\gl \in [-\eps,\eps] + \pi \Z\}} G(\lambda,  \ln \Gamma_{\gl}, \Theta_{\gl}) \partial_\gl \Theta_{\gl}(1) d\gl\;.
\end{align*}
Recall that $G$ is compactly supported in its first variable. Provided that $\epsilon < \pi$, we see that almost surely $|X_\eps| \le \|G\|_\infty (\#\{\lambda \in [-a,a]: \Theta_{\gl}(1) \in \pi\Z\} + 2)$. The latter r.v.~has finite expectation by Lemma \ref{Lemma:Expect}. The Dominated Convergence and Fubini Theorems thus yield
\begin{equation}\label{Eq:GTheta}\begin{split}
&\E[\sum_{\gl: \Theta_{\gl}(1) \in \pi \Z} G(\gl,\ln \Gamma_{\gl}, \Theta_{\gl})]\\
&= \lim_{\eps \to 0}\frac{1}{2\eps}\int_{\gl \in  \R} \E[1_{\{\Theta_{\gl}(1) \in [-\eps,\eps] + \pi \Z\}} G(\lambda,  \ln \Gamma_{\gl}, \Theta_{\gl}) \partial \Theta_{\gl}(1)] d\gl
\end{split}\end{equation}

Fix $\lambda \in \R$ and recall the definition of the SDEs \eqref{Eq:Theta} and \eqref{Eq:partialTheta}. Observe that $\int_0^t \Re(e^{2i \Theta_{\gl}} d\cW)$ and $\int_0^t \Im(e^{2i \Theta_{\gl}} d\cW)$ are independent Brownian motions. Since $\cW$ is independent of $\cB$, we deduce that the process $\Theta_{\gl}$ is independent of $(\ln \Gamma_{\gl}, \partial_\lambda \Theta_{\lambda})$.\\
We now provide some identities in law on the process $\Theta_{\gl}$. First, the r.v.~$\Theta_{\gl}(1)$ has a Gaussian law, centered at $\lambda/2$ with variance $3\tau /8$. Moreover the process $\Theta_{\gl}$ conditioned to $\{\Theta_{\gl}(1) =x\}$ has the same law as $\beta^{(x)}$ where, for some Brownian motion $\cB_2$,
$$ d\beta^{(x)}(t) = x dt + \sqrt{\frac{3\tau}{8}} (d\cB_2(t) - \cB_2(1) dt)\;.$$
The process $\beta^{(x)}$ is a scaled Brownian bridge from $(0,0)$ to $(1,x)$ (note that the law of $\beta^{(x)}$ no longer depends on $\gl$).\\

Desintegrating the expectation appearing on the r.h.s.~of \eqref{Eq:GTheta} according to the law of $\Theta_{\gl}(1)$ we get
\begin{align*}
&\E[1_{\{\Theta_{\gl}(1) \in [-\eps,\eps] + \pi \Z\}} G(\lambda,  \ln \Gamma_{\gl}, \Theta_{\gl}) \partial \Theta_{\gl}(1)]\\
&= \int_{x\in\R} 1_{\{x\in [-\eps,\eps] + \pi\Z\}} \E\Big[G(\lambda,  \ln \Gamma_{\gl}, \beta^{(x)}) \partial \Theta_{\gl}(1)\Big] g_{\lambda/2, \frac38 \tau}(x)dx\;.
\end{align*}
Note that $g_{\lambda/2, \frac38 \tau}(x) = 2g_{\lambda, \frac32 \tau}(2x)$. Applying again the Dominated Convergence Theorem, we obtain
\begin{align*}
&\E[\sum_{\gl: \Theta_{\gl}(1) \in \pi \Z} G(\gl,\ln \Gamma_{\gl}, \Theta_{\gl})]\\
&= \int_{\gl \in  \R} \lim_{\eps \to 0}\frac{1}{2\eps} \int_{x\in\R} 1_{\{x\in [-\eps,\eps] + \pi\Z\}} 2\,\E\Big[G(\lambda,  \ln \Gamma_{\gl}, \beta^{(x)}) \partial \Theta_{\gl}(1)\Big] g_{\lambda, \frac32 \tau}(2x)dx d\gl\\
&=\int_{\gl \in  \R} \sum_{n\in\Z} 2\,\E\Big[G(\lambda,  \ln \Gamma_{\gl}, \beta^{(n\pi)})\partial \Theta_{\gl}(1)\Big] g_{\lambda, \frac32 \tau}(2n\pi) d\gl\;.
\end{align*}

It remains to compute the expectation that appears on the r.h.s. Recall that $\beta^{(n\pi)}$ is independent of the pair $(\ln \Gamma_{\gl}, \partial_\lambda \Theta_{\lambda})$. We already saw that $\cB_1(t) := \int_0^t \Im(e^{2i \Theta_{\gl}} d\cW)$ is a standard Brownian motion. Consequently
$$ \ln \Gamma_{\gl}(t) = t \frac{\tau}{8} + \frac{\sqrt \tau}{2\sqrt 2} \cB_1(t) =: Y(t)\;.$$
Using the explicit expression \eqref{Eq:partialTheta} of $\partial_\lambda \Theta_{\lambda}$ in terms of $\ln \Gamma_{\gl}$, we obtain
\begin{align*}
&2\, \E\Big[G(\lambda,  \ln \Gamma_{\gl}, \beta^{(n\pi)})\partial \Theta_{\gl}(1)\Big]= \int_0^1 \E\Big[G(\lambda,  Y , \beta^{(n\pi)}) e^{(u-1) \frac{\tau}{4} + \frac{\sqrt\tau}{\sqrt 2} (\cB_1(u)-\cB_1(1))}\Big] du\;.
\end{align*}
Lemma \ref{Lemma:Girsanov} allows to conclude.
\end{proof}

\section{Convergence of the operators}\label{sec:CVoperator}

We start this section with a detailed presentation of the unitary map that allows to construct $\tSchE$ from $\cH^{(E)}$ and with the proofs of claims on the eigenvectors made in the introduction. Then, we deal with the operator convergence of $\tSchE$ towards $\tSch_\tau$ and thus prove Theorem \ref{Th:CVoperator}. In the first two subsections, no assumption is made on the value of $E$, while in the subsequent subsections we always assume that $E \sim L/\tau$ for some fixed $\tau > 0$.

\subsection{The unitary transformation}

The operator $\cH_L$ is a generalized Sturm-Liouville operator. Its domain is made of (random) $H^1([0,L])$-functions that satisfy Dirichlet b.c.: namely, 
\begin{align*}
\cD(\cH_L) := \Big\{f \in L^2([0,L]):& \;f(0) = f(L) = 0,\; f \mbox{ A.C.}, \; f'- Bf  \mbox{ A.C.},\\
&\mbox{and } -(f'-Bf)' - Bf' \in L^2([0,L])\Big\}\,.
\end{align*}

Recall from the introduction that we consider the recentered operator $(L/\sqrt E)(\cH_L - E) + 2\ell_E$, and that we conjugate it with the rescaling map $g\mapsto g(L \,\cdot )$ that goes from $L^2([0,L],\R)$ into $L^2([0,1],\R)$. This yields the operator
\begin{equation}\label{Eq:HE}
{\cH}^{(E)} f := -\frac1{L\sqrt E} f'' + \sqrt{\frac{L}{E}} dB^{(L)} f + (2\ell_E - L\sqrt E) f\;,\quad x\in [0,1]\;,
\end{equation}
whose domain is the image through the rescaling map of the domain of $\cH_L$. In particular any element $f$ of this domain belongs to $H^1([0,1])$ and satisfies Dirichlet b.c.: $(f(0),f'(0))$ and $(f(1),f'(1))$ are parallel to $(0,1)$.\\

We now recall the two steps presented in the introduction on the construction of $\tSchE$.\\

\emph{Step 1: From $\R$ to $\R^2$.} 
Recall from the introduction the matrix $T$ and the map $\iota: L^2((0,1),\R) \to L^2_T((0,1),\R^2)$ that sends any $f\in L^2((0,1),\R)$ on its (canonical) equivalent class in $L^2_T((0,1),\R^2)$. Since $\iota$ is unitary, the operator
$$ \HH^{(E)} := \iota {\cH}^{(E)} \iota^{-1}\;.$$
is a self-adjoint operator on $L^2_T((0,1),\R^2)$ with domain $\cD(\HH^{(E)}) := \iota (\cD({\cH}^{(E)}))$. It is equivalent to ${\cH}^{(E)}$: the eigenvalues and normalized eigenvectors of $\HH^{(E)}$ are given by $(\lambda, \iota {\varphi}^{(E)}_\gl)$ where $(\lambda, {\varphi}^{(E)}_\gl)$ are the eigenvalues and normalized eigenvectors of $\cH^{(E)}$.\\
Any element $f$ of $L^2_T$ is an equivalent class of $L^2_I$: its second coordinate $f_2$ is ``arbitrary''. However, in our context there is a convenient representative given by $Pf := (f_1,\frac1{L\sqrt E} f_1')$. Note that $Pf \in L^2_I$ provided $f_1 \in H^1((0,1),\R)$; this holds in particular for any $f\in \cD(\HH^{(E)})$. Then, any $f\in \cD(\HH^{(E)})$ satisfies Dirichlet b.c.~in the following sense: $Pf(0)$ and $Pf(1)$ are parallel to $(0,1)^\intercal$.

Our next lemma establishes the relationship between $\HH^{(E)}$ and the collection of SDEs \eqref{SDEs:u}.
\begin{lemma}\label{Lemma:RR2}
Almost surely the eigenvalues of $\HH^{(E)}$ are those $\gl$ for which the solution $(u_\gl^{(E)}, (u_\gl^{(E)})')$ of \eqref{SDEs:u} is parallel to $(0,1)^\intercal$ at time $1$. In that case the associated eigenvector $\bs{\varphi}^{(E)}_\gl$ is a multiple of the equivalence class associated to $u_\gl^{(E)}$ and we have $P \bs{\varphi}^{(E)}_\gl = (\varphi^{(E)}_\gl, \frac1{L\sqrt E} (\varphi^{(E)}_\gl)')^\intercal$.
\end{lemma}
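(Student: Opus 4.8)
\textbf{Proof plan for Lemma \ref{Lemma:RR2}.}

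The plan is to trace through the two unitary conjugations that define $\HH^{(E)}$ and to check that the eigenvalue equation for $\HH^{(E)}$ is precisely the rewriting \eqref{SDEs:u} of the Sturm--Liouville equation \eqref{Eq:uE}. First I would recall that, by the standard Sturm--Liouville theory applied to $\cH^{(E)}$ (equivalently, rescaling the Sturm--Liouville theory for $\cH_L$), a real number $\gl$ is an eigenvalue of $\cH^{(E)}$ if and only if the solution $u_\gl^{(E)}$ of \eqref{Eq:uE} with initial data $u_\gl^{(E)}(0)=0$, $(u_\gl^{(E)})'(0)=L\sqrt E$ satisfies $u_\gl^{(E)}(1)=0$, and in that case the normalized eigenvector $\varphi_\gl^{(E)}$ is the unique unit-norm multiple of $u_\gl^{(E)}$. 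Since $\iota$ is unitary, the eigenvalues of $\HH^{(E)}=\iota\cH^{(E)}\iota^{-1}$ are the same, and the associated normalized eigenvector is $\bs{\varphi}^{(E)}_\gl=\iota\varphi_\gl^{(E)}$, i.e.\ the equivalence class in $L^2_T$ of $\varphi_\gl^{(E)}$; in particular it is a multiple of the class of $u_\gl^{(E)}$. This already gives the first and second assertions, and the displayed identity $P\bs{\varphi}^{(E)}_\gl=(\varphi^{(E)}_\gl,\frac1{L\sqrt E}(\varphi^{(E)}_\gl)')^\intercal$ is immediate from the definition $Pf=(f_1,\frac1{L\sqrt E}f_1')$ applied to the representative $\varphi_\gl^{(E)}$ of the class.

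Next I would verify that the condition ``$u_\gl^{(E)}(1)=0$'' is equivalent to ``$(u_\gl^{(E)}(1),(u_\gl^{(E)})'(1))$ is parallel to $(0,1)^\intercal$'', which is just a reformulation: a vector $(a,b)^\intercal$ is parallel to $(0,1)^\intercal$ iff $a=0$. Likewise the initial condition: $(u_\gl^{(E)}(0),(u_\gl^{(E)})'(0))=(0,L\sqrt E)$ is parallel to $(0,1)^\intercal$. So the pair $(u_\gl^{(E)},(u_\gl^{(E)})')$ appearing in the statement is exactly the vector-valued lift of the scalar solution, and I must only check that \eqref{SDEs:u} is an equivalent rewriting of \eqref{Eq:uE}. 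Writing out the matrix product on the left-hand side of \eqref{SDEs:u}, the first row reads
\begin{equation*}
\Big(\sqrt{\tfrac{L}{E}}\,dB^{(L)}+2\ell_E-L\sqrt E\Big)u_\gl^{(E)}-\partial_t\Big(\tfrac1{L\sqrt E}(u_\gl^{(E)})'\Big)=-\tfrac1{L\sqrt E}(u_\gl^{(E)})''+\sqrt{\tfrac{L}{E}}u_\gl^{(E)}dB^{(L)}+(2\ell_E-L\sqrt E)u_\gl^{(E)},
\end{equation*}
which equals $\gl\,u_\gl^{(E)}=\gl\,(T(u_\gl^{(E)},\frac1{L\sqrt E}(u_\gl^{(E)})')^\intercal)_1$ precisely when \eqref{Eq:uE} holds, while the second row reads $\partial_t u_\gl^{(E)}-\frac1{L\sqrt E}(u_\gl^{(E)})'=0$, which is an identity (the definition of the second coordinate) and matches the second coordinate of $\gl\,T(\cdots)=(\gl u_\gl^{(E)},0)^\intercal$. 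Hence \eqref{SDEs:u} holds if and only if \eqref{Eq:uE} holds. Combining this equivalence with the Sturm--Liouville characterization recalled above finishes the proof.

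The only point requiring a little care — and the one I would flag as the main (mild) obstacle — is the correct handling of the equivalence classes in $L^2_T$: the eigenvector $\bs{\varphi}^{(E)}_\gl$ is only defined up to modification of its second coordinate on a null set, so the statement ``$\bs{\varphi}^{(E)}_\gl$ is a multiple of the class associated to $u_\gl^{(E)}$'' must be read at the level of classes, and the identity $P\bs{\varphi}^{(E)}_\gl=(\varphi^{(E)}_\gl,\frac1{L\sqrt E}(\varphi^{(E)}_\gl)')^\intercal$ must be understood as: $P$ applied to (any representative of) the class picks out the canonical representative whose second coordinate is $\frac1{L\sqrt E}$ times the derivative of the first; this is well defined because all representatives share the same first coordinate $\varphi^{(E)}_\gl\in H^1$. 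One should also note that the differentiations above are to be understood in the weak ($H^1$) sense, consistent with the description of $\cD(\cH_L)$; no It\^o calculus is needed here since $u_\gl^{(E)}$ is the (classical/weak) solution of the Sturm--Liouville ODE, the term $dB^{(L)}u_\gl^{(E)}$ being interpreted via integration by parts exactly as in the definition of $\cD(\cH_L)$. Everything else is a routine unwinding of definitions, and the ``almost surely'' quantifier simply records that all these statements hold on the full-measure event on which $B$ is continuous and the operator $\cH_L$ is the self-adjoint Sturm--Liouville operator of \cite{Fukushima}.
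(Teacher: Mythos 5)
Your proof is correct and follows essentially the same route as the paper's: reduce to the Sturm--Liouville characterization of the eigenvalues of $\cH^{(E)}$, transport it through the unitary map $\iota$, and observe that \eqref{SDEs:u} is just the vector rewriting of \eqref{Eq:uE} (the paper asserts this rewriting in the introduction and does not re-derive it inside the proof, whereas you check it explicitly, which is fine). One transcription slip: the second row of \eqref{SDEs:u} reads $\partial_t u_\gl^{(E)} - L\sqrt E\cdot\tfrac{1}{L\sqrt E}(u_\gl^{(E)})' = 0$, not $\partial_t u_\gl^{(E)}-\tfrac1{L\sqrt E}(u_\gl^{(E)})'=0$; your conclusion that it is the tautological identity defining the second coordinate is unaffected.
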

\begin{proof}
$\gl$ is an eigenvalue of $\HH_L^{(E)}$ if and only if $\gl$ is an eigenvalue of $\cH^{(E)}$ if and only if the solution $u^{(E)}_\gl$ of \eqref{Eq:uE} vanishes at time $1$. The latter is equivalent with: the solution $(u_\gl^{(E)}, (u_\gl^{(E)})')$ of \eqref{SDEs:u} is parallel to $(0,1)^\intercal$ at time $1$. The theory of Sturm-Liouville operators shows that the eigenvector $\varphi^{(E)}_\gl$ of $\cH^{(E)}$ is then a multiple of $u^{(E)}_\gl$. Since $\bs{\varphi}^{(E)}_\gl = \iota \varphi^{(E)}_\gl$ we deduce that it is a multiple of the equivalence class of $(u_\gl^{(E)}, (u_\gl^{(E)})')$. This implies that $P \bs{\varphi}^{(E)}_\gl = (\varphi^{(E)}_\gl, \frac1{L\sqrt E} (\varphi^{(E)}_\gl)')^\intercal$.
\end{proof}

\emph{Step 2: Unrotate.} Recall $C_{E'}$ defined in \eqref{def:CE'} and note that $C_{E'}^{-1} = C_{E'}^\intercal$. We set $R_{E'} := C_{E'} T C_{E'}^\intercal$. The evolving rotation matrix $C_{E'}$ can be viewed as a unitary map from $L^2_T$ into $L^2_{R_{E'}}$. Indeed, for any $f\in L^2_T$ and for any representative $g\in L^2_I$ of $f$ we define $C_{E'} f$ as the equivalent class of $C_{E'} g$. It is simple to check that this definition is independent of the choice of the representative.\\
We then introduce the operator
$$ \tSchE := C_{E'} {\HH}^{(E)} C_{E'}^{-1}\;,$$
acting on the domain $\cD(\tSchE) := C_{E'} \cD({\HH}^{(E)}) \subset L^2_{R_{E'}}([0,1],\R^2)$. Recall that $E'$ is taken equal to $L\sqrt{E} - \ell_E$: the order $1$ correction $\ell_E = \{L\sqrt E\}_\pi$ has been chosen in such a way that any element $f \in \cD(\tSchE)$ satisfies Dirichlet b.c., that is, $P_{E'} f(0)$ and $P_{E'} f(1)$ are parallel to $(0,1)^\intercal$ where $P_{E'} := C_{E'} P C_{E'}^{-1}$.\\
Note that the eigenvalues and normalized eigenvectors of $\tSchE$ are given by $(\lambda, C_{E'}\iota {\varphi}^{(E)}_\gl)$. Our next result connects $\tSchE$ with the collection of processes $(y^{(E)}_z,z\in \C)$ (this is the same equation as in \eqref{Eq:yE}, except it is written here for a complex $z$)
\begin{equation}\label{Eq:yEz}\begin{split}
&\bigg(\begin{pmatrix} 0 & -\partial_t\\ \partial_t & 0\end{pmatrix} + \frac12 {\sqrt{\frac{L}{E}}} \begin{pmatrix} dB^{(L)} + \frac1{\sqrt 2} dW_1^{(L)} & \frac1{\sqrt 2} dW_2^{(L)}\\ \frac1{\sqrt 2} dW_2^{(L)} & dB^{(L)} - \frac1{\sqrt 2} dW_1^{(L)}\end{pmatrix} + \ell_E (2R_{E'}-I)\bigg) y_z^{(E)}\\
&= z R_{E'}y_z^{(E)} \;,\end{split}\end{equation}
with initial condition $y_z^{(E)}(0) = (0,1)^\intercal$.

\begin{lemma}\label{Lemma:Unitary}
The eigenvalues of $\tSchE$ are those $\gl \in\R$ for which the solution $y_\gl^{(E)}$ of \eqref{Eq:yEz} is parallel to $(0,1)^\intercal$ at time $1$. The corresponding normalized eigenvector $\psi^{(E)}_\gl$ is then a multiple of the equivalence class in $L^2_{R_{E'}}$ of $y_\gl^{(E)}$, and we have (recall \eqref{Eq:PsiE})
\begin{equation*}P_{E'} \psi^{(E)}_\gl = C_{E'} \begin{pmatrix} {\varphi}^{(E)}_\gl \\ \frac1{L\sqrt E} ({\varphi}^{(E)}_\gl)' \end{pmatrix} = \Psi_\gl^{(E)}\;.\end{equation*}
\end{lemma}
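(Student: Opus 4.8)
The plan is to combine Lemma \ref{Lemma:RR2} (which handles Step 1) with the fact that $C_{E'}$ is a unitary equivalence between $\HH^{(E)}$ and $\tSchE$, and then to track how the two operations transform the eigenvector SDEs. First I would recall that, by construction, $\tSchE = C_{E'} \HH^{(E)} C_{E'}^{-1}$ with $\HH^{(E)} = \iota \cH^{(E)} \iota^{-1}$, so $\gl$ is an eigenvalue of $\tSchE$ iff it is one of $\HH^{(E)}$ iff (by Lemma \ref{Lemma:RR2}) the solution $(u_\gl^{(E)}, (u_\gl^{(E)})')$ of \eqref{SDEs:u} is parallel to $(0,1)^\intercal$ at time $1$; moreover the normalized eigenvector of $\tSchE$ is $C_{E'}$ applied to the (normalized) eigenvector $\bs{\varphi}^{(E)}_\gl$ of $\HH^{(E)}$, hence a multiple of the equivalence class in $L^2_{R_{E'}}$ of $C_{E'}(u_\gl^{(E)}, (u_\gl^{(E)})'/(L\sqrt E))^\intercal = y_\gl^{(E)}$.

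Next I would verify that the eigenvalue condition can be restated directly in terms of $y_\gl^{(E)}$: since $C_{E'}(1)$ is a rotation, and since by the computation sketched around \eqref{Eq:yE} (valid for real $z=\gl$) $y_\gl^{(E)}$ solves \eqref{Eq:yEz}, the vector $(u_\gl^{(E)}(1),(u_\gl^{(E)})'(1)/(L\sqrt E))^\intercal$ is parallel to $(0,1)^\intercal$ iff $y_\gl^{(E)}(1) = C_{E'}(1)(u_\gl^{(E)}(1),(u_\gl^{(E)})'(1)/(L\sqrt E))^\intercal$ is parallel to $C_{E'}(1)(0,1)^\intercal$; and the choice $E' = L\sqrt E - \ell_E$ with $\ell_E = \{L\sqrt E\}_\pi$ was made precisely so that $C_{E'}(1)(0,1)^\intercal$ is again parallel to $(0,1)^\intercal$ (this is the ``Dirichlet-b.c.-preserving'' point noted in the footnote to Step 2). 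This gives the first assertion and identifies the eigenvector as the class of $y_\gl^{(E)}$.

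For the last identity, I would apply $P_{E'} = C_{E'} P C_{E'}^{-1}$ to $\psi^{(E)}_\gl$. Since $\psi^{(E)}_\gl$ is the class of $y_\gl^{(E)} = C_{E'}\bs{w}$ where $\bs{w}$ is (a representative of) $\bs{\varphi}^{(E)}_\gl$, we get $P_{E'}\psi^{(E)}_\gl = C_{E'} P C_{E'}^{-1} C_{E'} \bs{w} = C_{E'} P \bs{w}$, and $P\bs{w} = (\varphi^{(E)}_\gl, (\varphi^{(E)}_\gl)'/(L\sqrt E))^\intercal$ by Lemma \ref{Lemma:RR2}; hence $P_{E'}\psi^{(E)}_\gl = C_{E'}(\varphi^{(E)}_\gl,(\varphi^{(E)}_\gl)'/(L\sqrt E))^\intercal = \Psi^{(E)}_\gl$ by the definition \eqref{Eq:PsiE}. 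One should also check that the normalization is consistent: $C_{E'}$ is a unitary from $L^2_T$ to $L^2_{R_{E'}}$, so the $L^2_{R_{E'}}$-norm of $\psi^{(E)}_\gl$ equals the $L^2_T$-norm of $\bs{\varphi}^{(E)}_\gl$, which is $1$; and what appears in Theorem \ref{Th:Joint2} is the $L^2_I$-normalization of $\Psi^{(E)}_\gl$, which is a separate (harmless) renormalization one records here. The only mildly delicate point — the ``main obstacle'' such as it is — is making the manipulation of equivalence classes in the quotient spaces $L^2_T$, $L^2_{R_{E'}}$ fully rigorous: one must check that $P$ and $P_{E'}$ are well defined on the relevant dense domains (they are, since $f_1 \in H^1$ for $f$ in these domains), that $C_{E'}$ indeed intertwines them, and that parallelism to $(0,1)^\intercal$ at the endpoints is a well-posed condition on a class via the canonical representative — all of which follows from the set-up in Step 1 and Step 2 but should be spelled out.
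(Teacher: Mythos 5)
Your proposal is correct and follows essentially the same route as the paper: reduce to $\HH^{(E)}$ by unitary equivalence, invoke Lemma \ref{Lemma:RR2}, transfer the boundary condition through $C_{E'}(1)$ (which is $\pm I$ by the choice $E'=L\sqrt E-\ell_E$), and compute $P_{E'}\psi^{(E)}_\gl=C_{E'}P\bs{\varphi}^{(E)}_\gl$. The paper's proof is terser; your added remarks on the Dirichlet-preserving rotation and the normalization are accurate elaborations of steps the paper leaves implicit.
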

\begin{proof}
By definition of $\tSchE$, $\gl$ is an eigenvalue of $\tSchE$ if and only if $\gl$ is an eigenvalue of $\HH^{(E)}$. By the previous lemma, this is equivalent with: $(u_\gl^{(E)},(u_\gl^{(E)})')$ of \eqref{SDEs:u} is parallel to $(0,1)$ at time $1$. But since $y_\gl^{(E)} = C_{E'} ( u_\gl^{(E)}, \frac1{L\sqrt E} (u_\gl^{(E)})')^\intercal$, we deduce that it is in turn equivalent to: $y_\gl^{(E)}$ of \eqref{Eq:yE} is parallel to $(0,1)^\intercal$ at time $1$. In that case, since $\psi^{(E)}_\gl = C_{E'} \bs{\varphi}^{(E)}_\gl$ and given the previous lemma, we deduce that $\psi^{(E)}_\gl$ is a multiple of the equivalence class of $y_\gl^{(E)}$. The last identity then easily follows.
\end{proof}

\begin{remark}
One can show that for any $f$ in the domain of $\tSchE$, the function $g:= \tSchE f$ satisfies the equation:
\begin{equation}\label{Eq:SchE}\begin{split} &\bigg(2\begin{pmatrix} 0 & -\partial_t\\ \partial_t & 0\end{pmatrix} + {\sqrt{\frac{L}{E}}} \begin{pmatrix} dB^{(L)} + \frac1{\sqrt 2} dW_1^{(L)} & \frac1{\sqrt 2} dW_2^{(L)}\\ \frac1{\sqrt 2} dW_2^{(L)} & dB^{(L)} - \frac1{\sqrt 2} dW_1^{(L)}\end{pmatrix} + 2\ell_E (2R_{E'}-I)\bigg) P_{E'}f\\
&= 2R_{E'} g\;.
\end{split}
\end{equation}
Note the similarity with \eqref{DefopSch}.
\end{remark}

\begin{remark}
In the introduction, we mentioned that the space generated by the family $\{\Psi_\gl^{(E)}\}_\gl$ is not dense in $L^2_I$. Indeed, if we set $f:= C_{E'} (0,1)^\intercal$, we observe that
$$ \langle f , \Psi_\gl^{(E)}\rangle_{L^2_I} = \langle (0,1)^\intercal , ( \varphi_\gl^{(E)} , \frac1{L\sqrt E} (\varphi_\gl^{(E)})' )^\intercal \rangle_{L^2_I} = \int \frac1{L\sqrt E} (\varphi_\gl^{(E)})' = 0\;.$$
\end{remark}

\subsection{The resolvents and the precise statement of Theorem \ref{Th:CVoperator}}\label{Subsec:PreciseResolv}

Our goal is now to prove convergence of the resolvents of $\tSchE$ to those of $\tSch_\tau$. While the resolvents of the latter are operators on $L^2_I$, the resolvents of $\tSchE$ are defined on the quotient space $L^2_{R_{E'}}$: our first task is to extend these resolvents into well-defined, bounded operators on $L^2_I$.

\medskip

Of course, such an extension is \emph{far} from being unique. We will see that the extension that we opt for is related to the functions $\{\Psi^{(E)}_\gl\}_\gl$: in light of the statement of Theorem \ref{Th:Joint2}, this justifies a posteriori our choice.

\medskip

Fix $z \in \C \backslash \R$. For any $g \in L^2_{I}$, let $\dot{g}$ be its equivalent class in $L^2_{R_{E'}}$ and set
$$ \overline{(\tSchE - z)^{-1}} g := P_{E'} (\tSchE - z)^{-1} \dot{g}\;.$$
\begin{figure}[h!]
     \includegraphics[width = 6cm]{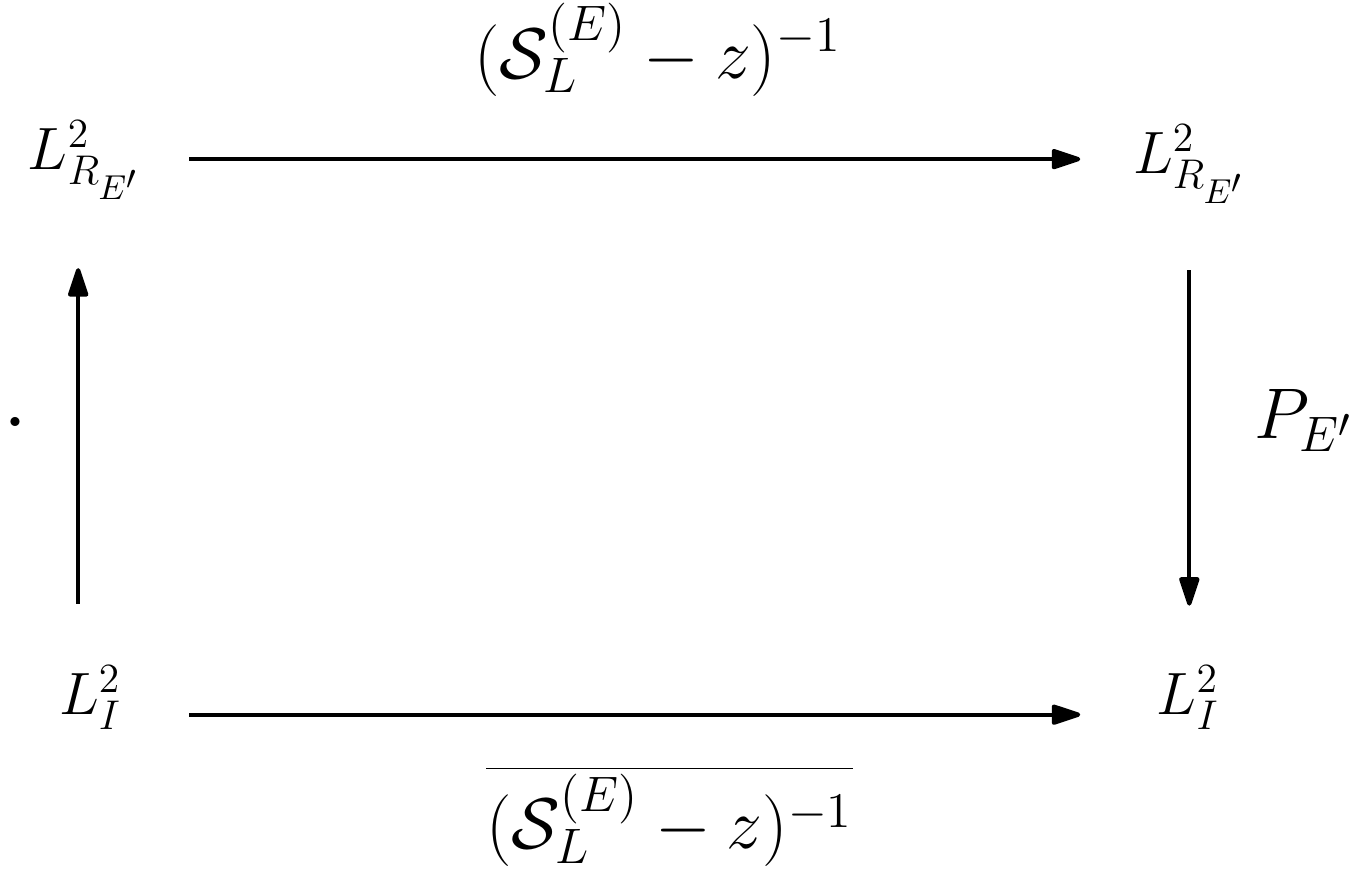}
     \caption{Extension of $(\tSchE-z)^{-1}$.}\label{Fig:Extension}
\end{figure}
\begin{remark}
The overline in the notation $\overline{(\tSchE - z)^{-1}}$ should not be confused with the complex conjugate. We use it to indicate that $\overline{(\tSchE - z)^{-1}}$ is an ``extension'' of the initial operator $(\tSchE - z)^{-1}$ to the space $L^2_I$.
\end{remark}
This definition is illustrated on Figure \ref{Fig:Extension}. We have composed $(\tSchE - z)^{-1}$ to the right with the (canonical) projection from $L^2_{I}$ to $L^2_{R_{E'}}$: this is clearly the only reasonable operation to apply here. On the other hand, we have composed it to the left with the densely defined injection $P_{E'}$ from $L^2_{R_{E'}}$ into $L^2_{I}$ and this may seem arbitrary. However note that for any eigenvalue $\gl$ of $\tSchE$ we find
$$ \overline{(\tSchE - z)^{-1}} \Psi^{(E)}_\gl = (\lambda-z)^{-1} \Psi^{(E)}_\gl\;,$$
thus justifying a posteriori the composition with $P_{E'}$.

Our next proposition provides explicitly the kernel of $\overline{(\tSchE - z)^{-1}}$. Let $v^{(E)}_z$ be the solution of \eqref{Eq:yEz} starting from $v_z^{(E)}(0) = (1,0)^\intercal$. Note that $y^{(E)}_z$ and $v^{(E)}_z$ are respectively the Dirichlet and Neumann solutions of \eqref{Eq:yEz}, and that these functions live in $L^2([0,1], \C^2)$. We then set
\begin{equation}\label{Eq:hatyzE} \hat{y}^{(E)}_z = v^{(E)}_z - \alpha^{(E)}\, y^{(E)}_z  \;,\quad \alpha^{(E)} := \frac{\begin{pmatrix} 1 & 0\end{pmatrix} v^{(E)}_z(1)}{\begin{pmatrix} 1 & 0\end{pmatrix} y^{(E)}_z(1)}\;.\end{equation}
Note that $\hat{y}^{(E)}_z$ is a solution of \eqref{Eq:yEz} which is parallel to $(0,1)^\intercal$ at time $1$ (but not at time $0$), and whose Wronskian with $y^{(E)}_z$ equals $1$.
\begin{proposition}
For any $g\in L^2_I$, we have
\begin{equation}\label{Eq:ResolvE}\begin{split}
\overline{(\tSchE - z)^{-1}} g(t) = \int \Big( \hat{y}_z^{(E)}(t)  y^{(E)}_z(s)^\intercal \un_{s\le t} +  {y}^{(E)}_z(t)  \hat{y}_z^{(E)}(s)^\intercal \un_{s > t} \Big) R_{E'}(s) \;g(s) ds\;.
\end{split}\end{equation}
As a consequence, there is a constant $C>0$ independent of $E$ s.t.~the operator norm satisfies
$$ \| \overline{(\tSchE - z)^{-1}}  \| \le C  \| y^{(E)}_z \|_\infty \| \hat{y}_z^{(E)}\|_\infty\;.$$
\end{proposition}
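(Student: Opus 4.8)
The plan is to mimic the proof of the resolvent formula for $\cS$ (equation~\eqref{Eq:ResolventDirac}) in the present quotient-space setting, and then read off the operator-norm bound from the explicit kernel. First I would recall that $\tSchE = C_{E'}\HH^{(E)}C_{E'}^{-1} = C_{E'}\iota\,\cH^{(E)}\iota^{-1}C_{E'}^{-1}$, and that by Step 1 and Step 2 the eigenvalue problem for $\tSchE$ is governed by~\eqref{Eq:yEz}: writing $\cT_{R_{E'}} := R_{E'}^{-1}J\partial_t$ (a canonical system operator, since $R_{E'} = C_{E'}TC_{E'}^\intercal$ is — after absorbing the bounded lower-order terms of~\eqref{Eq:yEz} into a Dirac-type perturbation exactly as in Subsection~\ref{subsec:canonicalsyst} — of the right form), one has $\tSchE = M^{(E)}\cT_{R_{E'}}(M^{(E)})^{-1}$ for the appropriate fundamental solution matrix $M^{(E)}$ of the $z=0$ equation. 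Then the resolvent formula~\eqref{Eq:ResolventDirac} applied to this canonical system gives, for $\dot g\in L^2_{R_{E'}}$,
\begin{equation*}
(\tSchE-z)^{-1}\dot g(t) = \int_0^1\big( \ast_z(t)\,\bullet_z(s)^\intercal\,\un_{t\le s} + \bullet_z(t)\,\ast_z(s)^\intercal\,\un_{s<t}\big) R_{E'}(s)\,g(s)\,ds\;,
\end{equation*}
where $\ast_z$ is the solution of~\eqref{Eq:yEz} with $\ast_z(0)=(0,1)^\intercal$, i.e. $\ast_z = y^{(E)}_z$, and $\bullet_z$ is the solution parallel to $(0,1)^\intercal$ at time $1$ with Wronskian $1$ against $y^{(E)}_z$, i.e. $\bullet_z = \hat y^{(E)}_z$ as constructed in~\eqref{Eq:hatyzE}.

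Next I would apply $P_{E'}$ on the left. The point is that $P_{E'} = C_{E'}PC_{E'}^{-1}$ acts on a representative by replacing its second coordinate via $f\mapsto (f_1,\tfrac1{L\sqrt E}f_1')$ in the rotated frame; since $y^{(E)}_z$ and $\hat y^{(E)}_z$ are themselves genuine solutions of~\eqref{Eq:yEz} (not just equivalence classes) and by construction already satisfy the relation that $P_{E'}$ enforces — this is exactly the content of Lemma~\ref{Lemma:Unitary}, where $P_{E'}$ fixes $y^{(E)}_\gl$ — applying $P_{E'}$ to the kernel above leaves $y^{(E)}_z(t)$ and $\hat y^{(E)}_z(t)$ unchanged in the $t$ variable (the $s$-variable integration is unaffected, as $P_{E'}$ acts only in $t$). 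This yields precisely~\eqref{Eq:ResolvE}. One should be a little careful: $P_{E'}$ is only densely defined, so I would first establish the formula on a dense set — say for $g$ such that $(\tSchE-z)^{-1}\dot g$ lands in $\mathrm{dom}(P_{E'})$, which holds for all $g\in L^2_I$ because the range of the resolvent is $\cD(\tSchE)\subset\cD(P_{E'})$ — and note that the right-hand side of~\eqref{Eq:ResolvE} defines a bounded operator, so the identity extends by continuity.

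Finally, the norm bound. From~\eqref{Eq:ResolvE}, $\overline{(\tSchE-z)^{-1}}$ is an integral operator on $L^2_I$ with kernel $K(t,s) = \big(\hat y^{(E)}_z(t)y^{(E)}_z(s)^\intercal\un_{s\le t} + y^{(E)}_z(t)\hat y^{(E)}_z(s)^\intercal\un_{s>t}\big)R_{E'}(s)$. Since $R_{E'} = C_{E'}TC_{E'}^\intercal$ and $T,C_{E'}$ have bounded entries uniformly in $E$ and $t$, we get $\|K(t,s)\|_{\mathrm{op}} \le C\,\|y^{(E)}_z\|_\infty\|\hat y^{(E)}_z\|_\infty$ for a.e. $(t,s)\in[0,1]^2$, with $C$ independent of $E$; hence by Schur's test (or just Cauchy–Schwarz, exploiting that $[0,1]$ has finite measure) the operator norm of $\overline{(\tSchE-z)^{-1}}$ on $L^2_I$ is bounded by $C\|y^{(E)}_z\|_\infty\|\hat y^{(E)}_z\|_\infty$, as claimed.

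I expect the main obstacle to be the bookkeeping around the quotient space and the densely-defined map $P_{E'}$: one must verify that applying $P_{E'}$ to the canonical-systems resolvent kernel is legitimate (the resolvent output is automatically in $\cD(P_{E'})$, and $y^{(E)}_z,\hat y^{(E)}_z$ are true solutions so the substitution is exact rather than only holding modulo the equivalence relation), and that the resulting integral operator on $L^2_I$ genuinely agrees with $P_{E'}(\tSchE-z)^{-1}(\cdot)^{\boldsymbol\cdot}$ and not merely with some representative of it. The analytic estimate itself is routine once the kernel formula is in hand.
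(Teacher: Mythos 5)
Your final kernel and the norm bound are the right ones, but the route you take to the kernel has a genuine gap. You propose to view $\tSchE$ as a canonical system with weight $R_{E'}$ and to invoke the resolvent formula \eqref{Eq:ResolventDirac}. However $R_{E'}=C_{E'}TC_{E'}^\intercal$ has rank one at every $t$ (since $\det T=0$), so $\cT_{R_{E'}}:=R_{E'}^{-1}J\partial_t$ is not even defined, and the theory recalled in Subsection \ref{subsec:canonicalsyst} — which assumes $R$ positive \emph{definite} and, in the Dirac-type construction, produces $R=M^\intercal M/\det M$ with $M$ invertible — does not apply. The paper's remark in that subsection explicitly flags that Schr\"odinger operators correspond to non-invertible $R$ and lie outside the simplified framework presented there. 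Your identity $\tSchE=M^{(E)}\cT_{R_{E'}}(M^{(E)})^{-1}$ also cannot be of the form produced in Subsection \ref{subsec:canonicalsyst}: conjugating a canonical system by a fundamental matrix changes the spectral weight to $(\det M)^{-1}I$, whereas $\tSchE$ genuinely lives on the degenerate space $L^2_{R_{E'}}$. A resolvent formula for such degenerate (trace-normed, non-negative) canonical systems does exist in the literature, but it is not established in the paper and "absorbing the lower-order terms exactly as in Subsection \ref{subsec:canonicalsyst}" does not supply it.

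The paper's proof avoids this entirely: it starts from the \emph{classical scalar} Sturm--Liouville Green's function of $\cH^{(E)}$, namely $(\cH^{(E)}-z)^{-1}f(t)=\int_0^1(\hat u^{(E)}_z(t)u^{(E)}_z(s)\un_{s\le t}+u^{(E)}_z(t)\hat u^{(E)}_z(s)\un_{t<s})f(s)\,ds$ with the Wronskian normalized to $L\sqrt E$, and then pushes it through the explicit maps via $P_{E'}(\tSchE-z)^{-1}=C_{E'}P\iota(\cH^{(E)}-z)^{-1}\iota^{-1}C_{E'}^{-1}$, using the identities $u^{(E)}_z\,\iota^{-1}C_{E'}^{-1}\dot g=(y^{(E)}_z)^\intercal R_{E'}g$ and $C_{E'}P\iota\,u^{(E)}_z=y^{(E)}_z$ (and likewise for the hatted solutions). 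This is the step you should carry out in place of the canonical-systems appeal; it also automatically settles your worry about whether applying $P_{E'}$ to the kernel is legitimate, since $P$ acting on the scalar Green's function is just differentiation under the integral sign with cancelling boundary terms. Your argument for the operator-norm bound (boundedness of the entries of $R_{E'}$ plus Cauchy--Schwarz on the finite-measure interval) is correct and is essentially what the paper does.
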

\begin{proof}
Let $u^{(E)}_z$ be the solution of \eqref{Eq:uE} starting from $(u^{(E)}_z(0),(u^{(E)}_z)'(0))=(0,L\sqrt E)$. Let $\hat{u}^{(E)}_z$ be the solution of \eqref{Eq:uE} that is parallel to $(0,1)^\intercal$ at time $1$ and whose Wronskian with $u^{(E)}_z$ satisfies
$$(u^{(E)}_z)'(0) \hat{u}^{(E)}_z(0) - u^{(E)}_z(0) (\hat{u}^{(E)}_z)'(0) = L\sqrt E\;.$$
By the theory of Sturm-Liouville operators, we have an explicit integral form for the resolvent of the rescaled operator ${\cH}^{(E)}$ (defined in \eqref{defopHE}) given by
$$ ({\cH}^{(E)}-z)^{-1} f(t) = \int_0^1 \big(\hat{u}^{(E)}_z(t) u^{(E)}_z(s) \un_{s \le t} + u^{(E)}_z(t) \hat{u}^{(E)}_z(s) \un_{t < s}(s)\big) f(s) ds\;,$$
for $f \in L^2([0,1],\R)$.
Note the identities
$$ y^{(E)}_z = C_{E'} \begin{pmatrix} u^{(E)}_z \\ \frac1{L\sqrt E} (u^{(E)}_z)' \end{pmatrix}\;,\quad \hat{y}^{(E)}_z = C_{E'}\begin{pmatrix} \hat{u}^{(E)}_z\\ \frac1{L\sqrt E} (\hat{u}^{(E)}_z)'\end{pmatrix}\;.$$
Since $C_{E'}^\intercal J C_{E'} = J$, it is easy to check that
$$ \frac{1}{L\sqrt{E}}((u^{(E)}_z)'(0) \hat{u}^{(E)}_z(0) - u^{(E)}_z(0) (\hat{u}^{(E)}_z)'(0)) = y_z^{(E)}(0)^\intercal J \hat{y}^{(E)}_z(0) = 1\;.$$
Furthermore from the identities $P_{E'} = C_{E'} P C_{E'}^{-1}$ and $\tSchE = C_{E'} \iota {\cH}^{(E)} \iota^{-1}C_{E'}^{-1}$, we obtain
$$ P_{E'} (\tSchE - z)^{-1} = C_{E'} P \iota ({\cH}^{(E)} -z)^{-1} \iota^{-1}C_{E'}^{-1}\;.$$
Now observe that we have
$$ u^{(E)}_z \iota^{-1}C_{E'}^{-1} \dot{g} = \begin{pmatrix} u^{(E)}_z & \frac1{L\sqrt E} (u^{(E)}_z)' \end{pmatrix} T C_{E'}^{-1} g = ({y}^{(E)}_z)^\intercal R_{E'} g\;,$$
and similarly
$$ \hat{u}^{(E)}_z \iota^{-1}C_{E'}^{-1} \dot{g} = \begin{pmatrix} \hat{u}^{(E)}_z & \frac1{L\sqrt E} (\hat{u}^{(E)}_z)' \end{pmatrix} T C_{E'}^{-1} g = (\hat{y}^{(E)}_z)^\intercal R_{E'} g\;.$$
Furthermore
$$ C_{E'} P \iota \hat{u}^{(E)}_z = \hat{y}^{(E)}_z\;,\quad C_{E'} P \iota {u}^{(E)}_z = {y}^{(E)}_z\;.$$
Putting everything together, we deduce the asserted expression for $P_{E'} (\tSchE - z)^{-1} \dot{g}$. Finally, since the entries of the matrix $R_{E'}$ are all bounded by $1$, the bound on the operator norm follows.
\end{proof}

Let $A_n, A$ be random bounded operators on $L^2_I$. Recall that $A_n \to A$ in law for the strong operator topology if the finite-dimensional marginals of the process $(A_n f, f\in L^2_I)$ converge in law to those of $(A f, f\in L^2_I)$. Furthermore $A_n\to A$ in law for the norm operator topology if the process $(A_n f, f\in L^2_I)$ converges in law for the topology of uniform convergence on bounded sets to $(A f, f\in L^2_I)$.
The precise statement of Theorem \ref{Th:CVoperator} is then:
\begin{theorem*}[Strong-resolvent convergence: precise statement]
Fix $\tau > 0$ and consider $E=E(L) \sim L/\tau$. As $L\to\infty$ and for any given $z\in\C\backslash\R$, the operator $\overline{(\tSchE-z)^{-1}}$ converges in law towards $(\tSch_\tau-z)^{-1}$ for the strong operator topology. However it does not converge in law to $(\tSch_\tau-z)^{-1}$ for the norm operator topology.
\end{theorem*}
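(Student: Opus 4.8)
Both resolvents are given by explicit integral kernels: \eqref{Eq:ResolvE} for $\overline{(\tSchE-z)^{-1}}$ and \eqref{Eq:ResolventDirac2} for $(\tSch_\tau-z)^{-1}$. Writing
\[
K^{(E)}(t,s):=\hat y^{(E)}_z(t)\,y^{(E)}_z(s)^\intercal\,\un_{s\le t}+y^{(E)}_z(t)\,\hat y^{(E)}_z(s)^\intercal\,\un_{s>t}\,,
\]
and letting $K$ be defined by the same formula with $(y_z,\hat y_z)$ in place of $(y^{(E)}_z,\hat y^{(E)}_z)$, we have $\overline{(\tSchE-z)^{-1}}g(t)=\int_0^1 K^{(E)}(t,s)R_{E'}(s)g(s)\,ds$ and $(\tSch_\tau-z)^{-1}g(t)=\tfrac12\int_0^1 K(t,s)g(s)\,ds$. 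The argument rests on two facts. First, the joint convergence in law, in $\cC([0,1],\R^3)\times\cC([0,1],\C^2)^2$, of $\big(B^{(L)},W_1^{(L)},W_2^{(L)},y^{(E)}_z,v^{(E)}_z\big)$ towards $\big(\cB,\cW_1,\cW_2,y_z,v_z\big)$, where $v^{(E)}_z,v_z$ are the Neumann solutions of \eqref{Eq:yEz}, \eqref{Eq:ymuSch} started from $(1,0)^\intercal$. Second, the elementary identity $R_{E'}(t)=\tfrac12 I+\tfrac12\mat{\cos 2E't}{\sin 2E't}{\sin 2E't}{-\cos 2E't}$, which displays $R_{E'}$ as $\tfrac12 I$ plus a bounded term oscillating at frequency $E'\to\infty$.

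To establish the first fact, I would first prove convergence of the driving data by a functional martingale central limit theorem: $B^{(L)}$ is already a standard Brownian motion, while $W_1^{(L)},W_2^{(L)}$ are continuous martingales in its filtration whose mutual brackets, computed from \eqref{def:W1W2}, are explicit and converge uniformly on $[0,1]$ to $t\,\delta_{ij}$ as $E'\to\infty$ (all oscillatory corrections are $O(1/E')$); hence $(B^{(L)},W_1^{(L)},W_2^{(L)})$ converges in law to a standard three-dimensional Brownian motion. The convergence of $(y^{(E)}_z,v^{(E)}_z)$ is then a stability statement for the linear SDE \eqref{Eq:yEz}: tightness follows from uniform-in-$L$ moment bounds (Burkholder--Davis--Gundy together with Gronwall, legitimate since all coefficients of \eqref{Eq:yEz}---$\tfrac12\sqrt{L/E}$, $\ell_E(2R_{E'}-I)$ and $zR_{E'}$---are bounded uniformly in $L$ and $t$), and any subsequential limit is identified as the unique solution of \eqref{Eq:ymuSch} driven by the limiting noise via the associated martingale problem. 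The delicate point, and the main obstacle, is that the drift of \eqref{Eq:yEz} contains the terms $\ell_E(2R_{E'}-I)$ and the oscillating part of $zR_{E'}$, which do not vanish in amplitude; one must show that $\int_0^t\ell_E(2R_{E'}(s)-I)\,y^{(E)}_z(s)\,ds\to0$ uniformly in $t\in[0,1]$ (and likewise for the $zR_{E'}$-oscillation) by integrating by parts against the semimartingale $y^{(E)}_z$, the boundary and finite-variation contributions being $O((E')^{-1})$ and the leftover stochastic integral having bracket $O((E')^{-2})$. This stochastic-averaging step---reconciling the $O(1)$-amplitude fast oscillation with the passage to the limit---is precisely the technical result announced here and deferred to Section \ref{Sec:SDEs}.

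Granting the first fact, the strong-operator convergence follows. By Skorokhod's representation theorem I may assume $(B^{(L)},W_1^{(L)},W_2^{(L)})\to(\cB,\cW_1,\cW_2)$ and $y^{(E)}_z\to y_z$, $v^{(E)}_z\to v_z$ uniformly on $[0,1]$, almost surely. Since $z\notin\R$ lies in the resolvent set of $\tSch_\tau$ we have $(y_z)_1(1)\ne0$ (this is exactly what makes $\alpha$ in \eqref{Eq:hatyz} well-defined), so the scalar $\alpha^{(E)}$ of \eqref{Eq:hatyzE} converges to $\alpha$, whence $\hat y^{(E)}_z\to\hat y_z$ and $\|K^{(E)}-K\|_\infty\to0$ almost surely. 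Now fix $g\in L^2_I$ and split
\[
K^{(E)}(t,s)R_{E'}(s)g(s)=\tfrac12 K(t,s)g(s)+\big(K^{(E)}(t,s)-K(t,s)\big)R_{E'}(s)g(s)+K(t,s)\big(R_{E'}(s)-\tfrac12 I\big)g(s)\,.
\]
Integrating in $s$: the first term gives $(\tSch_\tau-z)^{-1}g(t)$; the second has $L^2_I$-norm $\le\|K^{(E)}-K\|_\infty\,\|g\|_{L^1}\to0$; and the third tends to $0$ in $L^2_I$ because for fixed $t$ it is a finite combination of Fourier coefficients $\int_0^1\phi_t(s)e^{\pm 2iE's}\,ds$ of $L^1$-functions $\phi_t$ (Riemann--Lebesgue), the convergence being dominated uniformly in $t$ by the constant $\|y_z\|_\infty\|\hat y_z\|_\infty\|g\|_{L^1}$, so that dominated convergence on $[0,1]$ applies. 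Hence $\overline{(\tSchE-z)^{-1}}g\to(\tSch_\tau-z)^{-1}g$ in $L^2_I$ almost surely, simultaneously over any finite family of $g$'s, which is exactly convergence in law for the strong operator topology.

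Finally, for the negative part I would use the deterministic unit vectors $g^{(L)}(s):=(\cos 2E's,\sin 2E's)^\intercal$, which satisfy $\|g^{(L)}\|_{L^2_I}=1$ and converge weakly to $0$ in $L^2_I$ (Riemann--Lebesgue), and for which a direct computation using $R_{E'}=C_{E'}TC_{E'}^\intercal$ yields $R_{E'}(s)g^{(L)}(s)=\tfrac12(1,0)^\intercal+\tfrac12 g^{(L)}(s)$; arguing as in the previous paragraph (the $g^{(L)}$-part integrates against $K^{(E)}$ to $0$ while $K^{(E)}\to K$ uniformly) one gets that $\overline{(\tSchE-z)^{-1}}g^{(L)}$ converges in law in $L^2_I$ to the image $\phi$ of the constant function $(1,0)^\intercal$ under $(\tSch_\tau-z)^{-1}$, and $\phi\ne0$ a.s.\ since $(\tSch_\tau-z)^{-1}$ is injective and the constant $(1,0)^\intercal$ is nonzero in $L^2_I$. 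If $\overline{(\tSchE-z)^{-1}}$ converged in law for the norm operator topology, its limit would necessarily be $(\tSch_\tau-z)^{-1}$ (norm convergence implies strong convergence); realizing this convergence almost surely on a common probability space in the Polish space $\cK(L^2_I)$ of compact operators (the extended resolvents are Hilbert--Schmidt by \eqref{Eq:ResolvE}), we would obtain $\|\overline{(\tSchE-z)^{-1}}g^{(L)}-Ag^{(L)}\|\le\|\overline{(\tSchE-z)^{-1}}-A\|\to0$ a.s.\ for some $A\overset{(d)}{=}(\tSch_\tau-z)^{-1}$; but $A$ is compact and $g^{(L)}\rightharpoonup0$, so $Ag^{(L)}\to0$ a.s., which would force $\overline{(\tSchE-z)^{-1}}g^{(L)}\to0$ in probability, contradicting its convergence in law to the a.s.\ nonzero $\phi$. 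This rules out convergence in the norm operator topology and completes the plan.
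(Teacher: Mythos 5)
Your proposal is correct and follows the paper's strategy in its essentials: both parts rest on the explicit kernel representations, on the convergence in law of $(y^{(E)}_z,v^{(E)}_z)$ to $(y_z,v_z)$ (the paper's Proposition \ref{Prop:CVyv}, likewise deferred and proved by tightness plus a martingale problem, with the stochastic-averaging step you flag handled there by an It\^o integration by parts), on Skorokhod's representation, and on Riemann--Lebesgue to dispose of the oscillation of $R_{E'}$. Two organizational differences are worth noting. First, your three-term splitting places the oscillating factor $R_{E'}-\tfrac12 I$ against the \emph{fixed} limit kernel $K$, so the classical Riemann--Lebesgue lemma plus dominated convergence suffices at that stage; the paper instead pairs it with the $L$-dependent process $y^{(E)}_z$ and must invoke its stochastic Riemann--Lebesgue estimate (Lemma \ref{Lemma:RL}) a second time -- your arrangement is marginally cleaner, though the lemma is still indispensable inside the proof of Proposition \ref{Prop:CVyv}. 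Second, for the failure of norm convergence the paper chooses $g_E(t)=(\sin^2 E't,-\sin E't\cos E't)^\intercal$, which satisfies $R_{E'}g_E=0$, so the prelimit resolvent annihilates it \emph{exactly} and one only has to check that $(\tSch_\tau-z)^{-1}g_E$ has a nonzero limit; your choice $g^{(L)}$ is not annihilated, which forces the extra detour through compactness of the limiting resolvent and the weak nullity of $g^{(L)}$. That detour is sound (the extended resolvents are Hilbert--Schmidt and the space of compact operators is separable, so the Skorokhod step is legitimate), but the paper's test function makes the contradiction immediate and avoids any appeal to compactness.
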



\subsection{Strong resolvent convergence}\label{subsec:StrongResolventCV}

Recall from \eqref{Eq:ResolvE} and \eqref{Eq:ResolventDirac2} the expressions of the resolvents at stake. These resolvents depend respectively on the pair of processes $(y^{(E)}_z,\hat{y}^{(E)}_z)$, and $(y_z,\hat{y}_z)$. The main technical step consists in showing convergence in law of the former towards the latter.\\

Recall from \eqref{Eq:hatyzE} and \eqref{Eq:hatyz} that $\hat{y}^{(E)}_z = v^{(E)}_z - \alpha^{(E)}\, y^{(E)}_z$ and $\hat{y}_z = v_z - \alpha\, y_z$. Note that almost surely $\alpha^{(E)}$ is neither $0$ nor $\infty$. Indeed, suppose for instance that with positive probability $\alpha^{(E)}=\infty$, then it means that $y_z^{(E)}$ satisfies Dirichlet b.c.~at $0$ and $1$ so that $z$ is a non-real eigenvalue of the self-adjoint operator $\tSchE$, thus yielding a contradiction. The reasoning is the same for $\alpha^{(E)} = 0$ (with a contradiction with Neumann b.c.). Similarly almost surely $\alpha$ is neither $0$ nor $\infty$.\\

Our main technical step is the following result, whose proof is postponed to the next section.

\begin{proposition}\label{Prop:CVyv}
For any $z \in \C\backslash \R$, the process $(y^{(E)}_{z},v^{(E)}_{z})$ converges in law to $(y_{z},v_{z})$ for the topology of uniform convergence.
\end{proposition}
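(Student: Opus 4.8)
The plan is to reduce the convergence of the pair $(y^{(E)}_z, v^{(E)}_z)$ to an SDE stability statement, comparing the driving equation \eqref{Eq:yEz} with its limit \eqref{Eq:ymuSch}. Both processes solve linear systems of the form $dY = A^{(E)}(t)\,Y\,dt + \sum_j B^{(E)}_j\,Y\,d\xi^{(E)}_j(t)$ with bounded coefficients; the only subtlety is that the driving noise in \eqref{Eq:yEz} consists of the three martingales $B^{(L)}, W_1^{(L)}, W_2^{(L)}$ which are \emph{not} independent at finite $L$ (they are all built from the single Brownian motion $B^{(L)}$) but become asymptotically independent as $L\to\infty$. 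So the first step is to record that $(B^{(L)}, W_1^{(L)}, W_2^{(L)})$ converges in law, in $\cC([0,1],\R^3)$, to a triple of independent standard Brownian motions $(\cB,\cW_1,\cW_2)$. This is a martingale central limit statement: each is a continuous martingale, their brackets are $\langle B^{(L)}\rangle_t = t$, $\langle W_i^{(L)}\rangle_t = \int_0^t 2\cos^2(2E's)\,ds$ (resp.\ $\sin^2$) $\to t$, and the cross-brackets $\langle B^{(L)}, W_1^{(L)}\rangle_t = \sqrt2\int_0^t\cos(2E's)ds$, $\langle W_1^{(L)},W_2^{(L)}\rangle_t = 2\int_0^t\cos(2E's)\sin(2E's)ds$, etc., all tend to $0$ uniformly in $t$ by the Riemann--Lebesgue lemma since $E' = L\sqrt E - \ell_E \to\infty$. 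By the martingale functional CLT (e.g.\ Rebolledo / Jacod--Shiryaev) this gives joint convergence in law to independent Brownian motions.

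The second step is to pass this through the SDEs. Since $L/E \to \tau$, the coefficient $\tfrac12\sqrt{L/E}$ converges to $\tfrac{\sqrt\tau}{2}$, the term $\ell_E(2R_{E'}-I)$ has bounded (indeed $O(1)$) size but the prefactor issue is that $R_{E'}\to \tfrac12 I$ only weakly (in the Riemann--Lebesgue sense), not pointwise; similarly the drift $z R_{E'} y^{(E)}_z$ has an oscillating coefficient. The clean way to handle this is to invoke a stability/continuity theorem for SDEs driven by semimartingales whose characteristics converge: Kurtz--Protter's results on convergence of stochastic integrals, or the theorem that if the driving semimartingales converge in the $\mathrm{U.C.P.}$ (or in law, with a good-sequence / $\mathbf{UT}$ condition) and the coefficients converge suitably, then the solutions converge in law. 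Here the oscillating deterministic terms $R_{E'}(t)$, $C_{E'}(t)$ should be absorbed by writing them as $\tfrac12 I$ plus a term that, after integration against the (bounded variation or martingale) integrands, contributes an integral of a bounded integrand against a fast oscillation — which vanishes in probability uniformly on $[0,1]$ by a routine integration-by-parts / Riemann--Lebesgue estimate. One then recognizes the limiting equation as exactly \eqref{Eq:ymuSch} with $z$ in place of $\gl$ (the factor $2$ in $\tSch_\tau$ being matched by the $R_{E'}\to\tfrac12 I$ on the right-hand side), and the initial conditions $(0,1)^\intercal$ and $(1,0)^\intercal$ match $y_z$ and $v_z$. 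Uniqueness of solutions to the limiting linear SDE then pins down the limit, giving convergence in law for the uniform topology on $\cC([0,1],\C^2)$ of $(y^{(E)}_z, v^{(E)}_z)$ to $(y_z, v_z)$.

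With Proposition~\ref{Prop:CVyv} in hand, I would deduce the strong-resolvent statement. From \eqref{Eq:hatyzE} and \eqref{Eq:hatyz}, $\alpha^{(E)}$ is a continuous function of $(y^{(E)}_z(1), v^{(E)}_z(1))$ and $\alpha$ the same function of $(y_z(1), v_z(1))$; since a.s.\ $\alpha$ is neither $0$ nor $\infty$ (the denominator $(1\ 0)y_z(1)$ is a.s.\ nonzero because $z\notin\R$ is not an eigenvalue of the self-adjoint $\tSch_\tau$), the continuous mapping theorem gives joint convergence of $(y^{(E)}_z, \hat y^{(E)}_z) \Rightarrow (y_z, \hat y_z)$ in the uniform topology. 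Plugging into the explicit kernels \eqref{Eq:ResolvE} and \eqref{Eq:ResolventDirac2}: for fixed $g \in L^2_I$, the map $(y,\hat y)\mapsto \int(\hat y(t)y(s)^\intercal\un_{s\le t} + y(t)\hat y(s)^\intercal\un_{s>t})R(s)g(s)\,ds$ is continuous from $\cC([0,1],\C^2)^2$ (with uniform norm) into $L^2_I$, using that $R_{E'}\to\tfrac12 I$ in the sense that $\int_0^1 |R_{E'}(s) - \tfrac12 I|\,|g(s)|\,ds \to 0$ (bounded entries, oscillation kills the off-diagonal and centers the diagonal — again Riemann--Lebesgue / dominated convergence), and that the $\un_{s\le t}$ kernels are uniformly bounded so dominated convergence applies to the $L^2$ norm in $t$. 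Hence $\overline{(\tSchE-z)^{-1}}g \Rightarrow (\tSch_\tau - z)^{-1} g$ in $L^2_I$ for each fixed $g$, and joint convergence over finitely many $g$'s follows because all these continuous functionals are driven by the \emph{same} limiting pair $(y_z,\hat y_z)$ — this is exactly convergence in law for the strong operator topology. For the negative part (no norm-resolvent convergence), the operator norm bound $\|\overline{(\tSchE-z)^{-1}}\|\le C\|y^{(E)}_z\|_\infty\|\hat y^{(E)}_z\|_\infty$ is the wrong direction; instead I would exhibit a sequence $g_E$ with $\|g_E\|=1$ along which $\|\overline{(\tSchE-z)^{-1}}g_E\|$ does \emph{not} track $\|(\tSch_\tau-z)^{-1}g_E\|$ — e.g.\ highly oscillatory $g_E$ (say $g_E$ supported spectrally near the oscillation frequency $E'$, or $g_E = C_{E'}(0,1)^\intercal$-type directions) which lie (asymptotically) in the kernel-like direction that the weak limit $R_{E'}\to\tfrac12 I$ washes out but which a genuine norm limit would have to respect; the point is precisely that $R_{E'}\to\tfrac12 I$ only weakly, so the resolvents cannot converge in norm.

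I expect the main obstacle to be the rigorous treatment of the oscillating coefficients $C_{E'}(t)$, $R_{E'}(t)$ inside the SDE: these do not converge pointwise, and one must argue that their contribution, once integrated against the bounded (or finite-variation) processes that multiply them, vanishes uniformly on $[0,1]$ — a quantitative Riemann--Lebesgue / stochastic-oscillatory-integral estimate (controlling $\int_0^t h(s)e^{2iE's}\,ds$ and $\int_0^t H_s e^{2iE's}\,d\xi_s$ for adapted, moment-bounded $h,H$) is the technical heart, and it is presumably what Proposition~\ref{Prop:CVyv}'s deferred proof is devoted to. A secondary subtlety is making the ``joint'' convergence of the resolvents applied to several test functions rigorous as a statement about the strong operator topology, but this is standard once one notes all functionals share the same random limit object.
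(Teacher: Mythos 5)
Your proposal is correct and identifies the technical heart accurately, but it reaches the limit by a genuinely different route from the paper's, so a comparison is in order. The paper's proof proceeds by (i) uniform-in-$L$ moment bounds on $y^{(E)}_z$ via Burkholder--Davis--Gundy and Gr\"onwall (Lemma \ref{Lemma:Moments}); (ii) an oscillatory-integral estimate obtained by It\^o integration by parts (Lemma \ref{Lemma:RL}) --- exactly the ``quantitative Riemann--Lebesgue for an $L$-dependent integrand'' that you flag, and the paper explicitly notes the classical lemma does not apply because $y^{(E)}_z$ depends on $L$; (iii) tightness via Kolmogorov--Centsov; (iv) identification of the limit by a martingale problem, as in Subsection \ref{Subsec:IdLim}. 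You instead first prove a martingale functional CLT for the driving triple $(B^{(L)},W_1^{(L)},W_2^{(L)})$ (your bracket and cross-bracket computations are right, and this makes the ``noise explosion'' explicit at the level of the noises rather than of the solutions) and then invoke Kurtz--Protter-type SDE stability. What your route buys is a cleaner conceptual separation and an off-the-shelf treatment of the stochastic-integral terms; what it costs is that the stability theorem does not apply directly, since the drift coefficients $zR_{E'}$ and $\ell_E(2R_{E'}-I)$ converge only weakly. To remove them you still need (a) the a priori moment bounds and (b) the oscillatory estimates for $\int_0^t e^{2iE's}H_s\,ds$ with $H=y^{(E)}_z$ adapted and moment-bounded --- i.e.\ the content of Lemmas \ref{Lemma:Moments} and \ref{Lemma:RL} --- so in practice the stability theorem only spares you the Kolmogorov--Centsov step. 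You anticipate both points, so there is no gap. (Your last two paragraphs concern the deduction of Theorem \ref{Th:CVoperator} rather than the proposition itself; they match the paper's Section \ref{sec:CVoperator}, including your guess for the norm-resolvent counterexample: a $g_E$ proportional to $C_{E'}(0,1)^\intercal$, annihilated by $R_{E'}$ --- the paper takes $g_E=(\sin^2 E't,\,-\sin E't\cos E't)^\intercal$.)
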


With this proposition at hand, we can proceed with the proof of the first part of the theorem.

\begin{proof}[Proof of Theorem \ref{Th:CVoperator} - Strong resolvent convergence]
To prove strong resolvent convergence, it suffices to show that for any fixed $g_1,\ldots,g_n \in L^2_I$, the vector $(\overline{(\tSchE - z)^{-1}} g_i)_{1\le i \le n}$ converges in law to $((\tSch_\tau - z)^{-1} g_i)_{1\le i \le n}$. Note that the resolvents at stake are measurable functions of the processes $(y^{(E)}_{z},v^{(E)}_{z})$ and $(y_{z},v_{z})$. We thus combine Proposition \ref{Prop:CVyv} and Skorohod's Representation Theorem, and work under a coupling for which $(y^{(E)}_{z},v^{(E)}_{z})$ converges almost surely to $(y_{z},v_{z})$. It now suffices to prove that for any function $g \in L^2_I$, $\overline{(\tSchE - z)^{-1}} g$ converges in probability to $(\tSch_\tau - z)^{-1} g$.\\

Recall that the norm of the operator $\overline{(\tSchE - z)^{-1}}$ is bounded by a constant times $\|y^{(E)}_{z}\|_\infty \|\hat{y}^{(E)}_{z}\|_\infty$, and similarly for the norm of $(\tSch_\tau-z)^{-1}$. From the almost sure uniform convergence of $(y^{(E)}_{z},v^{(E)}_{z})$ towards $(y_{z},v_{z})$, we deduce that almost surely the norms of $(\overline{(\tSchE - z)^{-1}})_{L>1}$ and $(\tSch_\tau-z)^{-1}$ are uniformly bounded. Since smooth functions are dense in $L^2_I$, we can restrict ourselves to considering smooth functions $g:[0,1]\to \R^2$ in the sequel.\\

From Proposition \ref{Prop:CVyv}, we deduce that the coefficient $\alpha^{(E)}$ converges almost surely to $\alpha$, and that the pair $(y^{(E)}_z,\hat{y}^{(E)}_z)$ converges almost surely to the pair $(y_z,\hat{y}_z)$.\\

Let us rewrite the resolvents in the following way
$$ \overline{(\tSchE - z)^{-1}} g(t) = \hat{y}^{(E)}_z(t) u^{(E)}(t) + y^{(E)}_z(t) \hat{u}^{(E)}(t)\;,\quad t\in [0,1]\;,$$
with
$$ u^{(E)}(t) := \int_0^t y_z^{(E)}(s)^\intercal R_{E'}(s) g(s) ds\;,\quad \hat{u}^{(E)}(t) := \int_t^1 \hat{y}_z^{(E)}(s)^\intercal R_{E'}(s) g(s) ds\;.$$
Similarly
$$ (\tSch_\tau - z)^{-1} g(t) = \hat{y}_z(t) u(t) + y_z(t) \hat{u}(t)\;,\quad t\in [0,1]\;,$$
with
$$ u(t) := \frac12 \int_0^t y_z(s)^\intercal g(s) ds\;,\quad \hat{u}(t) := \frac12 \int_t^1 \hat{y}_z(s)^\intercal g(s) ds\;.$$
Then we have
\begin{align*}
\| \overline{(\tSchE - z)^{-1}} g - (\tSch_\tau - z)^{-1} g\|_{L^2_I} &\le  \| \hat{y}^{(E)}_z u^{(E)} - \hat{y}_z u \|_{L^2_I}+ \|y^{(E)}_z \hat{u}^{(E)} - y_z \hat{u} \|_{L^2_I}\;.
\end{align*}
The arguments to bound the two terms on the r.h.s.~are the same, so we provide the details only for the first. We write
\begin{align*}
\| \hat{y}^{(E)}_z u^{(E)} - \hat{y}_z u \|_{L^2_I} \le \| (\hat{y}^{(E)}_z - \hat{y}_z) u^{(E)} \|_{L^2_I} + \| \hat{y}_z (u^{(E)} - u)\|_{L^2_I}
\end{align*}
The a.s.~convergence of $\hat{y}^{(E)}_z$ to $\hat{y}_z$ ensures that the first term on the r.h.s.~goes to $0$ almost surely. Regarding the second term, we have
$$  \| \hat{y}_z (u^{(E)} - u)\|_{L^2_I} \le \sup_{t\in [0,1]} |\hat{y}_z| \sup_{t\in [0,1]} |u^{(E)}(t) - u(t)|\;.$$
and it remains to show that $\sup_{t\in [0,1]} |u^{(E)}(t) - u(t)|$ goes to $0$ in probability. Observe that
$$ u^{(E)}(t) - u(t) =  \int_0^t y_z^{(E)}(s)^\intercal (R_{E'}(s) - (1/2)I) g(s) ds + \frac12 \int_0^t (y_z(s) - y_z^{(E)})^\intercal g(s) ds\;.$$
From the almost sure convergence of ${y}^{(E)}_z$ to ${y}_z$, we deduce that the second term goes to $0$ almost surely. Note that
$$ R_{E'}(s) - (1/2)I = \frac12 \begin{pmatrix} \cos 2E's & \sin 2E's\\ \sin 2E's & -\cos 2E's\end{pmatrix}\;,$$
so that the first term is a linear combination of expressions of the form
$$ \int_0^t f(s) (y^{(E)}_z)_i(s) g_j(s) ds\;,$$
where $f(s)$ is either $\cos 2E's$ or $\sin 2E's$ and $i,j \in \{1,2\}$. Since $E'\to\infty$ as $L\to\infty$, the Riemann-Lebesgue Lemma should imply that this term goes to $0$ almost surely as $L\to\infty$: however, we are not exactly within the scope of this lemma since the (random) function $y^{(E)}_z$ \emph{depends} on $L$. We thus rely on Lemma \ref{Lemma:RL}, which is stated below, and this suffices to conclude.
\end{proof}

\subsection{Absence of norm resolvent convergence}

Set
$$ g_E(t) := (\sin^2 E't , -\sin E't \cos E't)^\intercal\;,\quad t\in [0,1]\;.$$
It is easy to check that
$$ \| g_E \|_{L^2_I} \to \frac12\;,\quad L\to\infty\;,$$
and therefore $(g_E)_{L\ge 1}$ remains in a bounded set of $L^2_I$.\\
Note that $R_{E'} g_E = 0$ so that $\overline{(\tSchE - z)^{-1}} g_E = 0$. To conclude, it suffices to show that as $L\to\infty$, with positive probability $(\tSch_\tau - z)^{-1} g_E$ does not converge to $0$ in $L^2_I$.\\

Recall that
$$ (\tSch_\tau - z)^{-1} g_E(t) = \hat{y}_z(t) u_E(t) + y_z(t) \hat{u}_E(t)\;,\quad t\in [0,1]\;,$$
with
$$ u_E(t) := \frac12 \int_0^t y_z(s)^\intercal g_E(s) ds\;,\quad \hat{u}_E(t) := \frac12 \int_t^1 \hat{y}_z(s)^\intercal g_E(s) ds\;.$$
By the Riemann-Lebesgue Lemma, almost surely $u_E, \hat{u}_E$ converge pointwise to $u,\hat{u}$ where
$$ u(t) = \frac14 \int_0^t (y_z)_1(s)ds\;,\quad \hat{u}(t) = \frac14 \int_t^1 (\hat{y}_z)_1(s)ds\;.$$
Note that $|u_E|_\infty$ and $|\hat{u}_E|_\infty$ are almost surely bounded by $(|y_z|_\infty + |\hat{y}_z|_\infty)$. Therefore by the Dominated Convergence Theorem, almost surely $(\tSch_\tau - z)^{-1} g_E$ converges in $L^2_I$ to
$$t\mapsto \hat{y}_z(t) u(t) + y_z(t) \hat{u}(t)\;.$$
Since $y_z(t)$ and $\hat{y}_z(t)$ are linearly independent for all $t\in [0,1]$, we deduce that the $L^2_I$-norm of the latter vanishes if and only if $u(t) = \hat{u}(t) = 0$ for almost every $t\in [0,1]$. The latter property would imply that $(y_z)_1$ and $(\hat{y}_z)_1$ are identically $0$, which is not true almost surely. Consequently, almost surely $(\tSch_\tau - z)^{-1} g_E$ converges in $L^2_I$ to a non-degenerate limit, thus concluding the proof of Theorem \ref{Th:CVoperator}.

\section{Convergence of the SDEs}\label{Sec:SDEs}

In this section, we prove Theorems \ref{Th:Joint1} and \ref{Th:Joint2}. The arguments are relatively elementary: we show convergence of the system of SDEs associated with the operator $\tSchE$ towards its counterpart for $\tSch_\tau$. At the end of the section, we present the proof of Proposition \ref{Prop:CVyv}, since the arguments are small modifications of the previous ones. Until the end of the section we always assume that $E \sim L/\tau$ for some $\tau > 0$.\\

We consider the solutions $y_\gl^{(E)}$ of \eqref{Eq:yEz} starting from $(0,1)^\intercal$ at time $0$. For $\gl \in \R$, it is convenient to consider the associated polar coordinates, also called Pr\"ufer coordinates, implicitly defined by:
$$ (y^{(E)}_\gl)_1 = r^{(E)}_\gl(t) \sin \theta^{(E)}_\gl(t) \;,\quad (y^{(E)}_\gl)_2 = r^{(E)}_\gl(t) \cos \theta^{(E)}_\gl(t)\;.$$

Set $W^{(L)} := W_1^{(L)} + i W_2^{(L)} (:= \sqrt{2} \int_0^{\cdot} e^{i 2E' s} dB^{(L)}(s))$. Tedious applications of It\^o's formula (see also Remark \ref{Rk:Ito}) show that the equations for $r^{(E)}_\lambda$ and $\theta^{(E)}_\lambda$ are
\begin{equation}\label{Eq:SDEthetar}\begin{split}
d\theta^{(E)}_\lambda(t) &= \frac{\lambda}{2} dt - \frac{\sqrt{L/E}}{2} dB^{(L)}(t) + \frac{\sqrt{L/E}}{2\sqrt 2} \Re(e^{2i \theta^{(E)}_\lambda} dW^{(L)}(t)) + \cE(\theta^{(E)}_\lambda)(t)dt\;,\\
d\ln r^{(E)}_\lambda(t) &= \frac{L/E}8 dt + \frac{\sqrt{L/E}}{2\sqrt 2}  \Im(e^{2i \theta^{(E)}_\lambda} dW^{(L)}(t)) + \cE(\ln r^{(E)}_\lambda)(t)dt\;,
\end{split}\end{equation}
where the terms $\cE(\cdot)$, which will be proven to be negligible in the limit $L\to\infty$, are given by
\begin{align*}
\cE(\theta^{(E)}_\lambda)(t) &:= \frac{2\ell_E-\lambda}{2} \cos(2 \theta^{(E)}_\lambda(t) + 2E't) + \frac{L/E}4 \sin(2 \theta^{(E)}_\lambda(t)+2E't)\\
&\qquad- \frac{L/E}8  \sin(4 \theta^{(E)}_\lambda(t)+4E't) \;,\\
\cE(\ln r^{(E)}_\lambda)(t) &:=  \frac{2\ell_E - \gl}{2} \sin (2\theta^{(E)}_\lambda(t)+2E't)   - \frac{L/E}4 \cos(2 \theta^{(E)}_\lambda(t)+2E't)\\
&\qquad+ \frac{L/E}8 \cos(4 \theta^{(E)}_\lambda(t)+4E't)\;.
\end{align*}
In the above equations, the initial conditions are taken to be $\ln r^{(E)}_\lambda(0) = 0$ and $\theta^{(E)}_\lambda(0) = 0$.

\begin{remark}\label{Rk:Ito}
Recall $u_\gl^{(E)}$ from \eqref{Eq:uE} and note that $u_\gl^{(E)} = r^{(E)}_\gl \sin (\theta^{(E)}_\gl + E't)$ and $\frac1{L\sqrt E} (u_\gl^{(E)})' = r^{(E)}_\gl \cos (\theta^{(E)}_\gl + E't)$. Since the evolution equation of $u_\gl^{(E)}$ is simpler than that of $y_\gl^{(E)}$, one may prefer to apply It\^o's formula at this level, namely
$$ \ln r^{(E)}_\gl = \frac12 \ln ((u_\gl^{(E)})^2 + (\frac1{L\sqrt E} (u_\gl^{(E)})')^2)\;,\quad \theta^{(E)}_\gl + E't = \arccotan \frac{(u_\gl^{(E)})'}{L\sqrt E \,u_\gl^{(E)}}\;.$$
\end{remark}

Let $\Theta_{\lambda}, \Gamma_{\lambda}$ be the solutions of \eqref{Eq:Theta} starting from $\Theta_{\lambda}(0) = 0$ and $\ln\Gamma_{\lambda}(0) = 0$. Observe the similarity between the SDEs solved by $(\Theta_{\lambda}, \Gamma_{\lambda})$ and $(\theta^{(E)}_\gl,r^{(E)}_\gl)$.

\begin{proposition}\label{Prop:CVSDEs}
Fix $\tau > 0$ and consider $E=E(L) \sim L/\tau$. The collection, indexed by $L>1$, of continuous processes $(\theta^{(E)}_\gl(t),r^{(E)}_\gl(t) ; t\in [0,1], \gl \in \R)$ converges in law to $(\Theta_{\gl}(t),\Gamma_{\gl}(t) ; t\in [0,1], \gl \in\R)$, for the topology of uniform convergence on compact sets of $[0,1]\times \R$.
\end{proposition}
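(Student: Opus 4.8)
The plan is to establish convergence of the system of SDEs \eqref{Eq:SDEthetar} to \eqref{Eq:Theta} by first dealing with a fixed $\gl$, and then upgrading to joint convergence in $(\gl,t)$ using continuity in $\gl$. The guiding principle is that, in the regime $E\sim L/\tau$, the prefactors $\sqrt{L/E}\to\sqrt{\tau}$ and $L/E\to\tau$, while the ``error'' terms $\cE(\theta^{(E)}_\gl)$ and $\cE(\ln r^{(E)}_\gl)$ all contain a rapidly oscillating factor $\cos(2\theta^{(E)}_\gl(t)+2E't)$, $\sin(2\theta^{(E)}_\gl(t)+2E't)$, or their frequency-doubled analogues; since $E'\to\infty$, these should average out and contribute nothing in the limit. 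The remaining driving noise $W^{(L)}=\sqrt2\int_0^\cdot e^{2iE's}\,dB^{(L)}(s)$, together with $B^{(L)}$, must be shown to converge in law to a triple of independent Brownian motions $(\cB,\cW_1,\cW_2)$ — this is the ``noise explosion'' phenomenon alluded to in the introduction.

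First I would fix $\gl$ (or rather a compact set of $\gl$'s) and prove tightness of $(\theta^{(E)}_\gl, \ln r^{(E)}_\gl)$ in $\cC([0,1])$. This is routine: the martingale parts have bracket bounded uniformly in $L$ (the integrands are bounded), and the drift terms, although they involve the unbounded-looking but actually $O(1)$ quantities $L/E$ and $\ell_E$, are uniformly bounded; a Kolmogorov-type moment estimate gives tightness. Next, I would identify the limit. The key tool here is a Riemann–Lebesgue / oscillatory-integral estimate for the error terms: I need to show that quantities such as $\int_0^t \cos(2\theta^{(E)}_\gl(s)+2E's)\,h(s)\,ds$ converge to $0$, where $h$ is a (possibly random, $L$-dependent, but tight) process. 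This is precisely the content of the Lemma \ref{Lemma:RL} invoked at the end of Section \ref{subsec:StrongResolventCV}; one integrates by parts in the phase $2E's$, using that $E'\to\infty$ and that $\theta^{(E)}_\gl$ has controlled variation (it is a semimartingale with bracket $O(1)$), so the boundary terms are $O(1/E')$ and the remaining integral against $d\theta^{(E)}_\gl$ is also small. For the martingale term $\int_0^t e^{2i\theta^{(E)}_\gl(s)}\,dW^{(L)}(s)$, I would compute its bracket against itself and against $B^{(L)}$: using $W^{(L)}=\sqrt2\int e^{2iE's}dB^{(L)}$ one finds the cross-brackets are again oscillatory integrals that vanish in the limit, so by a martingale CLT (e.g. the Kurtz–Protter / martingale functional limit theorem) the triple $(B^{(L)}, \int_0^\cdot e^{2i\theta^{(E)}_\gl}dW^{(L)})$ jointly converges to independent Brownian motions, from which \eqref{Eq:Theta} is identified as the limiting SDE. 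Uniqueness in law of \eqref{Eq:Theta} (it has globally Lipschitz, bounded coefficients in the relevant variables after noting the equation for $\Theta_\gl$ decouples and the one for $\ln\Gamma_\gl$ is then a simple integral) then pins down the limit.

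To obtain convergence \emph{jointly} in $\gl$ over compact sets, I would use that $\gl\mapsto(\theta^{(E)}_\gl,\ln r^{(E)}_\gl)$ is, for each $L$, continuous (indeed differentiable, by the same kind of argument as for $\partial_\gl\Theta_\gl$ in \eqref{Eq:partialTheta}) with derivative satisfying an SDE whose coefficients are uniformly bounded in $L$; this gives equicontinuity in $\gl$ uniformly in $L$, hence tightness in $\cC([0,1]\times K)$ for compact $K\subset\R$, and combined with finite-dimensional-in-$\gl$ convergence (which follows from the fixed-$\gl$ argument applied to finite families, noting the $\gl$'s share the same driving noises) yields the claim.

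The main obstacle, I expect, is the oscillatory-integral control of the error terms and of the martingale brackets when the integrand itself is an $L$-dependent random process — one cannot simply cite Riemann–Lebesgue. The clean way around this is to always pair the oscillation $e^{2iE's}$ with a semimartingale of uniformly (in $L$) bounded quadratic variation and drift, integrate by parts / use stochastic integration by parts in the fast phase, and bound everything by $C/E'$ times tight quantities; this is exactly why Lemma \ref{Lemma:RL} is isolated as a separate statement. A secondary technical point is keeping track of the initial-condition normalization and checking that the polar coordinates are well defined (i.e. $r^{(E)}_\gl$ never vanishes), which follows from $y^{(E)}_\gl$ solving a linear system started at a nonzero vector.
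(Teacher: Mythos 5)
Your proposal is correct and follows the same skeleton as the paper: tightness via Kolmogorov-type moment bounds, elimination of the oscillatory error terms by integration by parts in the fast phase $2E't$ (this is exactly Lemma \ref{lem:Errors}, proved via It\^o's formula with the martingale remainder controlled by Burkholder--Davis--Gundy), and then identification of the limit. The two places where your route genuinely differs are the following. For the identification, the paper does not prove convergence of the driving noises $(B^{(L)},\int e^{2i\theta^{(E)}_\gl}dW^{(L)})$ to independent Brownian motions and then pass to the limit in the SDE; instead it sets up a martingale problem: it shows that the limits of the compensated processes $M^{(E)}_{\gl_i},N^{(E)}_{\gl_i}$ are martingales whose pairwise products, compensated by the explicit brackets \eqref{Eq:Brackets2} (obtained by expanding the trigonometric products and killing the oscillating pieces with the same Riemann--Lebesgue-type argument), are again martingales, and then invokes uniqueness for that martingale problem. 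This sidesteps the adaptedness/UT issues inherent in weak convergence of stochastic integrals; your appeal to Kurtz--Protter is the correct fix for those issues, but it is an extra layer the paper avoids. For joint convergence in $\gl$, the paper does not pass through $\partial_\gl\theta^{(E)}_\gl$; it proves the increment bounds $\E[|\theta^{(E)}_\gl(t)-\theta^{(E)}_\mu(t)|^{p}]\le C|\gl-\mu|^{p}$ directly by Gr\"onwall, using that the coefficients are Lipschitz in $\theta$, and then applies a two-parameter Kolmogorov--Centsov theorem to get tightness in $\cC([0,1]\times K)$. Your derivative argument is essentially the same Gr\"onwall estimate in disguise, but note that the SDE for $\partial_\gl\theta^{(E)}_\gl$ is linear in the derivative with bounded coefficients rather than having bounded solutions, so ``equicontinuity uniformly in $L$'' must still be phrased as a moment bound, not a pointwise one. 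Both routes reach the same conclusion; the paper's is marginally more self-contained.
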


With this proposition at hand, we can proceed with the proof of the theorems.

\begin{proof}[Proofs of Theorems \ref{Th:Joint1} and \ref{Th:Joint2}]
In the proof of Theorem \ref{Th:Intensity} we saw that almost surely $\R \ni \gl \mapsto \Theta_\gl(1)$ is a continuous, increasing bijection from $\R$ to $\R$. The very same arguments ensure that this property also holds for $\R \ni \gl \mapsto \theta^{(E)}_\gl(1)$.\\
The convergence in law stated in Proposition \ref{Prop:CVSDEs} thus implies that the ordered sequence of hitting ``times'' of $\pi\Z$ by $\gl \mapsto \theta^{(E)}_{\gl}(1)$ converges to the corresponding sequence associated to $\gl \mapsto \Theta_{\gl}(1)$.\\
Note that $y_\gl^{(E)}$, resp.~$y_\gl$, is a continuous function of $(\theta^{(E)}_\gl,r^{(E)}_\gl)$, resp.~$(\Theta_{\gl},\Gamma_{\gl})$. We deduce that the point process
$$ \Big\{ \Big(\lambda, \frac{y_{\lambda}^{(E)}}{\|y_{\lambda}^{(E)}\|_{L^2}}\Big): \theta^{(E)}_{\lambda}(1) \in \pi\Z\Big\}\;,$$
converges in law to the point process
$$ \Big\{ \Big(\lambda, \frac{y_{\lambda}}{\|y_{\lambda}\|_{L^2}}\Big): \Theta_{\lambda}(1) \in \pi\Z\Big\}\;.$$
This is exactly the convergence stated in Theorem \ref{Th:Joint2}. Since the point processes involved in the convergence stated in Theorem \ref{Th:Joint1} are continuous projections of the above point processes, Theorem \ref{Th:Joint1} follows.
\end{proof}
\begin{remark}
As already mentioned, the convergence of the eigenvalues of Theorems \ref{Th:Joint1} and \ref{Th:Joint2} is the continuous analog of the result of \cite{KVV}. We believe that in Corollary 4 in \cite{KVV}, there should be no constant $\pi$ i.e. that the correct statement for the convergence of the eigenvalues of the discrete model (using their notations) is:
\begin{align*}
\Lambda_n - \arg(z^{2n+2}) \to \mbox{Sch}_\tau\,.
\end{align*}
\end{remark}

The next three subsections are devoted to the proof of Proposition \ref{Prop:CVSDEs}, while the last subsection provides the arguments for the proof of Proposition \ref{Prop:CVyv}.

\subsection{Tightness}\label{Subsec:Tightness}

Suppose we can show that for any $p\ge 2$, there exists a constant $C>0$ such that for all $L>1$, for all $\mu<\lambda$ and all $0 \le s \le t \le 1$
\begin{equation}\label{Eq:MomentsTime} \E[ |\theta^{(E)}_\mu(t) - \theta^{(E)}_\mu(s)|^{2p}] \le C |t-s|^p\;,\quad  \E[ |\ln r^{(E)}_\mu(t) - \ln r^{(E)}_\mu(s)|^{2p}] \le C |t-s|^p\;,\end{equation}
and
\begin{equation}\label{Eq:MomentsFreq} \E[ |\theta^{(E)}_\lambda(t) - \theta^{(E)}_\mu(t)|^{p}] \le C |\lambda-\mu|^p\;,\quad  \E[ |\ln r^{(E)}_\lambda(t) - \ln r^{(E)}_\mu(t)|^{p}] \le C |\lambda-\mu|^p\;.\end{equation}
Then, by Kolmogorov-Centsov's Theorem~\cite[Th 2.23 \& Th 14.9]{Kallenberg}, we deduce that there exists a constant $\beta > 0$ such that for any $p\ge 1$ and any compact set $K\subset \R$
$$ \sup_{L>1}\E\bigg[ \sup_{0\le s \le t \le 1} \sup_{\lambda,\mu \in K} \Big(\frac{|\theta^{(E)}_\mu(t) - \theta^{(E)}_\lambda(s)|}{(|t-s| + |\lambda-\mu|)^\beta}\Big)^p\bigg] < \infty\;,$$
and similarly for $\ln r^{(E)}_\gl$. Since in addition $\theta^{(E)}_\gl(0) = \ln r^{(E)}_\gl(0) = 0$, we deduce that the collection of processes is tight.\\

It remains to prove the above bounds. The increments in $t$ are easy to control: since the drift and diffusion coefficients of the SDE are bounded by some constant (uniformly over all parameters), we get the desired bound using the triangle inequality (to control separately the terms coming from the drift and the martingale) and the Burkholder-Davis-Gundy inequality (to control the martingale term). Note that the bound of the drift term is of order $|t-s|^{2p}$ while the bound of the martingale term is only of order $|t-s|^p$.\\
 
On the other hand, the increments in $\gl$ require some work: fix $p \geq 2$ and let us start with $\theta^{(E)}_\gl$. Since the coefficients of the SDE are Lipschitz in $\theta_\gl^{(E)}$, we deduce that there exists $C=C(p)>0$ such that for all $\mu \le \lambda$ and for all $t\in [0,1]$ we have
$$ \E[ |\theta^{(E)}_\lambda(t) - \theta^{(E)}_\mu(t)|^{p}] \le C\Big( |\lambda-\mu|^{p} + \int_0^t \E[|\theta^{(E)}_\lambda(s) - \theta^{(E)}_\mu(s)|^{p}] ds + \E[|M_t|^{p}]\Big)\;,$$
where
$$ M_t := \frac{\sqrt{L/E}}{2} \int_0^t \Big(\cos(2 \theta^{(E)}_\lambda(s)+2E's)-\cos(2 \theta^{(E)}_\mu(s)+2E's)\Big) dB^{(L)}(s)\;.$$
Combining the Burkholder-Davis-Gundy inequality and the Jensen inequality, there exists a constant $C'>0$ such that for all $t\in [0,1]$,
$$ \E[|M_t|^{p}] \le C' \int_0^t \E\Big[ |\theta^{(E)}_\lambda(s) - \theta^{(E)}_\mu(s)|^{p} \Big] ds\;.$$
The desired bound on $\E[ |\theta^{(E)}_\lambda(t) - \theta^{(E)}_\mu(t)|^{p}]$ then follows from Gr\"onwall's lemma.\\

We turn to $\ln r^{(E)}_\gl$. The strategy is the same, the only difference is that the coefficients of the SDE do not depend on $\ln r^{(E)}_\mu$ but on $\theta^{(E)}_\mu$. Since we already established bounds on the increments of the latter, one can easily conclude.

\subsection{Control of the error terms}\label{Subsec:Error}

Before we identify the limit of any converging subsequence, let us control the error terms appearing in the SDEs \eqref{Eq:SDEthetar}.
\begin{lemma}\label{lem:Errors}
For any $\gl\in\R$, the following convergences hold in probability as $L\to\infty$
\begin{equation*}\begin{split}
\sup_{0\le t \le 1} \Big| \int_0^t \cE(\theta^{(E)}_\gl)(s) ds\Big| \to 0\;,\qquad\sup_{0\le t \le 1} \Big| \int_0^t \cE(\ln r^{(E)}_\gl)(s) ds\Big| \to 0\;.
\end{split}
\end{equation*}
\end{lemma}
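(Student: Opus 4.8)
The plan is to treat the time‑integrals of the error terms as oscillatory integrals and run a ``Riemann--Lebesgue lemma for semimartingales''. First I would record that the coefficients occurring in the expressions for $\cE(\theta^{(E)}_\gl)$ and $\cE(\ln r^{(E)}_\gl)$, namely $\tfrac{2\ell_E-\gl}{2}$, $\tfrac{L/E}{4}$ and $\tfrac{L/E}{8}$, are bounded uniformly in $L$ (indeed $\ell_E=\{L\sqrt E\}_\pi\in[0,\pi)$, $\gl$ is fixed, and $L/E\to\tau$). Hence both $\cE(\theta^{(E)}_\gl)$ and $\cE(\ln r^{(E)}_\gl)$ are finite linear combinations, with uniformly bounded coefficients, of the functions $\cos(k\theta^{(E)}_\gl(s)+kE's)$ and $\sin(k\theta^{(E)}_\gl(s)+kE's)$ with $k\in\{2,4\}$. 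Taking real and imaginary parts, it therefore suffices to prove that for $k\in\{2,4\}$,
\begin{equation*}
\sup_{t\in[0,1]}\Big|\int_0^t e^{\,ik(\theta^{(E)}_\gl(s)+E's)}\,ds\Big|\xrightarrow[L\to\infty]{}0\qquad\text{in probability.}
\end{equation*}

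To establish this I would integrate by parts using It\^o's formula. Writing $\phi_k(s):=k\theta^{(E)}_\gl(s)+kE's$ and using $d\phi_k=k\,d\theta^{(E)}_\gl+kE'\,ds$, It\^o applied to $s\mapsto e^{i\phi_k(s)}$ gives, after isolating the $ds$ term and using $\phi_k(0)=0$,
\begin{equation*}
ikE'\int_0^t e^{i\phi_k(s)}\,ds=e^{i\phi_k(t)}-1-ik\int_0^t e^{i\phi_k(s)}\,d\theta^{(E)}_\gl(s)+\frac{k^2}{2}\int_0^t e^{i\phi_k(s)}\,d\langle\theta^{(E)}_\gl\rangle_s .
\end{equation*}
Since $E'=L\sqrt E-\ell_E\to\infty$, it is enough to show that the right‑hand side is $O_\P(1)$ uniformly in $t\in[0,1]$. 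The first term is bounded by $2$ deterministically. For the other two I would use the uniform bound $d\langle\theta^{(E)}_\gl\rangle_s\le C\,ds$ with $C$ independent of $L$, read off from \eqref{Eq:SDEthetar}: the diffusion coefficient of $\theta^{(E)}_\gl$ is a bounded combination of $\sqrt{L/E}$ and the quadratic‑variation densities of $B^{(L)},W^{(L)}_1,W^{(L)}_2$, all of which remain bounded because $L/E\to\tau$. Hence $\langle\theta^{(E)}_\gl\rangle_t\le C$ a.s., so the last integral is $\le C$ uniformly in $t$. For the stochastic integral against $d\theta^{(E)}_\gl$ I would split $d\theta^{(E)}_\gl$ into its finite‑variation part and its martingale part $dM^{(E)}_\gl$: the finite‑variation part has total variation on $[0,1]$ bounded a.s.\ by $\int_0^1(\tfrac{|\gl|}{2}+|\cE(\theta^{(E)}_\gl)(s)|)\,ds\le C$ (this uses only the trivial pointwise bound $|\cE(\theta^{(E)}_\gl)|\le C$, so there is no circularity), while for the martingale part Doob's $L^2$ inequality gives $\E[\sup_{t\le1}|\int_0^t e^{i\phi_k}\,dM^{(E)}_\gl|^2]\le 4\,\E[\langle M^{(E)}_\gl\rangle_1]\le C$. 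Altogether $\sup_{t\le1}$ of the displayed right‑hand side is $O_\P(1)$, and dividing by $kE'\to\infty$ yields the convergence.

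The argument for $\cE(\ln r^{(E)}_\gl)$ is identical since it is a bounded linear combination of the very same oscillatory integrands. The only point requiring care — and the one I expect to be the main technical obstacle — is the uniform‑in‑$L$ control of the quadratic variation (equivalently, the diffusion coefficient) of $\theta^{(E)}_\gl$: the processes $W^{(L)}_1,W^{(L)}_2$ are built from the single Brownian motion $B^{(L)}$ with rapidly oscillating integrands, so $\theta^{(E)}_\gl$ is genuinely driven by one noise, and one must check that all the relevant covariation densities stay bounded as $L\to\infty$. Once this boundedness is in hand, the integration‑by‑parts estimate above is routine.
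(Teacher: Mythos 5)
Your proof is correct and follows essentially the same route as the paper's: an It\^o integration by parts that extracts a factor $1/E'$ from the fast oscillation, deterministic bounds on the drift and quadratic-variation remainders, and a martingale maximal inequality (Doob/BDG) for the stochastic-integral remainder. The only cosmetic difference is that you apply It\^o to the full complex exponential $e^{ik(\theta^{(E)}_\gl+E'\cdot)}$, whereas the paper first splits the trigonometric products and integrates by parts against the deterministic factor $\cos(E's)$ or $\sin(E's)$; both yield the same estimates.
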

\begin{proof}
Given the terms that appear in $\cE$, it suffices to show that for any functions $f,g$ of the form $\cos(a \cdot), \sin(a \cdot)$, with $a\in \{1,2,4\}$, we have the following convergence in probability as $L\to\infty$
$$ \sup_{0\le t \le 1} \Big| \int_0^t f(\theta^{(E)}_\gl(s)) g(E's) ds \Big| \to 0\;.$$
This is not a direct consequence of the Riemann-Lebesgue Lemma since $\theta^{(E)}_\gl$ \emph{depends} on $L$. Without loss of generality, we can take $f=\sin(\cdot)$ and $g=\cos(\cdot)$. By It\^o's formula we find
\begin{align*}
\int_0^t \sin(\theta^{(E)}_\gl(s)) \cos(E's) ds &= \frac1{E'}\sin(\theta^{(E)}_\gl(t)) \sin(E't)\\
&- \frac1{E'} \int_0^t \sin(E's) \Big(\cos(\theta^{(E)}_\gl(s)) d\theta^{(E)}_\gl(s) - \frac12 \sin(\theta^{(E)}_\gl(s)) d\langle \theta^{(E)}_\gl\rangle_s\Big)
\end{align*}
Recall that $E'\to\infty$ as $L\to\infty$. Obviously, the first term on the r.h.s.~goes to $0$ uniformly over $t\in [0,1]$. Regarding the second term, it can be split into martingale and non-martingale terms. The non-martingale terms go to $0$ in probability uniformly over $t\in [0,1]$ since all the terms appearing inside the integral are uniformly bounded by some deterministic constant. The martingale term is given by
$$ N_t := -\frac1{E'} \int_0^t \sin(E's) \cos(\theta^{(E)}_\gl(s)) \frac{\sqrt{L/E}}{2\sqrt 2} \Re(e^{2i \theta^{(E)}_\gl} dW^{(L)}(s))\;.$$
The Burkholder-Davis-Gundy inequality ensures that there exists a constant $C>0$ such that
$$ \E[\sup_{0\le t \le 1} N_t^2] \le C \E[\langle N\rangle_1]\;.$$
Since the r.h.s.~is of order $(1/E')^2$, we deduce that $\sup_{0\le t \le 1} |N_t|$ goes to $0$ in probability, as required.
\end{proof}

\subsection{Identification of the limit}\label{Subsec:IdLim}

Fix $\gl_1, \ldots, \gl_n \in \R$. We will identify the law of any converging subsequence of $(\theta_{\gl_i}^{(E)},\ln r_{\gl_i}^{(E)})_{1\le i \le n}$ through the following standard martingale problem, whose proof can be found in~\cite[Prop 5.4.6]{Karatzas}

\begin{proposition}
Let $(\theta_{\gl_i},\ln r_{\gl_i})_{1\le i \le n}$ be a continuous process on $[0,1]$ and let $\cF$ be the associated filtration. Assume that
\begin{equation*}\begin{split}
M_{\gl_i}(t) = \theta_{\gl_i}(t) - \frac{\gl_i}{2} t\;,\quad N_{\mu_i}(t) = \ln r_{\mu_i}(t) - \frac{\tau}{8}t\;.
\end{split}\end{equation*}
together with
\begin{equation}\label{Eq:Brackets2}\begin{split}
&M_{\gl_i}(t)M_{\gl_j}(t) - \frac{\tau}{4} t - \frac{\tau}{8} \int_0^t \cos(2\theta_{\gl_i} - 2\theta_{\gl_j}) ds\;,\\
&N_{\gl_i}(t)N_{\gl_j}(t) - \frac{\tau}{8} \int_0^t \cos(2\theta_{\gl_i} - 2\theta_{\gl_j}) ds\;,\\
&M_{\gl_i}(t)N_{\gl_j}(t) - \frac{\tau}{8} \int_0^t \sin(2\theta_{\gl_j} - 2\theta_{\gl_i}) ds\;,
\end{split}\end{equation}
are $\cF$-martingales. Then $(\theta_{\gl_i},\ln r_{\gl_i})_{1\le i \le n}$ coincides in law with the unique solution of the SDEs \eqref{Eq:Theta} associated to the parameters $\gl_1,\ldots,\gl_n$.
\end{proposition}

Recall the SDEs \eqref{Eq:SDEthetar} and define the martingales
\begin{equation}\label{Eq:Mgale}\begin{split}
M_{\gl_i}^{(E)}(t) &= \theta^{(E)}_{\gl_i}(t) - \frac{\gl_i}{2} t - \int_0^t \cE(\theta^{(E)}_{\gl_i})(s) ds\;,\\
N_{\gl_i}^{(E)}(t) &= \ln r^{(E)}_{\gl_i}(t) - \frac{L/E}{8} t - \int_0^t \cE(\ln r^{(E)}_{\gl_i})(s) ds\;.
\end{split}\end{equation}
From the moment bounds established for tightness, we easily deduce that all moments of these martingales are bounded uniformly over $L> 1$.\\

Let $(\theta_{\gl_i},\ln r_{\gl_i})_{1\le i \le n}$ be the limit of a converging subsequence of $(\theta_{\gl_i}^{(E)},\ln r_{\gl_i}^{(E)})_{1\le i \le n}$: for simplicity we keep the same notation for the subsequence. We naturally define
\begin{equation}\label{Eq:Mgale2}\begin{split}
M_{\gl_i}(t) &= \theta_{\gl_i}(t) - \frac{\gl_i}{2} t\;,\\
N_{\gl_i}(t) &= \ln r_{\gl_i}(t) - \frac{\tau}{8}t\;.
\end{split}\end{equation}
Thanks to Lemma \ref{lem:Errors}, we can pass to the limit on \eqref{Eq:Mgale} and we obtain \eqref{Eq:Mgale2}. Given the aforementioned moment bounds, we also deduce that $M_{\gl_i}, N_{\gl_i}$ are martingales (in the natural filtration associated to the processes at stake). We now identify their brackets.\\

Again from \eqref{Eq:SDEthetar} we see that the processes
\begin{equation}\label{Eq:Brackets}\begin{split}
&M_{\gl_i}^{(E)}(t)M_{\gl_j}^{(E)}(t) - \frac{L/E}{4} \int_0^t (-1+ \cos(2\theta^{(E)}_{\gl_i}(s)+2E's)) (-1+ \cos(2\theta^{(E)}_{\gl_j}(s)+2E' s))  ds\;,\\
&N_{\gl_i}^{(E)}(t)N_{\gl_j}^{(E)}(t) - \frac{L/E}{4} \int_0^t \sin(2\theta^{(E)}_{\gl_i}(s)+2E' s) \sin(2\theta^{(E)}_{\gl_j}(s)+2E' s) ds\;,\\
&M_{\gl_i}^{(E)}(t)N_{\gl_j}^{(E)}(t) - \frac{L/E}{4} \int_0^t (-1+ \cos(2\theta^{(E)}_{\gl_i}(s)+2E' s)) \sin(2\theta^{(E)}_{\gl_j}(s)+2E' s) ds\;,
\end{split}\end{equation}
are martingales. We aim at passing to the limit on \eqref{Eq:Brackets}. We can compute the limits of the three integrals therein: by expanding the $\cos$ and $\sin$ functions, the oscillating terms in $E'$ will vanish thanks to the Riemann-Lebesgue-type argument of the previous subsection, and the remaining terms match with the ones in the integrals that appear in \eqref{Eq:Brackets2}. Combining this with the aforementioned moment bounds, we deduce that the processes of \eqref{Eq:Brackets2} are also martingales. We can then apply the martingale problem recalled above and this completes the proof of Proposition \ref{Prop:CVSDEs}.

\subsection{Proof of Proposition \ref{Prop:CVyv}}
The proof is very close to the proof of Proposition \ref{Prop:CVSDEs}. The main difference is that we consider solutions of \eqref{Eq:yEz} with a non-real parameter $z$ so that the polar representation used previously does not hold anymore: hence we work directly at the level of the SDEs \eqref{Eq:yEz} to prove the convergences. Note that we prove the convergence for a fixed $z$, although the arguments could be adapted to get the local uniform convergence of the family.\\

The SDE \eqref{Eq:yEz} solved by $y^{(E)}_z$ can be written in the following way:
\begin{align*}
dy^{(E)}_z(t) &= \Big(\frac{z}{2} J^{-1} dt - \frac{\sqrt{L/E}}{2}J^{-1}\begin{pmatrix} d{B}^{(L)}(t) + \frac1{\sqrt{2}}dW_1^{(L)}(t) & \frac1{\sqrt{2}}dW_2^{(L)}(t) \\ \frac1{\sqrt{2}}dW_2^{(L)}(t) & dB^{(L)}(t) - \frac1{\sqrt{2}}dW_1^{(L)}(t) \end{pmatrix}\Big) y^{(E)}_z(t)\\
&\qquad+ \cE(t) y^{(E)}_z(t) dt\;,
\end{align*}
where
$$ \cE(t) := \frac{z-2\ell_E}2 J^{-1}(2R_{E'}-I)\;,\quad  J^{-1}(2R_{E'} - I) = \begin{pmatrix} \sin 2E't  & -\cos 2E't \\ -\cos 2E't & -\sin 2E't \end{pmatrix}\;.$$

The putative limit satisfies
$$dy_z(t) = \Big(\frac{z}{2} J^{-1} dt - \frac{\sqrt{\tau}}{2}J^{-1}\begin{pmatrix} d\cB(t) + \frac1{\sqrt{2}} d\cW_1(t) & \frac1{\sqrt{2}}d\cW_2(t) \\ \frac1{\sqrt{2}}d\cW_2(t) & d\cB(t) - \frac1{\sqrt{2}} d\cW_1(t) \end{pmatrix}\Big) y_z(t)\;.$$

The processes $y^{(E)}_z$ and $v^{(E)}_z$, resp.~$y_z$ and $v_z$, satisfy the same equations, the only difference lies in the initial conditions:
$$ y^{(E)}_z(0) = y_z(0) = \begin{pmatrix} 0\\ 1 \end{pmatrix}\;,\quad v^{(E)}_z(0) = v_z(0) = \begin{pmatrix} 1\\ 0 \end{pmatrix}\;.$$
Consequently, the proof of the tightness relies on exactly the same arguments for $y^{(E)}_z$ and $v^{(E)}_z$, and we restrict ourselves to presenting the details for the former. We start with some a priori bounds.

\begin{lemma}\label{Lemma:Moments}
For any $p \geq 1$, there exists $C>0$ such that
$$\sup_{L>1} \sup_{t\in [0,1]}\E[|y^{(E)}_z(t)|^p] < \infty\;,\quad \sup_{t\in [0,1]}\E[|y_z(t)|^p] < \infty\;.$$
\end{lemma}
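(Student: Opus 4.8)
The plan is to treat the two processes in exactly the same way, using the linear SDEs recalled just above the statement. Write the equation for $y_z^{(E)}$ as $dy^{(E)}_z(t) = A^{(E)}(t) y^{(E)}_z(t)\,dt + \sum_{k} B_k^{(E)}(t) y^{(E)}_z(t)\,d\xi_k^{(L)}(t)$, where the $\xi_k^{(L)}$ range over $B^{(L)}, W_1^{(L)}, W_2^{(L)}$, the drift matrix $A^{(E)}(t) = \tfrac{z}{2}J^{-1} + \mathcal{E}(t)$ collects the $J^{-1}$-term together with the error term, and each $B_k^{(E)}(t)$ is a (deterministic, bounded) $2\times 2$ matrix built from $-\tfrac{\sqrt{L/E}}{2}J^{-1}$ and the noise-matrix pattern. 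The key observation is that, since $E \sim L/\tau$, the prefactor $\sqrt{L/E}$ stays bounded (it converges to $\sqrt\tau$), $\ell_E$ is bounded by $\pi$, and the entries of $R_{E'}$ are bounded by $1$; hence there is a deterministic constant $K = K(z,\tau)$, \emph{uniform in $L>1$ and $t\in[0,1]$}, with $\|A^{(E)}(t)\| \le K$ and $\|B_k^{(E)}(t)\| \le K$ for all $k$. The same bound (with $A(t) = \tfrac{z}{2}J^{-1}$, coefficients $-\tfrac{\sqrt\tau}{2}J^{-1}(\cdots)$) holds for the limiting equation, so a single argument covers both cases.

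First I would fix $p \ge 2$ (the case $p\in[1,2)$ follows by Jensen from $p=2$) and set $\phi(t) := \E[\,\sup_{0\le s\le t}|y^{(E)}_z(s)|^p\,]$. Applying the triangle inequality in $L^p$ to the integral form $y^{(E)}_z(t) = y^{(E)}_z(0) + \int_0^t A^{(E)} y^{(E)}_z\,ds + \sum_k \int_0^t B_k^{(E)} y^{(E)}_z\,d\xi_k^{(L)}$, then Jensen for the drift term and the Burkholder--Davis--Gundy inequality for each martingale term, and using $\|A^{(E)}\|,\|B_k^{(E)}\|\le K$, I get
\begin{align*}
\phi(t) \;\le\; C_p\Big( |y^{(E)}_z(0)|^p + K^p \int_0^t \E\big[\sup_{0\le r \le s}|y^{(E)}_z(r)|^p\big]\,ds \Big) \;=\; C_p\big( 1 + K^p \textstyle\int_0^t \phi(s)\,ds\big),
\end{align*}
where the bracket of $\int_0^{\cdot} B_k^{(E)} y_z^{(E)} d\xi_k^{(L)}$ is controlled by $K^2\int_0^t |y^{(E)}_z|^2\,ds$ and then Jensen raises it to the $p/2$-th power against Lebesgue measure on $[0,1]$. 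To make this rigorous one should first argue that $\phi(t)<\infty$, e.g. by a localization with stopping times $T_N = \inf\{t : |y^{(E)}_z(t)| \ge N\}$, derive the inequality for $\phi(t\wedge T_N)$ with constants independent of $N$, apply Gr\"onwall to get $\phi(t\wedge T_N) \le C_p e^{C_p K^p t}$, and let $N\to\infty$ by monotone convergence. Gr\"onwall's lemma then yields $\phi(1) \le C_p\, e^{C_p K^p}$, a bound that does not depend on $L$, giving the first assertion; the identical computation with the limiting coefficients gives the second.

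I do not expect a genuine obstacle here: the statement is a routine a priori estimate for a linear SDE with uniformly bounded (in $L$) coefficients, and the only point that requires a word of care is the initial finiteness of the moments, handled by the localization argument above. The one thing worth emphasizing in the write-up is \emph{why} the coefficients are uniformly bounded in $L$ — namely $E\sim L/\tau$ forces $\sqrt{L/E}\to\sqrt\tau$, while $\ell_E\in[0,\pi)$ and $\|R_{E'}\|_\infty\le 1$ — since this uniformity is exactly what makes the Gr\"onwall constant independent of $L$ and hence what the rest of Section~\ref{Sec:SDEs} relies on.
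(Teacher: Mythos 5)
Your argument is correct and follows essentially the same route as the paper: integral form of the linear SDE, Burkholder--Davis--Gundy plus Jensen to reduce to a Gr\"onwall inequality for the $p$-th moment, with the uniformity in $L$ coming from the boundedness of $\sqrt{L/E}$, $\ell_E$ and $R_{E'}$. Your additional remarks (the localization via stopping times to justify a priori finiteness, and taking the supremum inside the expectation) are harmless refinements of the same proof.
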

\begin{proof}
Fix $p \geq 2$. From the integral form of the SDE above, applying successively the Burkholder-Davis-Gundy inequality and the Jensen inequality we get the existence of some deterministic constants $C,C'>0$ (depending on $z$ and $p$) such that uniformly over all $L$ and $t$:
\begin{align*}
\E[|y^{(E)}_z(t)|^{p}] &\le 1 + C \Big(\int_0^t \E[|y^{(E)}_z(s)|^{p}] ds + \E\big[\big(\int_0^t |y^{(E)}_z(s)|^2 ds\big)^{p/2}\big]\Big)\\
&\le 1 + C' \int_0^t \E[|y^{(E)}_z(s)|^{p}] ds \;.
\end{align*}
Gr\"onwall's Lemma then yields the desired bound. The proof is the same for $y_z$.
\end{proof}

We now control the oscillations thanks to the following lemma, which is also used in Subsection \ref{subsec:StrongResolventCV} for the proof of the strong resolvent convergence.
\begin{lemma}\label{Lemma:RL}
Let $h:[0,1]\to\R$ be a smooth function and let $f$ be either $\sin(2E'\cdot)$ or $\cos(2E'\cdot)$. Then for any $i\in\{1,2\}$ we have as $L\to\infty$
$$ \E\Big[\sup_{0\le t \le 1} \Big| \int_0^t f(s) (y^{(E)}_z)_i(s) h(s) ds \Big|\Big] \to 0\;.$$
\end{lemma}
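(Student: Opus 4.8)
The statement to prove is: for $h$ smooth on $[0,1]$, $f$ one of $\sin(2E'\cdot)$ or $\cos(2E'\cdot)$, and $i\in\{1,2\}$,
$$
\E\Big[\sup_{0\le t \le 1}\Big|\int_0^t f(s)\,(y^{(E)}_z)_i(s)\,h(s)\,ds\Big|\Big]\to 0\quad\text{as }L\to\infty.
$$
The obstruction to applying the Riemann–Lebesgue lemma directly is that the slowly varying factor $(y^{(E)}_z)_i\,h$ is itself $L$-dependent (and random), so one cannot simply invoke a deterministic oscillation estimate. The plan is to integrate the oscillating factor $f$ explicitly, using that it has a bounded antiderivative of size $O(1/E')$, and to move the $d$-symbol onto the remaining factor via integration by parts (here, Itô's formula), thereby reducing to terms that are each $O(1/E')$ after taking expectations.

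\emph{Step 1: integrate $f$ by parts.} Let $F(s):=\int_0^s f(u)\,du$, so $|F|_\infty \le 1/E'$. Applying the integration-by-parts / Itô product formula to the product $F(s)\,(y^{(E)}_z)_i(s)\,h(s)$, one gets
$$
\int_0^t f(s)(y^{(E)}_z)_i(s)h(s)\,ds = F(t)(y^{(E)}_z)_i(t)h(t) - \int_0^t F(s)\,d\big((y^{(E)}_z)_i h\big)(s).
$$
The boundary term is bounded in sup-norm by $\tfrac1{E'}\|h\|_\infty \sup_{t}|y^{(E)}_z(t)|$, whose expectation is $O(1/E')$ by Lemma \ref{Lemma:Moments}. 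For the integral term, one expands $d((y^{(E)}_z)_i h) = h\,d(y^{(E)}_z)_i + (y^{(E)}_z)_i h'\,ds$ using the explicit SDE \eqref{Eq:yEz} for $y^{(E)}_z$: the drift part, the $\cE(t)y^{(E)}_z$ part, and the $h'$ part are all finite-variation terms whose integrands are bounded (in $L$) times $|y^{(E)}_z|$ or $|y^{(E)}_z|\times$(constant), so after multiplying by $F$ (size $\le 1/E'$) and taking $\sup_t$ then $\E$, each contributes $O(1/E')$, again invoking Lemma \ref{Lemma:Moments} and the boundedness of the coefficients of \eqref{Eq:yEz} uniformly in $L$.

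\emph{Step 2: the martingale term.} The only genuinely stochastic piece is
$$
\mathcal{M}_t := \int_0^t F(s)\,h(s)\,\big(d(y^{(E)}_z)_i\big)^{\mathrm{mart}}(s),
$$
where $(d y^{(E)}_z)^{\mathrm{mart}}$ is the martingale part of \eqref{Eq:yEz}, i.e.\ the $\sqrt{L/E}$ term involving $dB^{(L)},dW_1^{(L)},dW_2^{(L)}$, whose matrix coefficients are bounded by $1$. By the Burkholder–Davis–Gundy inequality,
$$
\E\Big[\sup_{0\le t\le 1}|\mathcal{M}_t|\Big] \le C\,\E\big[\langle\mathcal M\rangle_1^{1/2}\big] \le C\,\|h\|_\infty\,\frac{1}{E'}\,\Big(\sup_{s\le 1}\tfrac{L}{E}\,\E[|y^{(E)}_z(s)|^2]\Big)^{1/2},
$$
which is $O(1/E')$ since $L/E\to\tau$ and the second moments of $y^{(E)}_z$ are uniformly bounded (Lemma \ref{Lemma:Moments}). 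Summing the contributions of Steps 1 and 2 and using $E'=L\sqrt E - \ell_E\to\infty$ gives the claim; the same argument applies to $v^{(E)}_z$ and to $z$ real (the $\theta^{(E)}_\gl$-based version used in Subsection \ref{subsec:StrongResolventCV} and in Lemma \ref{lem:Errors}). The main (mild) obstacle is purely bookkeeping: carefully enumerating the drift/noise/bracket pieces of \eqref{Eq:yEz} and checking each coefficient is $L$-uniformly bounded so that every term indeed scales like $1/E'$.
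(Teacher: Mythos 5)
Your proposal is correct and follows essentially the same route as the paper: the paper likewise integrates the oscillating factor by parts via It\^o's formula (producing the antiderivative $\sin(2E'\cdot)/(2E')$), bounds the boundary and finite-variation terms by $O(1/E')$ using the moment bounds of Lemma \ref{Lemma:Moments}, and controls the remaining martingale term with the Burkholder--Davis--Gundy inequality. The only cosmetic differences are that you phrase the integration by parts with a generic antiderivative $F$ and apply BDG in $L^1$ rather than $L^2$, neither of which changes the argument.
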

\begin{proof}
The arguments are essentially the same as those of the proof of Lemma \ref{lem:Errors}. Take $f=\cos(2E'\cdot)$ and $i=1$ without loss of generality. By It\^o's formula, we have
\begin{align*}
\int_0^t \cos(2E' s) (y^{(E)}_z)_1(s) h(s) ds &= \frac1{2E'} (y^{(E)}_z)_1(t) h(t) \sin(2E' t)\\
&-\frac1{2E'} \int_0^t \sin(2E's) \Big(h'(s)(y^{(E)}_z)_1(s) + h(s) d(y^{(E)}_z)_1(s) \Big) ds\;.
\end{align*}
Since $E'\to\infty$ and given the bounds of Lemma \ref{Lemma:Moments} it is easy to check that the expectation of all terms goes to $0$, except for the martingale term produced by $d(y^{(E)}_z)_1(s)$ which requires some additional work. This martingale term is given by
$$ N_t := \frac{\sqrt{L/E}}{4E'} \int_0^t \sin(2E's) h(s) \Big(  \frac1{\sqrt 2} (y^{(E)}_z(s))_1 dW_2^{(L)}(s) +  (y^{(E)}_z(s))_2 (dB^{(L)}(s) - \frac1{\sqrt 2} dW_1^{(L)}(s)) \Big) ds\;.$$
By the Burkholder-Davis-Gundy inequality there exists $C>0$ such that
$$ \E\big[ \sup_{0\le t \le 1} N_t^2\big] \le \frac{C}{(E')^2} \E\Big[\int_0^t h(s)^2 |y^{(E)}_z(s)|^2 ds\Big]\;,$$
which, in view of Lemma \ref{Lemma:Moments}, goes to $0$ as $L\to\infty$.
\end{proof}

As a consequence of Lemma \ref{Lemma:RL}, we deduce that $\sup_{0\le t \le 1} \Big|\int_0^t \cE(s) y_z^{(E)}(s) ds\Big|$ goes to $0$ in probability as $L\to\infty$.\\

Let us now prove tightness of $(y^{(E)}_z)_{L>1}$ (once again, the arguments are exactly the same for $(v^{(E)}_z)_{L>1}$). Fix $p\ge 1$. By the triangle and the Burkholder-Davis-Gundy inequalities at the first line, the H\"older inequality at the second line and Lemma \ref{Lemma:Moments} at the third line, there exist some constants $C,C'>0$ such that for all $L>1$ and all $0 \le s \le t \le 1$
\begin{align*}
\E[|y^{(E)}_z(t) - y^{(E)}_z(s)|^{2p}] &\le C \Big( \E\Big[\Big(\int_s^t |y^{(E)}_z(r)| dr\Big)^{2p}\Big] + \E\Big[\Big(\int_s^t |y^{(E)}_z(r)|^2 dr\Big)^p\Big] \Big)\\
&\le C  \int_s^t \E\Big[|y^{(E)}_z(r)|^{2p}\Big] dr \times \Big((t-s)^{2p-1} + (t-s)^{p-1}\Big)  \\
&\le C' |t-s|^{p}\;.
\end{align*}
By Kolmogorov-Centsov's Theorem~\cite[Th 2.23 \& Th 14.9]{Kallenberg}, we deduce that there exists a constant $\beta > 0$ such that for any $p\ge 1$
$$ \sup_{L>1}\E\bigg[ \sup_{0\le s \le t \le 1} \Big(\frac{|y^{(E)}_z(t) - y^{(E)}_z(s)|}{|t-s|^\beta}\Big)^p\bigg] < \infty\;,$$
and tightness follows.\\

The identification of the limit of any converging subsequence of $(y^{(E)}_{z},v^{(E)}_{z})_{L>1}$ can be carried out with a martingale problem as in Subsection \ref{Subsec:IdLim} and with the help of Lemma \ref{Lemma:RL}: the arguments being virtually the same, we do not provide the details.

\section{Top of the spectrum}\label{sec:top}

In this section, we assume that $E = E(L)\gg L$ and we explain how the previous arguments can be adapted to establish Theorem \ref{Th:Top}. First of all, since the limiting objects appearing in that statement are all deterministic the asserted convergences in probability are granted provided that convergence in law holds, and this is what we are going to prove.\\

Let $\cD(\mathtt{F})$ be the closure in $H^1((0,1),\R^2)$ of all smooth functions $f:[0,1]\to\R^2$ such that $f_1(0) =f_1(1) = 0$. One can check that the operator $\mathtt{F}$ on $\cD(\mathtt{F})$ is self-adjoint. Following the same steps as in Section \ref{sec:limitingoperator} we see that the SDEs associated to the operator $\mathtt{F}$ are trivial:
\begin{align*}
&dy_z(t)  = -\frac{z}{2} J y_z(t) dt\;,\quad t \in [0,1]\;,\quad z \in \C\;,\quad y_z(0) = \begin{pmatrix} 0 \\ 1\end{pmatrix}\;,
\end{align*}
and for $\gl\in\R$
\begin{align*}
&d \Theta_{\gl}(t) = \frac{\gl}{2} dt \;, \quad\Theta_{\gl}(0) = 0\;,\\
&d \ln \Gamma_{\gl}(t) = 0 dt\;,\quad \Gamma_{\gl}(0) = 1\;.
\end{align*}
Their solutions are given by $y_\gl(t) = (\sin (\lambda t/2),\cos(\lambda t/2))$, $\Theta_\gl(t) = \lambda t/2$ and $\Gamma_\gl(t) = 1$.\\

The proofs of the convergences are \emph{exactly} the same as those presented in Sections \ref{sec:CVoperator} and \ref{Sec:SDEs}: the only difference is that many terms, that had non-trivial contributions in the limit in the regime $E\sim L/\tau$, now vanish in the limit since $L/E \to 0$.

\subsection*{Acknowledgements} The work of CL is supported by the project SINGULAR ANR-16-CE40-0020-01. The authors thank Gaultier Lambert for his useful comments on the first version of this paper.

\bibliographystyle{Martin}
\bibliography{library}

\begin{thebibliography}{{Rom}14}
\expandafter\ifx\csname url\endcsname\relax
  \def\url#1{\texttt{#1}}\fi
\expandafter\ifx\csname urlprefix\endcsname\relax\def\urlprefix{URL }\fi
\expandafter\ifx\csname href\endcsname\relax
  \def\href#1#2{#2}\fi
\expandafter\ifx\csname burlalt\endcsname\relax
  \def\burlalt#1#2{\href{#2}{\texttt{#1}}}\fi

\bibitem[AD14]{AllezDumazSine}
\textsc{R.~Allez} and \textsc{L.~Dumaz}.
\newblock From sine kernel to {P}oisson statistics.
\newblock \emph{Electron. J. Probab.} \textbf{19}, (2014), no. 114, 25.
\newblock
  \burlalt{doi:10.1214/EJP.v19-3742}{http://dx.doi.org/10.1214/EJP.v19-3742}.

\bibitem[dB68]{DeBranges}
\textsc{L.~de~Branges}.
\newblock \emph{Hilbert spaces of entire functions}.
\newblock Prentice-Hall, Inc., Englewood Cliffs, N.J., 1968.

\bibitem[DL20]{DL1}
\textsc{L.~Dumaz} and \textsc{C.~Labb\'{e}}.
\newblock Localization of the continuous {A}nderson {H}amiltonian in 1-{D}.
\newblock \emph{Probab. Theory Related Fields} \textbf{176}, no. 1-2, (2020),
  353--419.
\newblock
  \burlalt{doi:10.1007/s00440-019-00920-6}{http://dx.doi.org/10.1007/s00440-019-00920-6}.

\bibitem[DL21]{DL3}
\textsc{L.~{Dumaz}} and \textsc{C.~{Labb{\'e}}}.
\newblock {Localization crossover for the continuous Anderson Hamiltonian in
  $1$-d}.
\newblock \emph{arXiv e-prints}  arXiv:2102.09316.
\newblock \burlalt{arXiv:2102.09316}{http://arxiv.org/abs/2102.09316}.

\bibitem[ES07]{EdelmanSutton}
\textsc{A.~Edelman} and \textsc{B.~D. Sutton}.
\newblock From random matrices to stochastic operators.
\newblock \emph{J. Stat. Phys.} \textbf{127}, (2007), 1121–1165.
\newblock
  \burlalt{doi:10.1007/s10955-006-9226-41}{http://dx.doi.org/10.1007/s10955-006-9226-41}.

\bibitem[FN77]{Fukushima}
\textsc{M.~Fukushima} and \textsc{S.~Nakao}.
\newblock On spectra of the {S}chr\"odinger operator with a white {G}aussian
  noise potential.
\newblock \emph{Z. Wahrscheinlichkeitstheorie und Verw. Gebiete} \textbf{37},
  no.~3, (1976/77), 267--274.

\bibitem[Kal02]{Kallenberg}
\textsc{O.~Kallenberg}.
\newblock \emph{Foundations of modern probability}.
\newblock Probability and its Applications (New York). Springer-Verlag, New
  York, second ed., 2002.

\bibitem[KS91]{Karatzas}
\textsc{I.~Karatzas} and \textsc{S.~E. Shreve}.
\newblock \emph{Brownian motion and stochastic calculus}, vol. 113 of
  \emph{Graduate Texts in Mathematics}.
\newblock Springer-Verlag, New York, second ed., 1991.

\bibitem[KVV12]{KVV}
\textsc{E.~Kritchevski}, \textsc{B.~Valk\'o}, and \textsc{B.~Vir\'ag}.
\newblock The scaling limit of the critical one-dimensional random
  {S}chr\"odinger operator.
\newblock \emph{Comm. Math. Phys.} \textbf{314}, no.~3, (2012), 775--806.
\newblock
  \burlalt{doi:10.1007/s00220-012-1537-5}{http://dx.doi.org/10.1007/s00220-012-1537-5}.

\bibitem[Nak19]{Nakano2019}
\textsc{F.~Nakano}.
\newblock The scaling limit of eigenfunctions for 1d random schr\"odinger
  operator.
\newblock \emph{arXiv e-prints} (2019).
\newblock \burlalt{arXiv:1912.01436}{http://arxiv.org/abs/1912.01436}.

\bibitem[Pro05]{Protter2005}
\textsc{P.~Protter}.
\newblock \emph{Stochastic Integration and Differential Equations}.
\newblock Stochastic Modelling and Applied Probability. Springer Berlin
  Heidelberg, 2005.

\bibitem[Rem02]{Remling2002}
\textsc{C.~Remling}.
\newblock Schr\"{o}dinger operators and de {B}ranges spaces.
\newblock \emph{J. Funct. Anal.} \textbf{196}, no.~2, (2002), 323--394.
\newblock
  \burlalt{doi:10.1016/S0022-1236(02)00007-1}{http://dx.doi.org/10.1016/S0022-1236(02)00007-1}.

\bibitem[Rem18]{Remlingbook}
\textsc{C.~Remling}.
\newblock \emph{Spectral theory of canonical systems}, vol.~70 of \emph{De
  Gruyter Studies in Mathematics}.
\newblock De Gruyter, Berlin, 2018.

\bibitem[{Rom}14]{Romanov}
\textsc{R.~{Romanov}}.
\newblock {Canonical systems and de Branges spaces}.
\newblock \emph{arXiv e-prints}  arXiv:1408.6022.
\newblock \burlalt{arXiv:1408.6022}{http://arxiv.org/abs/1408.6022}.

\bibitem[RV18]{RV}
\textsc{B.~Rifkind} and \textsc{B.~Vir\'{a}g}.
\newblock Eigenvectors of the 1-dimensional critical random {S}chr\"{o}dinger
  operator.
\newblock \emph{Geom. Funct. Anal.} \textbf{28}, no.~5, (2018), 1394--1419.
\newblock
  \burlalt{doi:10.1007/s00039-018-0460-0}{http://dx.doi.org/10.1007/s00039-018-0460-0}.

\bibitem[RY99]{RevuzYor}
\textsc{D.~Revuz} and \textsc{M.~Yor}.
\newblock \emph{Continuous martingales and {B}rownian motion}, vol. 293 of
  \emph{Grundlehren der Mathematischen Wissenschaften [Fundamental Principles
  of Mathematical Sciences]}.
\newblock Springer-Verlag, Berlin, third ed., 1999.

\bibitem[VV14]{VVlongboxes}
\textsc{B.~Valk\'{o}} and \textsc{B.~Vir\'{a}g}.
\newblock Random {S}chr\"{o}dinger operators on long boxes, noise explosion and
  the {GOE}.
\newblock \emph{Trans. Amer. Math. Soc.} \textbf{366}, no.~7, (2014),
  3709--3728.
\newblock
  \burlalt{doi:10.1090/S0002-9947-2014-05974-6}{http://dx.doi.org/10.1090/S0002-9947-2014-05974-6}.

\bibitem[VV17]{VVoperator}
\textsc{B.~Valk\'{o}} and \textsc{B.~Vir\'{a}g}.
\newblock The sine beta operator.
\newblock \emph{Invent. math.} \textbf{209}, (2017), 275–327.
\newblock
  \burlalt{doi:10.1007/s00222-016-0709-x}{http://dx.doi.org/10.1007/s00222-016-0709-x}.

\bibitem[Wei87]{Weidmann}
\textsc{J.~Weidmann}.
\newblock \emph{Spectral theory of ordinary differential operators}, vol. 1258
  of \emph{Lecture Notes in Mathematics}.
\newblock Springer-Verlag, Berlin, 1987.

\bibitem[Wei97]{WeidmannResolvent}
\textsc{J.~Weidmann}.
\newblock Strong operator convergence and spectral theory of ordinary
  differential operators.
\newblock \emph{Univ. Iagel. Acta Math.} , no.~34, (1997), 153--163.

\end{thebibliography}

\end{document}